\date{\today}
\title[Reidemeister torsion and analytic torsion of discs]{Reidemeister torsion and analytic torsion of discs}
\thanks{2000 {\em Mathematics Subject Classification: 57Q10 58J52}.\\
}
\author{T. de Melo, L. Hartmann  and M. Spreafico}
\address{\tt ICMC, Universidade S\~{a}o Paulo, S\~{a}o Carlos, Brazil.}
\email{mauros@icmc.usp.br}
\newtheorem{theo}{Theorem}
\newtheorem{lem}{Lemma}
\newtheorem{corol}{Corollary}
\newtheorem{defi}{Definition}
\newtheorem{prop}{Proposition}
\newtheorem{rem}{Remark}
\renewcommand{\Re}{{\rm Re}}
\renewcommand{\Im}{{\rm Im}}
\newcommand{\Sp}{{\rm Sp}}
\newcommand{\beq}{\begin{equation}}
\newcommand{\eeq}{\end{equation}}
\newcommand{\N}{{\mathds{N}}}
\newcommand{\Z}{{\mathds{Z}}}
\newcommand{\R}{{\mathds{R}}}
\newcommand{\C}{{\mathds{C}}}
\newcommand{\T}{{\mathcal{T}}}
\newcommand\e{{\rm e}}
\newcommand{\A}{{\mathcal{A}}}
\renewcommand{\H}{{\mathcal{H}}}
\renewcommand{\P}{{\mathcal{P}}}
\newcommand{\B}{{\mathcal{B}}}
\renewcommand{\b}{{\partial}}
\newcommand{\Vol}{{\rm Vol}}
\newcommand{\ec}{{\mathsf e}}
\renewcommand{\ge}{{\mathsf g}}
\renewcommand{\det}{{\rm det}}
\newcommand{\D}{{\partial}}
\date{}
\DeclareMathOperator*{\Rz}{Res_0}
\DeclareMathOperator*{\Ru}{Res_1}
\begin{document}


\maketitle

\begin{abstract} We study the  Reidemeister torsion and the analytic torsion of the $m$ dimensional disc in the Euclidean $m$ dimensional space, using the base for the homology defined by Ray and Singer in \cite{RS}. We prove that the  Reidemeister torsion coincides with a power of the volume of the disc. We study the additional terms arising in the analytic torsion due to the boundary, using generalizations of the Cheeger-M\"{u}ller theorem. We use a formula proved by Br\"uning and Ma \cite{BM}, that predicts a new anomaly boundary term beside the known term proportional to  the Euler characteristic of the boundary \cite{Luc}. Some of our results extend to the case of the cone over a sphere, in particular we evaluate directly the analytic torsion for a cone over the circle and over the two sphere. We compare the results obtained in the low dimensional cases. We also consider a different formula for the boundary term given by Dai and Fang \cite{DF}, and we show that the result obtained using this formula is inconsistent with the direct calculation of the analytic torsion.
\end{abstract}

\section{Introduction}
\label{s0}

The Reidemeister (R) torsion is an important topological invariant introduced originally by Reidemeister \cite{Rei}, Franz \cite{Fra} and de Rham \cite{deR} to classify lens spaces. For non acyclic space, the R torsion depends on the homology. However, dealing with Riemannian manifolds, Ray and Singer \cite{RS} introduce a geometric torsion invariant that we call Ray and Singer (RS) torsion. They use the Riemannian structure  to fixing the dependence on the homology of the R torsion. In the same work, in searching for an analytic description of the RS torsion, Ray and Singer also introduced the analytic torsion, that soon became an important geometric invariant on its own, and has been deeply investigated  by various authors (see for example \cite{BZ} and the references therein).
The equivalence between  the RS torsion and the analytic torsion, conjectured by Ray and Singer, was eventually proved by Cheeger \cite{Che1} and M\"{u}ller \cite{Mul}, for closed manifold. Cheeger also discussed the case of manifolds with boundary, showing that in this case an extra term could appear. Much lather, this boundary term was explicitly given by Lott and Rothenberg \cite{LR} and L\"{u}ck \cite{Luc}, for the case of manifolds with a product metric structure near the boundary. Only in 2000, Dai and Fang \cite{DF} gave a formula for the difference of the RS torsion and the analytic torsion on a manifold with boundary without any assumption for the metric near the boundary. In this formula some new terms appear. However, in a recent work of Br\"{u}ning and Ma  \cite{BM} on Ray-Singer metrics on manifolds with boundary, a further formula is given, where a different boundary contribution appears. The results given in Theorem \ref{t2} below are obtained using the formula of Br\"{u}ning and Ma. The results obtained using the formula of Dai and Fang are given at the end of Section \ref{s3}. 
Beside the intensive investigation and the large literature available, comparably few results exist on the quantitative side, namely explicit evaluations of the analytic torsion \cite{Ray} \cite{WY} \cite{dMS}. Continuing along this line of investigation, we study in this work the simplest case of a manifold with boundary, namely the case of a disc. Let $(W,g)$ be a compact connected Riemannian manifold with boundary $\b W$, and  metric $g$, and $\rho:\pi_1(W)\to O(k,\R)$ an orthogonal representation of the fundamental group of $W$. We denote by $\tau_{\rm RS}((W,g);\rho)$ the RS torsion, by $\tau_{\rm RS}((W,\b W,g);\rho)$ the RS torsion of the pair $(W,\b W)$. We denote by $T_{\rm abs}((W,g);\rho)$ the analytic torsion of $(W,g)$ with absolute boundary conditions on $\b W$, and by $T_{\rm rel}((W,g);\rho)$, the analytic torsion of $(W,g)$ with relative boundary condition, both with respect to the representation $\rho$ (see Section \ref{s1} for the precise definitions). Let $D^m_l=\{x\in \R^m~|~|x|\leq l\}$, the disc of radius $l>0$ in the euclidian space $\R^m$, and with  the standard metric $g_E$ induced by the immersion. With this notation, we now state our main results.

\begin{theo}\label{t1} The RS torsion of the disc $D^m_l$ of radius $l>0$ in $\R^m$ with the standard metric $g_E$ induced by the immersion in the Euclidean space, and an orthogonal  representation $\rho$ of the fundamental group, is:
\begin{align*}
\tau_{\rm RS}((D^m_l,g_E);\rho)=\left(\sqrt{{\rm Vol}_{g_E}(D^m_l)}\right)^{{\rm rk}(\rho)}.
\end{align*}

In the same situation, the RS torsion of the pair  $(D^m_l, S^{m-1}_l)$  is:
\begin{align*}
\tau_{\rm RS}((D^m_l, S^{m-1},g_E);\rho)&=\left(\sqrt{{\rm Vol}_{g_E}(D^m_l)}\right)^{(-1)^{m-1}{\rm rk}(\rho)}.
\end{align*}

\end{theo}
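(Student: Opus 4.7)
The first step is to exploit the simple topology of $D^m_l$: since it is contractible and simply connected, the representation $\rho$ acts trivially on the chains of the universal cover, so the twisted homology decomposes as the untwisted homology tensored with $\R^k$, where $k={\rm rk}(\rho)$. Consequently $H_j(D^m_l;\rho)=\R^k$ for $j=0$ and vanishes otherwise, while the long exact sequence of the pair yields $H_j(D^m_l,S^{m-1}_l;\rho)=\R^k$ for $j=m$ and vanishes otherwise. Only one homological degree contributes to the torsion in each case, a substantial simplification.

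The second step is to compute the combinatorial Reidemeister torsion of a convenient CW decomposition of $D^m_l$ (for instance, the cone decomposition over a triangulated $S^{m-1}$) with respect to the \emph{integer} basis of homology induced by the cells. Standard arguments---multiplicativity of torsion under short exact sequences and its triviality on elementary complexes---show that this combinatorial torsion equals $\pm 1$, both for the disc and for the pair; this provides the ``reference value'' on top of which the volume correction will be stacked.

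The third step is to identify the Ray--Singer basis of homology, namely the dual of an $L^2$-orthonormal basis of harmonic cohomology representatives. For the disc the harmonic $0$-forms are constants, and $L^2$-normalization produces the basis $\{e_i/\sqrt{\Vol(D^m_l)}\}_{i=1}^{k}$ of $\H^0$; via the de Rham pairing the corresponding dual basis of $H_0(D^m_l;\rho)$ is $\{\sqrt{\Vol(D^m_l)}\,e_i\}$. For the pair the harmonic $m$-forms with the appropriate boundary conditions are multiples of the Euclidean volume form $dV$, so the $L^2$-normalized basis of $\H^m$ is $\{e_i\otimes dV/\sqrt{\Vol(D^m_l)}\}$, and under Poincar\'e--Lefschetz duality its dual in $H_m(D^m_l,S^{m-1}_l;\rho)$ is $(1/\sqrt{\Vol(D^m_l)})$ times the integer basis coming from the fundamental class $[D^m_l,S^{m-1}_l]$.

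Finally, applying the Milnor change-of-basis formula $\tau_{\rm RS}=\tau_R\cdot\prod_i(\det B_i)^{(-1)^i}$, where $B_i$ expresses the Ray--Singer basis of $H_i$ in terms of the integer basis, one obtains, with $V=\Vol(D^m_l)$: for the disc, $B_0=V^{1/2}\,I$, whence $\tau_{\rm RS}=V^{k/2}=\sqrt{V}^{\,k}$; for the pair, $B_m=V^{-1/2}\,I$, whence $\tau_{\rm RS}=V^{(-1)^{m+1}k/2}=\sqrt{V}^{\,(-1)^{m-1}k}$. The main obstacle is the careful bookkeeping of Poincar\'e--Lefschetz duality in the relative case: it is precisely the passage from the $L^2$-normalized harmonic top form to the corresponding class in $H_m(D^m_l,S^{m-1}_l;\rho)$ that inverts the volume factor and, combined with the exponent $(-1)^m$, produces the alternating sign $(-1)^{m-1}$ in the final formula.
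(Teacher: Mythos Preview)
Your proposal is correct and follows essentially the same route as the paper. The paper carries out the computation in the slightly more general setting of the cone $C_\alpha S^{n}_{l\sin\alpha}$ (specializing to the disc at $\alpha=\pi/2$), uses the minimal three-cell CW decomposition $c^1_{n+1}\cup c^1_n\cup c^1_0$, and computes directly from the definition in equation~(\ref{e2}) rather than invoking the Milnor change-of-basis formula; but the identification of the normalized harmonic forms and the resulting factors $\sqrt{\Vol}$ and $1/\sqrt{\Vol}$ in degrees $0$ and $m$ are exactly as you describe.
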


\begin{proof} The results follow from Propositions \ref{px} and \ref{p2} of Section \ref{s2}, taking $\alpha=\frac{\pi}{2}$.
\end{proof}

\begin{theo}\label{t2} The analytic torsion of the disc $D^{m}_l$ of radius $l>0$ in $\R^{m}$ with the standard metric $g_E$ induced by the immersion, absolute boundary conditions, and an orthogonal representation $\rho$ of the fundamental group, is ($p>0$):
\begin{align*}
\log T_{\rm abs}((D^{2p-1}_l,g_E);\rho)&=\frac{1}{2}{\rm rk}(\rho)\log{\rm Vol}_{g_E}(D_l^{2p-1})+\frac{1}{2}{\rm rk}(\rho)\log 2+\frac{1}{4}{\rm rk}(\rho)\sum_{n=1}^{p-1}\frac{1}{n},\\
\log T_{\rm abs}((D^{2p}_l,g_E);\rho)&=\frac{1}{2}{\rm rk}(\rho)\log{\rm Vol}_{g_E}(D^{2p}_l)+
\frac{p}{2}{\rm rk}(\rho)\sum_{n=1}^p\frac{1}{2n-1}.
\end{align*}

\end{theo}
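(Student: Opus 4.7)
The plan is to deduce Theorem \ref{t2} from Theorem \ref{t1} together with the anomaly formula of Br\"{u}ning and Ma \cite{BM}. Because $D^m_l$ is contractible, $\pi_1(D^m_l)$ is trivial, so every orthogonal representation $\rho$ splits as ${\rm rk}(\rho)$ copies of the trivial one-dimensional representation. Both $\log T_{\rm abs}$ and $\log\tau_{\rm RS}$ are additive on direct sums of representations, so it is enough to prove each formula for the trivial one-dimensional representation and then multiply by ${\rm rk}(\rho)$.

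The main tool is the Br\"{u}ning--Ma identity, which for a compact Riemannian manifold $(W,g)$ with boundary and orthogonal representation $\rho$ reads
\begin{equation*}
\log T_{\rm abs}((W,g);\rho) - \log \tau_{\rm RS}((W,g);\rho) = \frac{{\rm rk}(\rho)}{4}\chi(\b W)\log 2 + {\rm rk}(\rho)\int_{\b W}\B_{\rm BM},
\end{equation*}
where $\B_{\rm BM}$ is the universal boundary form constructed from the shape operator of $\b W\hookrightarrow W$ and from the Riemannian curvature of $W$ near $\b W$. Theorem \ref{t1} identifies $\log\tau_{\rm RS}((D^m_l,g_E);\rho)$ with $\tfrac{1}{2}{\rm rk}(\rho)\log\Vol_{g_E}(D^m_l)$, which accounts for the volume term appearing in Theorem \ref{t2}, so what remains is to evaluate the two boundary contributions on $\b D^m_l = S^{m-1}_l$.

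Since $(D^m_l,g_E)$ is flat, only the pure shape-operator part of $\B_{\rm BM}$ survives; moreover $S^{m-1}_l$ sits umbilically in $D^m_l$ with constant principal curvature $1/l$, so the Br\"{u}ning--Ma integrand reduces to a constant multiple of the round volume form on $S^{m-1}_l$. The Euler characteristic $\chi(S^{m-1})$ equals $2$ when $m=2p-1$ and $0$ when $m=2p$, which produces the isolated $\tfrac{1}{2}{\rm rk}(\rho)\log 2$ term in the odd-dimensional formula and nothing in the even-dimensional one. Expanding the Berezin integral in the definition of $\B_{\rm BM}$, a degree-parity count isolates a single nonzero monomial whose coefficient is an explicit ratio of gamma functions; combining this with the value of $\Vol(S^{m-1}_l)$ and with the duplication formula for $\Gamma$ should then yield $\tfrac{1}{4}\sum_{n=1}^{p-1}1/n$ in dimension $2p-1$ and $\tfrac{p}{2}\sum_{n=1}^{p}1/(2n-1)$ in dimension $2p$.

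The main obstacle I anticipate is the explicit Berezin-integral evaluation of $\B_{\rm BM}$ at an umbilic boundary in flat ambient space, and in particular the combinatorial identification of the resulting gamma-function ratio with the harmonic sums appearing in the statement. The parity split between the two halves of Theorem \ref{t2} should arise naturally at this stage, since the degree constraint governing the Berezin integral forces structurally different surviving monomials according to the parity of $m$.
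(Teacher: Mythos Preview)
Your proposal is correct and follows essentially the same route as the paper: the paper's proof of Theorem~\ref{t2} invokes Theorem~\ref{t1} for the Reidemeister term and then Lemmas~\ref{ddff} and~\ref{df}, which implement precisely the Br\"uning--Ma anomaly computation you outline (including the reduction via flatness to $\mathcal{R}=-2\mathcal{S}_1^2$, the L\"uck term $\tfrac14\chi(S^{m-1})\log 2$, and the explicit Berezin-integral evaluation producing the harmonic sums). The one detail to flag is that the harmonic sums $\sum 1/n$ and $\sum 1/(2n-1)$ do not come from a single surviving monomial but from the beta-type integral $\int_0^1(1-u^2)^j u^{k-1}\,du$ in the expansion of $B(\nabla_1)$, which collapses the double sum to $\tfrac{1}{2\Gamma((m+1)/2)}\sum_j\frac{1}{m-2j-1}\int^B\mathcal{S}_1^{m-1}$; the residual Berezin integral is then a pure volume factor, evaluated in Lemma~\ref{df}.
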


\begin{proof} The results follow from Theorem \ref{t1}, using Lemmas \ref{ddff} and \ref{df} of Section \ref{s3}.
\end{proof}

Beside our main results concern the case of the discs, that are smooth manifolds, our technique easily extends, at least formally, to cover the case of the completed metric cone of angle $\alpha$ over a sphere, $C_\alpha S^n_{l\sin\alpha}$ (see the beginning of Section \ref{s2} for the definition). And this generalization contains the case of the discs. For this reason, we develop our analysis in the more general case of the cone, whenever this is possible. The main problem, to deal with the cone, is the extension of the Hodge theory to the space of $L^2$-forms near the singularity at the tip of the cone. This theory has been developed in the work of J. Cheeger \cite{Che2}, and we will assume his results in the definition of the Laplacian on forms, necessary in order to define the analytic torsion appearing in the following theorems. More details on this aspect, are at the beginning of Section \ref{s3} and of Section \ref{s4}.

\begin{theo}\label{t3} The analytic torsion of the cone $C_\alpha S^1_{l\sin\alpha}$, of angle $\alpha$ and length $l>0$, over the circle, with the standard metric $g_E$ induced by the immersion, absolute/relative boundary conditions, and an orthogonal representation $\rho_0$ of the fundamental group of rank 1, is:
\begin{align*}
\log T_{\rm abs}((C_\alpha S^1_{l\sin\alpha},g_E);\rho_0)=-\log T_{\rm rel}=\frac{1}{2}\log(\pi l^2\sin\alpha)+\frac{1}{2}\sin\alpha.
\end{align*}

In particular, for the disc $D^2_l$ we have:
\begin{align*}
\log T_{\rm abs}((D^2_l,g_E);\rho_0)=-\log T_{\rm rel}((D^2_l,g_E);\rho_0)=\frac{1}{2}\log\pi l^2+\frac{1}{2}.
\end{align*}
\end{theo}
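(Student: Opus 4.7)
The plan is to compute $\log T_{\rm abs}$ directly from the spectrum of the Hodge Laplacian on the cone $C_\alpha S^1_{l\sin\alpha}$, and then obtain $\log T_{\rm rel}$ for free by invoking Poincar\'e-Lefschetz duality between absolute and relative boundary conditions. Since the manifold is even dimensional and the representation is orthogonal, Hodge $*$ intertwines $\Delta_q^{\rm abs}$ with $\Delta_{2-q}^{\rm rel}$, so that $\zeta'_q(0,\Delta_q^{\rm abs}) = \zeta'_{2-q}(0,\Delta_{2-q}^{\rm rel})$, and substituting into the torsion formula $\log T = \frac{1}{2}\sum_q (-1)^q q\, \zeta'_q(0)$ yields $\log T_{\rm rel} = -\log T_{\rm abs}$. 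This reduces the entire problem to absolute boundary conditions.

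The metric on the cone is $g_E = dr^2 + r^2 \sin^2\alpha\, d\theta^2$ for $r \in (0,l]$ and $\theta \in [0,2\pi]$. Following Cheeger's analysis of $L^2$ forms on cones, one separates variables and decomposes the form bundle into the Fourier modes indexed by $k \in \mathbb{Z}$ on $S^1$; the radial part of each eigenvalue equation reduces to a Bessel equation. The resulting eigenvalues are the squared zeros of linear combinations of Bessel functions of orders $\nu_k = |k|/\sin\alpha$, with the coefficients determined by the absolute boundary condition at $r=l$ and the ideal (minimal) boundary condition at the tip. This recovers a situation to which the machinery developed for cones in \cite{dMS} directly applies: one writes each $\zeta_q(s)$ as an inner sum indexed by the Bessel zeros together with an outer sum over $k$.

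The core of the computation is then to extract $\zeta'_q(0)$ for $q = 0,1,2$. I would use the Spreafico-type double series technique: express $\zeta_q(s)$ as a contour integral involving the logarithmic derivative of the appropriate Bessel combination, shift the contour, and isolate the regular value at $s=0$ by splitting off the asymptotic expansion of the Bessel function, which yields the familiar decomposition into a regular part plus ``anomaly'' contributions coming from the Bessel asymptotics. The $k=0$ mode contributes explicit elementary integrals (giving the $\log(\pi l^2 \sin\alpha)$ from the radial Liouville factor and the volume), while the sum over $k \neq 0$ produces the $\frac{1}{2}\sin\alpha$ term after resummation over $\nu_k = |k|/\sin\alpha$. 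Finally, assembling $-\frac{1}{2}\zeta_1'(0) + \zeta_2'(0)$ and using Hodge duality on the cone to simplify gives the stated closed form. The disc case follows upon setting $\alpha = \pi/2$, which smooths the tip and reduces $\nu_k$ to integer orders.

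The main obstacle is the analytic continuation and explicit evaluation at $s=0$ of the double zeta functions indexed by Bessel zeros and the mode number $k$, in particular tracking the cancellations between the $q=0,1,2$ contributions that are needed to reduce the answer to the extraordinarily simple form $\frac{1}{2}\log(\pi l^2\sin\alpha) + \frac{1}{2}\sin\alpha$; handling the ideal boundary condition at the conical tip so that the spectrum is correctly identified (and the $k=0$ harmonic forms properly accounted for) is the delicate technical ingredient, and is where I would rely most heavily on the Cheeger-type framework and on the techniques already used in Section \ref{s3}.
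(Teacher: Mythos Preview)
Your proposal is correct and follows essentially the same route as the paper: reduce to absolute boundary conditions by Poincar\'e--Lefschetz duality, identify the spectrum with squared zeros of $J_{\nu n}$ and $J'_{\nu n}$ via separation of variables on the cone, handle the $n=0$ modes with the elementary Bessel zeta formulas, and treat the double sums over $n\geq 1$ with Spreafico's spectral-decomposition machinery (uniform Debye expansions, the $\Phi$-functions, and the $A_{0,0},A_{0,1}$ terms). One small correction: the relevant techniques are those of Section~\ref{s4.a} (Theorem~\ref{t4}) and references \cite{Spr3,Spr5,Spr9}, not Section~\ref{s3} or \cite{dMS}; otherwise your outline matches the paper's argument step for step.
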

\begin{proof} The proof is in Section  \ref{s4.2}.
\end{proof}

\begin{theo}\label{t5} The analytic torsion of the cone $C_\alpha S^2_{l\sin\alpha}$, of angle $\alpha$ and length $l>0$, over the sphere, with the standard metric $g_E$ induced by the immersion, absolute/relative boundary conditions, and an orthogonal  representation $\rho_0$ of the fundamental group of rank 1, is:
\begin{align*}
\log T_{\rm abs}((C_\alpha S^2_{l\sin\alpha},g_E);\rho_0)=\log T_{\rm rel}=\frac{1}{2}\log\frac{4}{3}l^3-\frac{1}{2}F(0,\csc\alpha)+\frac{1}{4}\sin^2\alpha,
\end{align*}
where the function $F(0,x)$ is given in Appendix \ref{appendixB}.
In particular, for the disc $D^3_l$ we have:
\begin{align*}
\log T_{\rm abs}((D^3_l,g_E);\rho_0)=\log T_{\rm rel}((D^3_l,g_E);\rho_0)=\frac{1}{2}\log\frac{4\pi l^3}{3}+\frac{1}{2}\log 2+\frac{1}{4}.
\end{align*}

\end{theo}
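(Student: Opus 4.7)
The plan is to prove Theorem \ref{t5} by direct spectral computation of the analytic torsion of the cone $C_\alpha S^2_{l\sin\alpha}$, following the strategy used in Section \ref{s4} for the cone over $S^1$ (Theorem \ref{t3}). Introduce coordinates $(r,y)$ with $r\in(0,l]$ and $y\in S^2$, in which the metric reads $g_E=dr^2+r^2\sin^2\alpha\, g_{S^2}$. Decompose the space of $L^2$ forms on the cone according to the Hodge decomposition on $S^2$: scalar harmonics $Y_n$ with eigenvalue $n(n+1)$ and multiplicity $2n+1$, together with exact and coexact $1$-forms. Each Hodge type yields, for each degree $q=0,1,2,3$, a family of forms on the cone whose eigenvalue equation $\Delta\omega=\lambda\omega$ reduces, after separation of variables, to a Bessel equation in $r$.

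The next step is to impose the boundary conditions. At the conical tip $r=0$, Cheeger's $L^2$ Hodge theory \cite{Che2} selects the radial solution $r^{-1/2}J_\nu(\sqrt{\lambda}\,r)$, where the index $\nu$ depends linearly on $n$ with a factor $1/\sin\alpha$; this is the origin of the argument $\csc\alpha$ appearing in $F$. At the smooth boundary $r=l$, the absolute conditions translate into the vanishing of $J_\nu(\sqrt{\lambda}\,l)$ or of a Bessel-type combination, so that the spectrum of $\Delta$ on each summand is encoded by the squared zeros of an explicit entire function. Assemble the partial zeta functions $\zeta_q^{\rm abs}(s)=\sum\lambda^{-s}$ as double series over $n$ and over the Bessel zeros, and apply the Spreafico machinery already used in \cite{dMS} and in the proof of Theorem \ref{t3}: represent each $\zeta_q$ as a Hankel contour integral involving $\log J_\nu$, and split the integrand into the Debye-expansion regular part and a singular remainder.

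Combine the contributions into $\log T_{\rm abs}=\tfrac{1}{2}\sum_q(-1)^q q\,\zeta_q^{\rm abs}{}'(0)$. The regular part, after summation over the $S^2$ spectrum via Barnes-type zeta values, is expected to produce the volume term $\tfrac{1}{2}\log\tfrac{4}{3}l^3$ together with the anomaly term $\tfrac{1}{4}\sin^2\alpha$. The singular part, after cancellations between the four Hodge types, collapses exactly to $-\tfrac{1}{2}F(0,\csc\alpha)$ with $F$ as defined in Appendix \ref{appendixB}. The equality $T_{\rm abs}=T_{\rm rel}$ is obtained via the duality $\zeta_q^{\rm abs}(s)=\zeta_{3-q}^{\rm rel}(s)$ induced by the Hodge $*$ operator on the cone: since the dimension is odd, the two torsions coincide provided $\sum_q(-1)^q\zeta_q^{\rm abs}{}'(0)=0$, a vanishing that can be verified directly from the integral representations. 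Finally, specialize $\alpha=\pi/2$, so $\csc\alpha=1$ and $\sin^2\alpha=1$; the value $F(0,1)$ computed in Appendix \ref{appendixB} is expected to satisfy $-\tfrac{1}{2}F(0,1)=\tfrac{1}{2}\log(2\pi)$, which together with $\mathrm{Vol}_{g_E}(D^3_l)=\tfrac{4}{3}\pi l^3$ reproduces the disc formula and matches Theorem \ref{t2} for $m=3$.

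The principal obstacle is the identification of the singular contribution with precisely the function $F(0,\csc\alpha)$, which demands a careful residue analysis at the half-integer poles of the double zeta function and a delicate tracking of the four Hodge types with their respective Bessel indices. A secondary difficulty is the justification of the absolute/relative symmetry in the presence of the conical singularity at the tip, since the standard Hodge-star argument must be verified to be compatible with the $L^2$ boundary conditions dictated by Cheeger's framework.
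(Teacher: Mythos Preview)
Your outline follows the same overall strategy as the paper's proof in Section \ref{s4.3}: compute the spectrum explicitly (Lemma \ref{eig2}), form the torsion combination $t(s)=-\tfrac{1}{2}\zeta(s,\Delta^{(1)})+\zeta(s,\Delta^{(2)})-\tfrac{3}{2}\zeta(s,\Delta^{(3)})$, evaluate the simple Bessel pieces via equation (\ref{p00}), and treat the remaining double series with the spectral decomposition machinery (Theorem \ref{t4}). However, two of your concrete assertions are wrong and would prevent the computation from closing.

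First, the Bessel index does \emph{not} depend linearly on $n$. For the cone over $S^2$ the eigenvalue of $\Delta^{(0)}_{S^2}$ is $n(n+1)$, and the radial equation produces the index $\mu_n=\sqrt{\nu^2 n(n+1)+\tfrac14}$ with $\nu=\csc\alpha$. This is the sequence $U$ over which the double sequences $S_\pm=\{\tilde j^2_{\mu_n,k,\pm}\}$ are decomposed in the paper; the square root and the shift by $\tfrac14$ are precisely what makes the specialization $\nu=1$ give $\mu_n=n+\tfrac12$ and permits the explicit evaluation $F(0,1)=-\log 2\pi$ via the Riemann zeta function.

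Second, $F$ does not come from a ``residue analysis at half-integer poles''. The only pole of $\zeta(s,U)$ that enters is the simple pole at $s=2$, and the associated $\Phi_{2,\pm}$ contribution yields only the term $\tfrac{1}{4}\sin^2\alpha$. The function $F(s,\nu)$ is instead exactly the difference $A_{0,0,+}(s)-A_{0,0,-}(s)$ of equation (\ref{fi2}): it arises from the normalization constants $\log\bigl(1\pm\tfrac{1}{2\mu_n}\bigr)$ in the Hadamard products of the entire functions $H^\pm_{\mu_n}(z)=\pm\tfrac12 I_{\mu_n}(z)+zI'_{\mu_n}(z)$, which encode the absolute boundary condition at $r=l$ on the $1$-form eigenmodes $\beta^{(1)}_n$ and $\gamma^{(1)}_n$. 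Without identifying these specific functions $G^\pm_\nu$, $H^\pm_\nu$ and their product expansions you cannot produce the series defining $F$. Your sketch also omits the cancellation that reduces $t(s)$ to $-\tfrac12 z_{1/2}-\tfrac12 z_{3/2}+\tfrac12(Z_+-Z_-)$, which is where most of the simplification actually occurs.
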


\begin{proof} The proof is in Section  \ref{s4.3}.
\end{proof}

Some comments on these results are in order.
\begin{enumerate}

\item The geometric-topological approach (of Section \ref{s2}) proves to be much more effective and natural for evaluating the analytic torsion with respect to a direct calculation starting from the definition of the analytic torsion. The geometric-topological approach also gives a clear interpretation of the different terms appearing in the final result, as described in the next remarks (a similar result was obtained for the spheres in \cite{dMS}).

\item In the formula for the analytic torsion of the disc $D^2_l$ (obtained particularizing the second formula given in Theorem \ref{t2}), two terms appear. The first is the topological term, corresponding to the volume of the disc, and in fact comes from the topological part of the analytic torsion, namely the RS torsion, that can be computed using Theorem \ref{t1}. The second term comes from the boundary contribution. In this case, the unique boundary contribution is of the type described by Dai and Fang \cite{DF} and Br\"{u}ning and Ma \cite{BM}, since the manifold is not a product near the boundary, and there is no contribution of type described by L\"{u}ck \cite{Luc}, since the Euler characteristic of the boundary is trivial. 
The result given in Theorem \ref{t2} is obtained by using the formula of Br\"{u}ning and Ma, and is consistent with the result obtained by direct calculation of the analytic torsion, given in Theorem \ref{t5}. However, in this case the formula of Dai and Fang gives the same result, see the analysis at the end of Section \ref{s3}.

\item In the formula for the analytic torsion of the disc $D^3_l$ (obtained particularizing the first formula given in Theorem \ref{t2}), three terms appear. The first is the topological term, corresponding to the volume of the disc, as in the case of $D^2_l$. The second term comes from the boundary contribution, and is precisely the term depending on the Euler characteristic as predicted by the formula of Luck \cite{Luc}. The last term also must come from the boundary, and therefore comes from the fact that the manifold is not a product near the boundary. The result given in Theorem \ref{t2} is obtained by using the formula of Br\"{u}ning and Ma, and is consistent with the result obtained by direct calculation of the analytic torsion, given in Theorem \ref{t5}. In this case, it is easy to check that all the contributions arising from the formula in Theorem 1 of \cite{DF} vanish in this case (see formula (\ref{ddff2}) at the end of Section \ref{s3}). Thus, this result furnishes a counter example to the theorem of Dai and Fang, at least for the even case (see also Remark \ref{rema1} at the end of Section \ref{s3}).

\item An easy calculation (see for example \cite{Luc} (1.19), where a multiplicative  factor 2 appears, arising from the different definition of the analytic torsion) shows that the analytic torsion of the one dimensional disc with absolute boundary conditions is
\[
\log T_{\rm abs}((D^1_l,g_E),\rho_0)=\frac{1}{2}\log l+\frac{1}{2}\log 2.
\]

This is consistent with Theorem \ref{t2}, since in this case the metric is a product near the boundary, and therefore the generalization of the Cheeger-M\"{u}ller theorem proved by Luck in \cite{Luc} reads $T(W)=\tau(W)+\frac{1}{4}\chi(\b W)\log 2$.

\item Using Theorem \ref{t3} and Proposition \ref{p2}, the ratio of the analytic and Reidemeister torsion on the cone over the disc reads
\[
\log T(C_\alpha S^1_{l\sin\alpha})-\log\tau(C_\alpha S^1_{l\sin\alpha})=\frac{1}{2}\sin\alpha.
\]

On the other side, using the formula given in Remark  \ref{rema}, the boundary term in this case is
precisely $\frac{1}{2}\sin\alpha$. Therefore, in the formula for the analytic torsion there are no terms coming from the singularity, or in other words in this case the analytic torsion does not detect the presence of the singularity. A torsion type invariant that detect the presence of the singularity was introduced in \cite{Spr6}, by considering the Laplacian with Dirichlet boundary conditions on forms.
We can not perform a similar analysis in the case of the sphere, since we do not have a satisfactory description of the boundary term in that case.
\end{enumerate}

\section{Preliminary and notation}
\label{s1}

In this section we recall the basic fact about torsion and we introduce some notation. We prefer to introduce the R torsion starting from the more abstract Whitehead (W) torsion. From one side, this makes the presentation far more general, and far more useful for further applications we are working on, from the other this allows to describe in a concise way the main properties of invariance of the  R torsion, necessary for all our applications. For the same reasons, we consider the general case of a non simply connected space. This section is essentially based on \cite{Mil}, \cite{Coh} and \cite{RS}.

Let $R$ be an associative ring with unit.
Let $K_1(R)$ be the Whitehead group of $R$, i.e. the abelianization of the general linear group $Gl(R)$. Let $\bar K_1(R)=K_1(R)/\{[1],[-1]\}$ be the reduced Whitehead group. Let
$$
\xymatrix{
C:& C_m\ar[r]^{\b_m}&C_{m-1}\ar[r]^{\b_{m-1}}&\dots\ar[r]^{\b_2}&C_1\ar[r]^{\b_1}&C_0,
}
$$
be a chain complex of (finite dimensional) free $R$-modules\footnote{We will assume that if $M$ is any finitely generated module over $R$ then any two bases of $M$ have the same cardinality.}. Denote by $Z_q=\ker \b_q$, by $B_q=\Im \b_{q+1}$, and  by
$H_q(C)=Z_q/B_q$ as usual. Assume that both $B_q$ and $H_q$ are free $R$-modules  for each $q$. For two bases $x=\{x_1,\dots, x_k\}$ and
$y=\{y_1,\dots, y_k\}$ of a free $R$-module $M$, denote by $(y/x)$ the matrix defined by the change of bases, and by $[y/x]$ the class of this matrix
in $\bar K_1 (R)$. For each $q$, fix a base $c_q$ for $C_q$, and a base $h_q$ for $H_q(C)$. Let $b_q$ be a (independent) set of elements of $C_q$ such that
$\b_q(b_q)$ is a base for $B_{q-1}$. Then the set of elements $\{\b_{q+1}(b_{q+1}),h_q, b_q\}$ is a base for $C_q$. In this situation, the
Whitehead torsion of the complex $C$ with respect to the graded base $h=\{h_q\}$ is the class
\[
\tau_{\rm W}(C;h)=\sum_{q=0}^m (-1)^q [\b_{q+1}(b_{q+1}),h_q,b_q/c_q],
\]
in $\bar K_1 (R)$.

Let $\Z G$ be the group ring of a given group $G$. Then, $G$ embeds homomorphically in the group of units $(\Z G)^\times =Gl(1,\Z G)\subset Gl(k,\Z G)$. This immersion passes to the quotient defining an homomorphism of $G$ into a normal subgroup of $\bar K_1(\Z G)$. The quotient space is the Whitehead group $Wh(G)$ of $G$, and we denote by $w:\bar K_1(\Z G)\to Wh(G)$ the natural projection.

Let $(K,L)$ be a pair of connected finite cell complexes of dimension $m$, and $(\tilde K,\tilde L)$ its universal covering complex pair, and identify the fundamental group of $K$
with the group of the covering transformations of $\tilde K$.  Let $C((\tilde K,\tilde L);R)$ be the chain complex of $(\tilde K,\tilde L)$ with coefficients in $R$. The action of the
group of covering transformations makes each chain group $C_q((\tilde K,\tilde L);R)$ into a module over the group ring $R\pi_1(K)$, and each of these modules
is $R \pi_1(K)$-free and finitely generated by the natural choice of the $q$-cells of $K-L$. Since $K$ is finite it follows that $C((\tilde K,\tilde L);R)$ is
free and finitely generated over $R \pi_1(K)$. We have got a complex of free finitely generated modules over $R \pi_1(K)$ that, following standard notation, we denote by $C((\tilde K,\tilde L);R\pi_1(K))$ of free finitely generated modules over $R \pi_1(K)$.
Any fixed choice of a graded basis $c$ for $C((\tilde K,\tilde L);R \pi_1(K))$ and of  a graded basis of $H(C((\tilde K,\tilde L); R \pi_1(K)))$, allows to define the torsion $\tau_{\rm W}(C((\tilde K,\tilde L);R\pi_1(K));h)$ in $\bar K_1(R\pi_1(K))$. However, there is still some ambiguity due to the arbitrariety of the choice of the representative cells in the covering space $\tilde K$, projecting over the cells representing the fixed bases of $C_q$. Taking integer coefficients, the different choices of representative cells in the covering give different torsions in $\bar K_1(\Z \pi_1(K))$ that however project to the same class in $Wh(\pi_1(K))$.
Assuming $H_q(C((\tilde K,\tilde L); \Z \pi_1(K)))$ are free finitely generated modules over $\Z \pi_1(K)$, the Whitehead torsion of $K$ with respect to the graded base $h$ is the class
\[
\tau_{\rm W}((K,L);h)=w(\tau_{\rm W}(C((\tilde K,\tilde L);\Z\pi_1(K));h)),
\]
in $Wh(\pi_1(K))$. If $(K,L)$ is the cellular (or simplicial)
decomposition of a space $(X,A)$, the Whitehead torsion of $(X,A)$ is defined accordingly, and denoted by $\tau_{\rm W}((X,A);h)$. It was proved in \cite{Mil} that $\tau_{\rm W}((X,A);h)$ does not depend on the decomposition $K$.

If $\varphi:R\to R'$ is a ring homomorphism, we can form a new free complex $R'\otimes_R C_q$,
using the homomorphism $\varphi$ to make $R'$ into a right $R$-module. Then, $\tau_{\rm W}(R'\otimes_R C;h')=\varphi_* \tau_{\rm W}(C(R);h)$. This is used to define the torsion with respect to a representation of the fundamental group. Let $\rho:\pi_1(X)\to G$ be a representation of the fundamental group in some group $G$. Then, $\rho$ extends to a unique ring homomorphism from $\Z\pi_1(X)$ to $\Z G$, and we form the complex
\[
C_q((X,A);(\Z\pi_1(K))_\rho)=\Z G\otimes_{\rho} C_q((\tilde X,\tilde A);\Z\pi_1(K)),
\]
of free finitely generated $\Z G$-modules, and the Whitehead torsion of $(X,A)$ with respect to the representation $\rho$ (and the graded basis $h$) as
\[
\tau_{\rm W}((X,A);h,\rho)=w(\tau_{\rm W}(C((X,A);(\Z\pi_1(X)_\rho));h)),
\]
in $Wh(\pi_1(X))$.

This construction is particularly usefull when $G$ is a subgroup of the group of units of a field. For example, let $G=O(k,\R)$. Then, the determinant function $\det:K_1(\Z O(k,\R))\to \R^\times$, induces an isomorphism of $Wh(O(k,\R))=\R^+$, the positive real numbers. In this situation, we define the R torsion by $\tau_{\rm R}((X,A);h,\rho)=\det \tau_{\rm W}((X,A);h,\rho)$. The multiplicative notation is more convenient in this case, thus we have
\beq\label{e1}
\tau_{\rm R}((X,A);h,\rho)=\prod_{q=0}^m \left| \det\rho(\b_{q+1}(b_{q+1}),h_q,b_q/c_q)\right|^{(-1)^q},
\eeq
in $\R^+$.

Next, let $W$ be a connected orientable Riemannian manifold of dimension $m$ with possible boundary $\partial W$, and Riemannian metric $g$. Then, all the previous assumptions are satisfied, and we can define an absolute R torsion  $\tau_{\rm R}((W,\emptyset);h,\rho)$, and a relative R torsion  $\tau_{\rm R}((W,\b W);h,\rho)$, for each representation $\rho$ of the fundamental group, and for each fixed graded base $h$ for the homology of $(W,\emptyset)$, and for the relative homology of $(W,\b W)$, respectively. In this context, Ray and Singer suggest a natural geometric invariant object, by fixing an appropriate base $h$ using the geometric
structure, as follows.

Let $E_\rho\to W$ be the real vector bundle associated to the representation $\rho:\pi_1(W)\to O(k,\R)$. Let
$\Omega(W,E_\rho)$ be the graded linear space of smooth forms on $W$ with values in $E_\rho$. A base for $\Omega^q(W, E_\rho)$ is of the form
$\{C^\infty(W)\otimes dx_I\otimes_\rho e_i\}$. The exterior differential on $W$ defines the exterior differential on $\Omega^q(W, E_\rho)$, $d:\Omega^q(W, E_\rho)\to
\Omega^{q+1}(W, E_\rho)$. The metric $g$ defines an Hodge operator on $W$ and hence on $\Omega^q(W, E_\rho)$, $*:\Omega^q(W, E_\rho)\to
\Omega^{n-q}(W, E_\rho)$, and,  using the inner product in $E_\rho$,
an inner product on $\Omega^q(W, E_\rho)$,
as follows. Let
\begin{align*}
&\omega=\sum_{I,j} \omega_{I,j}(x) d x^I\otimes e_j,&
\eta=\sum_{I',j'} \eta_{I',j'}(x) d x^{I'}\otimes e_{j'},\\
\end{align*}
then:
\beq\label{inner}
(\omega,\eta)=\sum_{I,I',j,j'}\int_W \omega_{I,j}(x)\omega_{I',j'}(x)dx^I\wedge *d x^{I'}(e_j,e_{j'}),
\eeq
where $(v,w)=v^T w$ is the inner product in $\R^k$, and $\wedge$ is the exterior product in the de Rham bundle of forms on $W$, $\Lambda W$.

In order to proceed we recall some results on manifolds with boundary. If $W$ has a boundary $\b W$, then there is a natural splitting of $\Lambda W$ as direct sum of vector bundles $\Lambda \b W\oplus N^* W$, where $N^*W$ is the dual to the normal bundle to the boundary. Locally, this reads as follows. Let $\b_r$ denotes the outward pointing unit normal vector to the boundary, and $dr$ the correspondent one form. Near the boundary we have the collar decomposition
$C(\b W)=[0,-\epsilon)\times \b W$, and if $y$ is a system of local coordinates on the boundary, then $x=(r,y)$ is a local system of coordinates in $C(\b W)$. The smooth forms on $W$ near the boundary decompose as
\[
\omega=\omega_{\rm tan}+\omega_{\rm norm},
\]
where $\omega_{\rm norm}$ is the orthogonal projection on the subspace generated by $dr$, and $\omega_{\rm tan}$ is in $\Lambda \b W$. We  write
\[
\omega=\omega_1+ \omega_{2}\wedge dr,
\]
where $\omega_j\in C^\infty(\b W)\otimes \Lambda \b W$, and
\beq\label{dec}
*\omega_2=dr \wedge *\omega.
\eeq

Define absolute boundary conditions by
\[
B_{\rm abs}(\omega)=\omega_{\rm norm}|_{\b W}=\omega_1|_{\b W}=0,
\]
and relative boundary conditions by
\[
B_{\rm rel}(\omega)=\omega_{\rm tan}|_{\b W}=\omega_2|_{\b W}=0.
\]

Let $\B(\omega)=B(\omega)\oplus B((d+d^\dagger)(\omega))$. Then the operator $\Delta=(d+d^\dagger)^2$ with boundary conditions $\B(\omega)=0$  is self adjoint. Note that $\B$ correspond to
\beq\label{abs}
\B_{\rm abs}(\omega)=0\hspace{20pt}{\rm if~ and~ only~ if}\hspace{20pt}\left\{\begin{array}{l}\omega_{\rm norm}|_{\b W}=0,\\
(d\omega)_{\rm norm}|_{\b W}=0,\\
       \end{array}
\right.
\eeq
\beq\label{rel}
\B_{\rm rel}(\omega)=0\hspace{20pt}{\rm if~ and~ only~ if}\hspace{20pt}\left\{\begin{array}{l}\omega_{\rm tan}|_{\b W}=0,\\
(d^\dagger\omega)_{\rm tan}|_{\b W}=0,\\
       \end{array}
\right.
\eeq


Let
\begin{align*}
\H^q(W,E_\rho)&=\{\omega\in\Omega^q(W,E_\rho)~|~\Delta^{(q)}\omega=0\},\\
\H_{\rm abs}^q(W,E_\rho)&=\{\omega\in\Omega^q(W,E_\rho)~|~\Delta^{(q)}\omega=0, B_{\rm abs}(\omega)=0\},\\
\H_{\rm rel}^q(W,E_\rho)&=\{\omega\in\Omega^q(W,E_\rho)~|~\Delta^{(q)}\omega=0, B_{\rm rel}(\omega)=0\},
\end{align*}
be the spaces of harmonic forms with boundary conditions. Then we have the following de Rham maps $\A^q$ (that induce isomorphisms in cohomology),
\begin{align*}
\A^q:&\H^q(W,E_\rho)\to C^q(W;E_\rho),\\
\A_{\rm rel}^q:&\H_{\rm rel}^q(W ,E_\rho)\to C^q((W,\b W);E_\rho),\\
\end{align*}
with
\[
\A^q(\omega)(c\otimes_\rho v)=\A^q_{\rm rel}(\omega)(c\otimes_\rho v)=\int_{ c} (\omega,v),
\]
where $c\otimes_\rho v$ belongs to $C^q(W;E_\rho)$ in the first case, and belongs to $C^q((W,\b W);E_\rho)$  in the second case, and $c$ is identified with the $q$-cell (simplicial or cellular) that $c$ represents.

Next, let $K$ be a cellular or simplicial decomposition of $W$ and $L$ of $\b W$. Let $sd K$ be the first barycentric subdivision of $K$. Let $\hat K$ be the dual block complex of $K$. For each $q$-cell (simplex) $c$ in $K$, let $\hat c$ denotes the $(m-q)$-dual block of $c$. If we take $q$-cells $c$ in $K-L$, then the collection of the cells $\hat c$ is a base for the relative chain group $C_q(K,L;\Z)$. Therefore the bijection $c\to \hat c$, induces a bijection between the bases of $C_q(K,L;\Z)$ and $C_{m-q}(K;\Z)$, and therefore an isomorphism (the image is in $\hat K-\hat L$ since $\hat c$ is disjoint from $L$ if $c\in K-L$) commuting with boundary operators (up to the sign factor $(-1)^{m-q+1}$)
\begin{align*}
\varphi_q:&C_q(K,L;\Z)\to C^{m-q}(\hat K-\hat L;\Z),\\
\varphi_q:&c\mapsto \hat c.
\end{align*}

Dualizing, let $c^*$ denotes the cochain dual to the chain $c$. Then, we have the Lefschetz-Poincar\'e isomorphisms (that induces isomorphisms from homology onto cohomology)
\begin{align*}
\P_q:&C_q(K,L;\Z)\to C^{m-q}(\hat K-\hat L;\Z),\\
\P_q:&c\mapsto (\hat c)^*.
\end{align*}

Following Ray and Singer, we introduce the de Rham maps $\A_q$:
\begin{align*}
\A^{\rm rel}_q:&\H^q(W,E_\rho)\to C_q((W,\b W);E_\rho),\\
\A^{\rm rel}_q:&\omega\mapsto (-1)^{(n-1)q}\P_q^{-1}\A^{n-q}*(\omega),\\
\A^{\rm abs}_q:&\H_{\rm abs}^q(W,E_\rho)\to C_q(W;E_\rho),\\
\A^{\rm abs}_q:&\omega\mapsto (-1)^{(n-1)q}\P_q^{-1}\A_{\rm rel}^{n-q}*(\omega),
\end{align*}
both defined by
\beq\label{aa}
\A^{\rm abs}_q(\omega)=\A^{\rm rel}_q(\omega)=(-1)^{(n-1)q}\sum_{j,i} \left(\int_{\hat c_{q,j}}(*\omega,e_i)\right)
c_{q,j}\otimes_\rho e_i,
\eeq
where the sum runs over all $q$-simplices $c_{q,j}$ of $W$ in the first case, but runs over all $q$-simplices $c_{q,j}$ of $W-\b W$ in the second case.

In this situation, let $a$ be a graded orthonormal base for the space of the harmonic forms in $\Lambda W \otimes_\rho \R^k$. Then following Ray and Singer \cite{RS} Definition 3.6,  we call the positive real number
\[
\tau_{\rm RS}((W,g);\rho)=\tau_{\rm R}(W;\A^{\rm abs}(a),\rho),
\]
the Ray-Singer (RS) torsion of $(W,g)$ with respect to the representation $\rho$, and absolute boundary condition, and
\[
\tau_{\rm RS}((W,\b W,g);\rho)=\tau_{\rm R}((W,\b W);\A^{\rm rel}(a),\rho),
\]
the Ray-Singer (RS) torsion of $(W,\partial W,g)$ with respect to the representation $\rho$, and relative boundary condition.  It is possible to prove that  both $\tau_{\rm RS}((W,g);\rho)$ and $\tau_{{\rm RS}}((W,\b W,g);\rho)$ do not depend
on the choice of the orthonormal base $a$.

Going back along the previous construction, we find out that the formula in equation (\ref{e1}) gives in the present situation the following formula
for the RS torsion (where $N$ can be either $\b W$ or the empty set, and $\A_q$ is the relative or the absolute one, respectively)
\beq\label{e2}
\tau_{\rm RS}((W,N,g);\rho)=\prod_{q=0}^m \left| \det\rho(\b_{q+1}(b_{q+1}),\A_q(a_q),b_q/c_q)\right|^{(-1)^q},
\eeq
where $\det$ is the usual determinant of the matrix of the change of bases 
in the complex of real vector spaces $C((W,N);E_\rho)$:
\beq\label{cc}
\xymatrix{
C_m\otimes_\rho \R^k\ar[r]^{\hspace{16pt}\b_m\otimes_\rho 1}&\dots\ar[r]^{\hspace{-15pt}\b_2\otimes_\rho 1}&C_1\otimes_\rho \R^k\ar[r]^{\b_1\otimes_\rho 1}&C_0\otimes_\rho \R^k.
}
\eeq

We conclude this section with the definition of the analytic torsion. First assume $W$ has no boundary. With the inner product defined in equation (\ref{inner}), $\Omega(W,E_\rho)$ is an Hilbert space. Let $d^\dagger=(-1)^{mq+m+1}*d*$ be the formal adjoint of $d$, then the Laplacian $\Delta=(d^\dagger d+d d^\dagger)$ is a symmetric non negative definite operator in $\Omega(W,E_\rho)$, and has pure point spectrum $\Sp \Delta$. Let $\Delta^{(q)}$ be the restriction of $\Delta$ to $\Omega^q(W,E_\rho)$. Then we define the zeta function of $\Delta^{(q)}$ by the series
\[
\zeta(s,\Delta^{(q)})=\sum_{\lambda\in \Sp_+ \Delta^{(q)}} \lambda^{-s},
\]
for $\Re(s)>\frac{m}{2}$, and where $\Sp_+$ denotes the positive part of the spectrum. The above series converges uniformly for $\Re(s)>\frac{m}{2}$, and extends to a meromorphic function analytic at $s=0$. Following Ray and Singer \cite{RS} Definition 7.2, we define the analytic torsion of $(W,g)$ by
\beq\label{analytic}
\log T((W,g);\rho)=\frac{1}{2}\sum_{q=1}^m (-1)^q q \zeta'(0,\Delta^{(q)}).
\eeq

If $W$ has a boundary, we denote by $T_{\rm abs}((W,g);\rho)$ the number defined by equation (\ref{analytic}) with $\Delta$ satisfying absolute boundary conditions, and by $T_{\rm rel}(W,g);\rho)$ the number defined by the same equation with $\Delta$ satisfying relative boundary conditions.

\section{The RS torsion of the geometric cone over a sphere}
\label{s2}

In this section we compute the RS torsion of the $m$-dimensional disc, $D^m_l$. However, we will consider the slightly more general case of a cone. Namely we consider the cone of angle $\alpha$, $C_\alpha S^{n}$, constructed in $\R^{n+2}$ over the sphere $S^{n}$, $m=n+1$, as defined below. It turns out that $C_\alpha S^{n}$ is not a smooth Riemannian manifold, but is a space with a singularity of conical type as defined by Cheeger in \cite{Che1} (2.1). More precisely, $C_\alpha S^n$ coincides with the completed finite metric cone of Cheeger over the sphere of radius $\sin\alpha$. Note that we are adding a point at the tip of the cone, in order to have a simply connected space. The resulting space is compact, but obviously is not a smooth Riemannian manifold. The space obtained removing the tip, is an open complete smooth Riemannain manifold with the metric induced by the immersion, as in \cite{Che1}. It is clear that we can define the W torsion, and the R torsion on $C_\alpha S^n$. For what concerns the RS torsion, some care is necessary, since we do not know how the de Rham theory extends. More precisely, we do not know if we have the de Rham maps $\A^q$ for spaces with conical singularities, in general. However, we show that we can define these maps in the particular case of $C_\alpha S^{n}$, and therefore we define the RS torsion accordingly. In particular, the construction cover the smooth case of the disc.

Let $S^n_b$ be the standard sphere of radius $b>0$ in $\R^{n+1}$, $S^{n}_b=\{x\in\R^{n+1}~|~|x|=b\}$ (we simply write $S^n$ for $S^n_1$). Let $C_\alpha
S^n_{l\sin\alpha}$ be the cone of angle $\alpha$ over $S^n_{l\sin\alpha}$ in $\R^{n+2}$. Note that the disc corresponds to $D^{n+1}_l=C_\frac{\pi}{2} S^{n}_l$. We parameterize $C_{\alpha}S^n_{l\sin\alpha}$ by
\begin{equation*}\label{}C_{\alpha}S_{l\sin\alpha}^{n}=\left\{
\begin{array}{rcl}
x_1&=&r \sin{\alpha} \sin{\theta_n}\sin{\theta_{n-1}}\cdots\sin{\theta_3}\sin{\theta_2}\cos{\theta_1} \\[8pt]
x_2&=&r \sin{\alpha} \sin{\theta_n}\sin{\theta_{n-1}}\cdots\sin{\theta_3}\sin{\theta_2}\sin{\theta_1} \\[8pt]
x_3&=&r \sin{\alpha} \sin{\theta_n}\sin{\theta_{n-1}}\cdots\sin{\theta_3}\cos{\theta_2} \\[8pt]
&\vdots& \\
x_{n+1}&=&r \sin{\alpha} \cos{\theta_n} \\[8pt]
x_{n+2}&=&r \cos{\alpha}
\end{array}
\right.\end{equation*}
with $r \in [0,l]$, $\theta_1 \in [0,2\pi]$, $\theta_2,\ldots,\theta_n \in [0,\pi]$, $\alpha$ is a fixed positive real number, and
$0<a=\frac{1}{\nu}= \sin{\alpha}\leq 1$. The induced metric is ($r>0$)
\begin{align*}
g_E &=dr \otimes dr + r^2 a^2 g_{S^{n}_{1}}\\
&= dr\otimes dr + r^2 a^2 \left(\sum^{n-1}_{i=1} \left(\prod^{n}_{j=i+1} \sin^2{\theta_j}\right) d\theta_i \otimes
d\theta_i + d\theta_n \otimes d\theta_n\right),
\end{align*}
and $\sqrt{|\det g_E|}=(r\sin\alpha)^{n}(\sin\theta_n)^{n-1}(\sin\theta_{n-1})^{n-2}\cdots(\sin\theta_3)^{2}(\sin\theta_2)$.
Let $K$ be the cellular decomposition of $C_{\alpha}S^{n}_{l\sin\alpha}$ , with one top cell, one $n$-cell and one $0$-cell,
$K=c^1_{n+1} \cup c^1_{n} \cup c^1_0$. Let the subcomplex $L$ of $K$ be the cellular decomposition of $S^{n}_{l\sin\alpha}$, $L
= c^1_n \cup c^1_0$. 

We consider first the case of relative boundary conditions. Then the complex of real vector spaces of equation (\ref{cc}) reads
\[
C_{\rm rel}: \xymatrix{0\ar[r] & \mathbb{R}[c^1_{n+1}] \ar[r] & 0\ar[r]&\cdots \ar[r]&0\ar[r] & 0 \ar[r] & 0},
\]
with preferred base $c_{n+1} = \{c^{1}_{n+1}\}$. To fix the base for the homology, we need a graded orthonormal base
$a$ for the harmonic forms. Since a base for $\Omega^{n+1}(C_{\alpha}S^{n}_{l\sin\alpha})$ is $\{\sqrt{|\det g_E|}dr \wedge
d\theta_1\wedge\cdots\wedge d\theta_n\}$, we get $a_{n+1} = \left\{\frac{\sqrt{|\det g_E|}dr
\wedge d\theta_1\wedge\cdots\wedge d\theta_n}{\sqrt{\Vol_{g_E}(C_{\alpha}S^n_{l\sin\alpha})}} \right\}$. Applying the formula in equation (\ref{aa}) for the de Rham map,  we obtain $h_{n+1}=\{h_{n+1}^1\}$, with
\begin{align*}
h^1_{n+1} &=\A^{\rm rel}_{n+1}(a^1_{n+1})= \frac{1}{\sqrt{\Vol_{g_E}(C_{\alpha}S^{n}_{l\sin\alpha})}}\int_{\rm pt}\ast \sqrt{|\det g_E|}dr\wedge
d\theta_1\wedge\cdots\wedge d\theta_n c^1_{n+1}\\
&= \frac{1}{\sqrt{\Vol_{g_E}(C_{\alpha}S^{n}_{l\sin\alpha})}}  c_{n+1}^1.
\end{align*}

As $b_q = \emptyset$, for all $q$, we have that
\[
|\det (h_{n+1}/c_{n+1})| = \frac{1}{\sqrt{\Vol_{g_E}(C_{\alpha}S^{n}_{l\sin\alpha})}}, \qquad |\det (b_{q}/c_{q})| =
1,~ 0\leq q\leq n.
\]

Applying the definition in equation (\ref{e2}), this proves the following result.

\begin{prop}\label{px}
\begin{align*}
\tau_{\rm RS}((C_{\alpha}S^{n}_{l\sin\alpha},S^n_{l\sin\alpha},g_E);\rho)  = & \left(\sqrt{\Vol_{g_E}(C_{\alpha}S^{n}_{l\sin\alpha})}\right)^{(-1)^{n}{\rm rk}(\rho)}. \\
\end{align*}
\end{prop}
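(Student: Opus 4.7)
The plan is to turn the crank on the definition (\ref{e2}) using the cellular and Hodge-theoretic data already assembled above the statement. My first step is to observe that since the subcomplex $L=c^1_n\cup c^1_0$ absorbs every cell of $K$ outside the top dimension, the relative chain complex $C((C_\alpha S^n_{l\sin\alpha},S^n_{l\sin\alpha});\Z)$ is concentrated in degree $n+1$ and equals $\Z[c^1_{n+1}]$. Tensoring through $\rho$, the complex in (\ref{cc}) reduces to a single copy of $\R^k$ in degree $n+1$ with $k={\rm rk}(\rho)$, all differentials vanish, and consequently each auxiliary family $b_q$ in the torsion data is empty. The product in (\ref{e2}) therefore collapses to the single factor at $q=n+1$, carried with exponent $(-1)^{n+1}$.

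Next I would compute that single factor. The preferred basis of $C_{n+1}\otimes_\rho\R^k$ is $\{c^1_{n+1}\otimes_\rho e_i\}_{i=1}^k$, where $\{e_i\}$ is the standard orthonormal basis of $\R^k$. The unit-$L^2$-norm harmonic top form is $a^1_{n+1}=(\sqrt{|\det g_E|}/\sqrt{\Vol_{g_E}(C_\alpha S^n_{l\sin\alpha})})\,dr\wedge d\theta_1\wedge\dots\wedge d\theta_n$, and the de Rham map $\A^{\rm rel}_{n+1}$ from (\ref{aa}) sends it to the evaluation of the constant $0$-form $*a^1_{n+1}=1/\sqrt{\Vol_{g_E}(C_\alpha S^n_{l\sin\alpha})}$ at the dual-block point lying in the smooth interior of $c^1_{n+1}$. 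This produces the basis $\{(1/\sqrt{\Vol_{g_E}(C_\alpha S^n_{l\sin\alpha})})\,c^1_{n+1}\otimes_\rho e_i\}_{i=1}^k$ in the image, so the change-of-basis matrix is $(1/\sqrt{\Vol_{g_E}(C_\alpha S^n_{l\sin\alpha})})\,I_k$ with absolute determinant $(1/\sqrt{\Vol_{g_E}(C_\alpha S^n_{l\sin\alpha})})^k$; the sign $(-1)^{(n-1)(n+1)}$ appearing in (\ref{aa}) is irrelevant because (\ref{e2}) is written in absolute values.

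Substituting into (\ref{e2}) with exponent $(-1)^{n+1}$ yields at once
\[
\tau_{\rm RS}((C_\alpha S^n_{l\sin\alpha},S^n_{l\sin\alpha},g_E);\rho)=\left(\sqrt{\Vol_{g_E}(C_\alpha S^n_{l\sin\alpha})}\right)^{(-1)^n{\rm rk}(\rho)},
\]
which is the stated formula. I do not expect a genuine obstacle: the only delicate point is the extension of de Rham theory across the conical tip flagged at the beginning of Section \ref{s2}, but in this top-degree case the harmonic form has finite $L^2$-norm equal to the total volume, and the dual-block point lies in the smooth interior of $c^1_{n+1}$ away from the singularity, so the definition of $\A^{\rm rel}_{n+1}$ is unambiguous and the calculation proceeds without further subtlety.
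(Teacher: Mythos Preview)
Your proof is correct and follows the same route as the paper: identify the relative complex as concentrated in degree $n+1$, compute $\A^{\rm rel}_{n+1}$ of the unit-volume top form as $(1/\sqrt{\Vol})\,c^1_{n+1}$, and read off the single nontrivial change-of-basis determinant in (\ref{e2}). You are in fact slightly more explicit than the paper about how the exponent ${\rm rk}(\rho)$ enters (via the tensor with $\R^k$) and about why the dual-block evaluation avoids the conical tip, but these are elaborations of the same argument rather than a different one.
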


Next, we consider the case of absolute boundary conditions. By equation (\ref{cc}), the relevant complex is
\[
C_{\rm abs}: \xymatrix{0\ar[r] & \mathbb{R}[c^1_{n+1}] \ar[r] & \mathbb{R}[c^{1}_{n}]\ar[r]&0\ar[r]&\cdots \ar[r]&0\ar[r]&
\mathbb{R}[c^1_0]\ar[r] & 0},
\]
with preferred bases $c_{n+1} = \{c^{1}_{n+1}\}$, $c_{n} = \{c^{1}_{n}\}$ and $c_{0} = \{c^{1}_{0}\}$. Hence,
$H_{p}(K) = 0$, for $p>1$, and $H_{0}(K) = \R[c^1_0]$. Since a base for $\Omega^{0}(C_{\alpha}S^{n}_{l\sin\alpha})$ is the constant form $\{1\}$, we have $a_{0}
= \left\{\frac{1}{\sqrt{\Vol_{g_E}(C_{\alpha}S^n_{l\sin\alpha})}}\right\}$. Applying the formula in equation (\ref{aa}) for the de Rham map,  we obtain $h_{0}=\{h_{0}^1\}$, with
\begin{align*}
h^1_{0} &=\A_{0}^{\rm abs}(a^1_{0})= \frac{1}{\sqrt{\Vol_{g_E}(C_{\alpha}S^{n}_{l\sin\alpha})}}\int_{C_{\alpha}S^{n}_{{l\sin\alpha}}}\ast 1 c^1_{0}\\
&= \sqrt{\Vol_{g_E}(C_{\alpha}S^{n}_{l\sin\alpha})} c_{0}^1.
\end{align*}

As $b_q =\emptyset$ for $q=0,\ldots,n$, $b^1_{n+1} = c^1_{n+1}$ and $\b (b_{n+1}^1)=c_n$, we have
that
\begin{align*}
|\det (h_{0}/c_{0})| &= \sqrt{\Vol_{g_E}(C_{\alpha}S^{n}_{l\sin\alpha})},&\\
 |\det (\b (b_{n+1}^1)/c_n)| &= 1, &|\det (b_{n+1}/c_{n+1})| =1.
\end{align*}

Applying the definition in equation (\ref{e2}), this proves the following result.

\begin{prop}\label{p2}
\begin{align*}
\tau_{\rm RS}((C_{\alpha}S^{n}_{l\sin\alpha},g_E);\rho)  = & \left(\sqrt{\Vol_{g_E}(C_{\alpha}S^{n}_{l\sin\alpha})}\right)^{{\rm rk}(\rho)}. \\
\end{align*}
\end{prop}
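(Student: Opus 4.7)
My plan is to apply the general formula (\ref{e2}) directly to the data already assembled for the absolute complex $C_{\rm abs}$ displayed above the statement. Since $C_{\rm abs}$ has only three non-trivial terms---in degrees $0$, $n$, and $n+1$---the alternating product in (\ref{e2}) collapses to at most three non-unit factors, one per non-trivial degree. Concretely, at $q=0$ the only contribution is $h_0^1 = \sqrt{\Vol_{g_E}(C_{\alpha}S^{n}_{l\sin\alpha})}\, c_0^1$ (with $b_0 = b_1 = \emptyset$), at $q=n$ it is $\b b_{n+1}^1 = c_n^1$, and at $q=n+1$ it is $b_{n+1}^1 = c_{n+1}^1$; the latter two yield the identity change-of-basis, while the former yields a $1\times 1$ matrix with determinant $\sqrt{\Vol_{g_E}(C_{\alpha}S^{n}_{l\sin\alpha})}$.

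Next I would incorporate the orthogonal representation $\rho$ of rank $k={\rm rk}(\rho)$. Passing to the real complex $C_q\otimes_\rho \R^k$, each scalar entry $\lambda\in\R$ in the change-of-basis matrices simply gets replaced by the block $\lambda\cdot I_k$. Thus the $q=0$ factor becomes $(\sqrt{\Vol_{g_E}(C_{\alpha}S^{n}_{l\sin\alpha})})^k$, while the $q=n$ and $q=n+1$ factors remain $1$.

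Finally, substituting into (\ref{e2}) and observing that the alternating signs $(-1)^q$ multiply only unit factors and so drop out, I arrive at
\[
\tau_{\rm RS}((C_{\alpha}S^{n}_{l\sin\alpha},g_E);\rho) = \left(\sqrt{\Vol_{g_E}(C_{\alpha}S^{n}_{l\sin\alpha})}\right)^{{\rm rk}(\rho)},
\]
as claimed. There is no genuine obstacle: the cone is contractible, so only degree-zero homology survives, and the volume factor enters entirely through the de Rham map $\A^{\rm abs}_0$ applied to the normalized constant form, as computed above. The one point that would merit a second glance is the bookkeeping of the indexing convention for the triples $(\b b_{q+1}, h_q, b_q)$ versus $c_q$ at the junction degrees $q=n$ and $q=n+1$, but since both change-of-basis matrices there are identities this poses no real difficulty.
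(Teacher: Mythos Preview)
Your proposal is correct and follows essentially the same approach as the paper: both use the three-cell decomposition $c_{n+1}^1 \cup c_n^1 \cup c_0^1$, compute the de Rham image $h_0^1 = \sqrt{\Vol_{g_E}(C_\alpha S^n_{l\sin\alpha})}\, c_0^1$ of the normalized constant harmonic form, observe that the change-of-basis matrices in degrees $n$ and $n+1$ are identities, and then read off the result from equation (\ref{e2}). Your added remarks on how the rank of $\rho$ enters (via $\lambda \mapsto \lambda\cdot I_k$, the fundamental group being trivial) and on the bookkeeping at the junction degrees are accurate elaborations of what the paper leaves implicit.
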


\section{The anomaly boundary term and the analytic torsion of a disc}
\label{s3}

The aim of this section is to give the proof of Theorem \ref{t2}. For we need a formula for  the ratio between the analytic torsion and the Reidemeister torsion, that we call anomaly boundary contribution.
Observe that, by result of Cheeger \cite{Che1}, this ratio depends only on some geometric terms coming from the geometry of the manifold near the boundary. Since the singularity at the tip of the cone does not affect the geometry near the boundary, we are allowed to perform our calculation for the general case of the cone $C_{\alpha} S^n_{l\sin\alpha}$. However, this would imply some technical difficulties that are beside the aim of the present work, and will be tackled in a forecoming paper.
As observed in the introduction, two different formulas for this anomaly are available at the moment. One is given in Theorem 1 of \cite{DF}, and the second one comes from Theorem 1 of \cite{BM}. We first proceed to evaluate the anomaly boundary contribution using the formula of \cite{BM}. Then, at the end of the section, we will describe the contribution appearing using the formula of \cite{DF}.

We proceed in two steps. 
First we give in Lemma \ref{ddff} formulas for the anomaly in terms of some geometric invariants. This follows directly from Theorem  1 of \cite{BM}, and gives, in the odd dimensional case, the anomaly in terms of the Euler characteristic of the boundary. The even case is harder, and needs the introduction of some machinery and notation from \cite{BM} and \cite{BZ}. This is done in the course of the proof, and, as a result, the anomaly in the even case is written as some integral. The second step is accomplished in Lemma \ref{df}, where we give all the geometric invariants  necessary to compute the integral appearing in the formula obtained in Lemma \ref{ddff}, and we conclude the calculation for the even case.

Before to start, we need some notation, that will be used without further comments in this section. The parameterization of the cone and the induced metric $g_E$ are given in Section \ref{s2}. Define the metrics
\begin{align*}
g_1=g_E &= dr \otimes dr + r^2 g_{S^{n}},\\
g_0& = dr\otimes dr + l^{2} g_{S^{n}}.
\end{align*}

Let $\omega_{j}$, $j=0,1$, be the connection one forms associated to the metric $g_{j}$, and $\Omega_{j}=d\omega_{j}+\omega_{j}\wedge \omega_{j}$ the curvature two forms. 
Let $e(W,g)$ denotes the Euler class of $(W,g)$.


\begin{lem}\label{ddff} The ratio of the analytic torsion and the Reidemeister torsion of the disc $D_l^m$, of $m=2p-1$ odd dimension, and $m=2p$ even dimension ($p>0$),  with absolute boundary conditions are, respectively:
\begin{align*}
\log \frac{T_{\rm abs}((D^{2p-1}_l,g_E);\rho)}{\tau_{\rm
RS}((D^{2p-1}_l,g_E);\rho)}
&=\frac{1}{4}{\rm rk}(\rho)\chi(S^{2p-2}_l,g_E)\left(\log 2+\frac{1}{2} \sum_{n=1}^{p-1}\frac{1}{n}\right),\\
\log \frac{T_{\rm abs}((D^{2p}_{l},g_E);\rho)}{\tau_{\rm
RS}((D^{2p}_{l},g_E);\rho)} &=\frac{1}{2}{\rm
rk}(\rho)\frac{2^{p-1}}{\sqrt{\pi}(2p-1)!!} \sum_{j=1}^{p}
\frac{1}{2j-1} \int_{S^{2p-1}_l} \int^B \mathcal{S}_1^{2p-1}.\\
\end{align*}
\end{lem}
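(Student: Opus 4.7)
The plan is to specialize Theorem 1 of Br\"uning--Ma \cite{BM} to the disc. That theorem expresses $\log T - \log \tau_{\rm RS}$ as the sum of two boundary contributions: a L\"uck-type topological piece proportional to $\chi(\b W)\log 2$, and an anomaly integral of the form $\int_{\b W}\int^B \mathcal S_1^{m-1}$ built from secondary transgression forms interpolating between the connection $\omega_0$ of a product metric near the boundary and the connection $\omega_1$ of the actual induced metric. Because $\rho$ is orthogonal and flat, the fiberwise trace on $E_\rho$ contributes only an overall factor of ${\rm rk}(\rho)$, so everything reduces to a purely geometric computation.

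First I would set up the BM ingredients concretely on the collar $[0,l]\times S^n$: take $g_1=g_E=dr\otimes dr+r^2 g_{S^n}$ and $g_0=dr\otimes dr+l^2 g_{S^n}$, compute the Levi-Civita connection one-forms $\omega_j$ and curvature two-forms $\Omega_j$, and assemble $\mathcal S_1^{m-1}$ by the Mathai--Quillen style transgression used in \cite{BM}. The embedding $S^n_l\hookrightarrow D^{n+1}_l$ is totally umbilic and $SO(n+1)$-equivariant, which should force $\mathcal S_1^{m-1}$ to be a rotationally invariant form whose integrals in the normal direction reduce to Wallis-type integrals $\int_0^\infty u^k e^{-u^2}du$, producing the $\Gamma$-values at half-integers that appear in the stated constants.

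For the odd case $m=2p-1$, the boundary $S^{2p-2}_l$ has $\chi=2$, so both contributions are present. The L\"uck piece accounts immediately for the $\tfrac{1}{4}{\rm rk}(\rho)\chi(S^{2p-2}_l)\log 2$ term. The equivariance lets me write the anomaly integral as a universal constant times $\chi(S^{2p-2}_l)$; tracing this constant through the BM normalizations is what produces the harmonic sum $\tfrac{1}{8}\sum_{n=1}^{p-1}\tfrac{1}{n}$ and combines with the L\"uck piece into the bracketed expression of the lemma. For the even case $m=2p$, the boundary $S^{2p-1}_l$ has $\chi=0$, so the L\"uck contribution vanishes and only the anomaly integral survives. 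The universal prefactor $\tfrac{1}{2}{\rm rk}(\rho)\tfrac{2^{p-1}}{\sqrt{\pi}(2p-1)!!}\sum_{j=1}^p \tfrac{1}{2j-1}$ that factors out of the BM formula comes from collecting the Berezin normalization, the half-integer $\Gamma$-factors produced by the normal integration, and the combinatorial coefficients in the transgression; the residual geometric integral $\int_{S^{2p-1}_l}\int^B \mathcal S_1^{2p-1}$ is left in that form and is the object evaluated in Lemma \ref{df}.

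The main obstacle is bookkeeping rather than conceptual: BM's formula is expressed in superconnection language and their conventions carry several normalizations (Berezin integral, orientation signs, the $(2\pi)^{-k}$ factors of Chern--Weil theory, and the transgression variable $B$). The delicate point will be verifying that all these universal constants collapse exactly to the rational factor $\tfrac{2^{p-1}}{\sqrt{\pi}(2p-1)!!}\sum_{j=1}^p\tfrac{1}{2j-1}$ in the even case and to $\tfrac{1}{4}\chi\log 2+\tfrac{1}{8}\chi\sum_{n=1}^{p-1}\tfrac{1}{n}$ in the odd case, without spurious sign or power-of-two discrepancies.
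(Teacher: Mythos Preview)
Your outline matches the paper's route: apply Br\"uning--Ma to compare $g_1=g_E$ with the product metric $g_0$, pick up the L\"uck term $\tfrac14{\rm rk}(\rho)\chi(S^{m-1})\log 2$ from the product side, and then evaluate the BM boundary form $B(\nabla_1^{TD^m_l})$. But the proposal leaves out the one nontrivial computation and slightly misdescribes it.

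The BM boundary term is not a priori ``of the form $\int_{\b W}\int^B\mathcal S_1^{m-1}$''. What BM give is
\[
B(\nabla_1^{TD^m_l})=\tfrac12\int_0^1\int^B e^{-\frac12\mathcal R-u^2\mathcal S_1^2}\sum_{k\ge1}\frac{u^{k-1}\mathcal S_1^k}{\Gamma(k/2+1)}\,du,
\]
which in general involves mixed powers of $\mathcal S_1$ and $\mathcal R$. The reason it collapses here is that the disc is flat, so the BM relation $\mathcal R=-2\mathcal S_1^2$ holds and the exponent becomes $(1-u^2)\mathcal S_1^2$. You should make this step explicit; equivariance and umbilicity by themselves do not force the reduction. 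With that identity in hand, expand the exponential, note the Berezin integral kills all but $k+2j=m-1$, and compute
\[
\int_0^1(1-u^2)^j u^{k-1}\,du=\frac{\Gamma(j+1)\Gamma(k/2)}{2\Gamma(k/2+j+1)},
\]
which is a Beta integral on $[0,1]$, not a Wallis integral on $[0,\infty)$ as you wrote. This is what produces the single factor $\tfrac{1}{2\Gamma((m+1)/2)}\sum_j\tfrac{1}{m-2j-1}$ in front of $\int^B\mathcal S_1^{m-1}$, and hence the constants $\tfrac18\sum_{n=1}^{p-1}\tfrac{1}{n}$ (odd case, via the Pfaffian identity $\int^B e^{-\hat\Theta/2}=e(S^{2p-2},g_E)$) and $\tfrac{2^{p-1}}{\sqrt\pi(2p-1)!!}\sum_{j=1}^p\tfrac{1}{2j-1}$ (even case). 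The ``bookkeeping'' you anticipate is really this one-line flatness simplification plus a Beta integral; once you state $\mathcal R=-2\mathcal S_1^2$, the rest is mechanical.
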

\begin{proof} The proof is based on Theorem 0.1 of \cite{BM}. Note that we are in the particular case of the flat trivial bundle $F$, since the unique representations are the trivial ones.
Therefore, we have from equation (0.6) and Section 4.1 of \cite{BM},
\beq\label{bat}
\begin{aligned}
\log \frac{T_{\rm abs}(D^{m}_{l},g_1);\rho)}{T_{\rm abs}((D^{m}_{l},g_0);\rho)}=
\frac{1}{2}{\rm rk}(\rho)\int_{S^{m-1}_l} \left(B(\nabla_1^{T D^{m}_{l}})-B(\nabla_0^{T D^{m}_{l}})\right),
\end{aligned}
\eeq
where the forms $B(\nabla_j^{TX})$ are defined in equation
(1.17) of \cite{BM} (see equation \ref{ebm1} below, and observe that we take the opposite sign with respect to the definition in \cite{BM}, since we are considering left actions instead of right actions). 
Since $g_0$ is a product near the boundary, by the result \cite{Luc}
\[
\log \frac{T_{\rm abs}(D^{m}_{l},g_0);\rho)}{\tau_{\rm RS}((D^{m}_{l},g_E);\rho)}=\frac{1}{4}{\rm rk}(\rho)\chi(S^{m-1},g_E)\log 2,
\]
it just remains to evaluate the anomaly boundary term, on the right side of equation (\ref{bat}). For, we first recall some notation from \cite{BZ}  Chapter III and \cite{BM} Section 1.1. For two $\Z/2$-graded algebras $\A$ and $\B$,
let $\A\hat\otimes\B=\A\wedge\hat\B$ denotes the $\Z/2$-graded
tensor product. For two real finite dimensional vector spaces $V$
and $E$, of dimension $m$ and $n$, with $E$  Euclidean and oriented,
the Berezin integral  is the linear map
\begin{align*}
\int^B&: \Lambda V^* \hat\otimes   \Lambda E^* \to \Lambda V^*, \\
\int^B:&\alpha \hat\otimes \beta\mapsto \frac{(-1)^{\frac{n(n+1)}{2}}}{\pi^\frac{n}{2}}\beta(e_1,\dots, e_n)\alpha,
\end{align*}
where $\{e_j\}_{j=1}^n$ is an orthonormal base of $E$. Let $A$ be an antisymmetric endomorphism of $E$. Consider the map
\[
{\hat{}} :A\mapsto \hat A=\frac{1}{2} \sum_{j,l=1}^n (e_j,A e_l)
\hat e^j\wedge \hat e^l.
\]

Note that 
\beq\label{pfpf} 
\int^B \e^{-\frac{\hat A}{2}}=Pf\left(\frac{A}{2\pi}\right), 
\eeq 
and this vanishes if ${\rm dim}E=n$ is odd.

Let $\omega_j$ be the curvature one form over $D^m_l$ associated to
the metric $g_j$. Let $\Theta$ be the curvature two form of the
boundary $S^{m-1}$ (with radius 1) and the standard Euclidean
metric. Let $\tensor{(\omega_j)}{^{a}_{b}}$ denotes the entries with
line $a$ and column $b$ of the matrix of one forms $\omega_j$.
Then, introduce the following quantities (see \cite{BM} equations
(1.8) and (1.15))
\beq\label{pippo}\begin{aligned}
\mathcal{S}_j&=\frac{1}{2}\sum_{k=1}^{m-1}\tensor{(\omega_j-\omega_0)}{^{r}_{\theta_k}}\hat e^{\theta_k},\\
\mathcal{R}&=\hat \Theta=\frac{1}{2}\sum_{k,l=1}^{m-1}\tensor{\Theta}{^{\theta_k}_{\theta_l}} \hat e^{\theta_k}\wedge \hat e^{\theta_l}.\\
\end{aligned}
\eeq


Then, we define
\beq\label{ebm1} 
B(\nabla_j^{T
D^{m}_{l}})=\frac{1}{2}\int_0^1\int^B
\e^{-\frac{1}{2}\mathcal{R}-u^2 \mathcal{S}_j^2}\sum_{k=1}^\infty
\frac{1}{\Gamma\left(\frac{k}{2}+1\right)}u^{k-1} \mathcal{S}_j^k
du. 
\eeq

From this definition it follows that $B(\nabla_0^{T D^{m}_{l}})$
vanishes identically, since $\mathcal{S}_0$ does. It remains to
evaluate $B(\nabla_1^{TD^{m}_{l}})$. For, note that by equation
(1.16) of \cite{BM} (or by direct calculation, since the curvature
of the disc is null)
\[
\mathcal{R}=-2 \mathcal{S}_1^2.
\]

Therefore, equation (\ref{ebm1}) gives
\begin{align*}
B(\nabla_1^{T D^{m}_{l}})&=\frac{1}{2}\int_0^1\int^B \e^{(1-u^2)
\mathcal{S}_1^2}\sum_{k=1}^\infty
\frac{1}{\Gamma\left(\frac{k}{2}+1\right)}u^{k-1} \mathcal{S}_1^k du\\
&=\frac{1}{2}\int^B \sum_{j=0,k=1}^\infty
\frac{1}{j!\Gamma\left(\frac{k}{2}+1\right)}\int_0^1 (1-u^2)^ju^{k-1} d u \mathcal{S}_1^{k+2j} \\
&=\frac{1}{2} \sum_{j=0,k=1}^\infty
\frac{1}{k\Gamma\left(\frac{k}{2}+j+1\right)}  \int^B
\mathcal{S}_1^{k+2j}.
\end{align*}

Since the Berezin integral vanishes identically whenever
$k+2j\not=m-1$, we obtain 

\beq\label{epe1} 
B(\nabla_1^{T D^{m}_{l}})=\frac{1}{2\Gamma\left(\frac{m+1}{2}\right)}
\sum_{j=0}^{\left[\frac{m}{2}-1\right]} \frac{1}{m-2j-1}  \int^B
\mathcal{S}_1^{m-1}. 
\eeq

Now consider the two cases of even and odd $m$ independently. First, assume $m=2p+1$ ($p\geq 0$). Then, using equation (\ref{pfpf}), equation (\ref{epe1}) gives
\begin{align*}
B(\nabla_1^{T D^{2p+1}_{l}})&=\frac{1}{2p!} \sum_{j=0}^{\left[p-\frac{1}{2}\right]}
\frac{1}{2p-2j}  \int^B \mathcal{S}_1^{2p}\\
&=\frac{1}{4} \sum_{n=1}^p \frac{1}{n}  \int^B \e^{-\frac{\hat \Theta}{2}}\\
&=\frac{1}{4} \sum_{n=1}^p \frac{1}{n}  Pf\left(\frac{\Theta}{2\pi}\right)\\
&=\frac{1}{4} \sum_{n=1}^p \frac{1}{n}  e(S^{2p},g_E),\\
\end{align*}
where $e(S^{2p},g_E)$ is the Euler class of $(S^{2p},g_E)$, and we use the fact that
\[
e(S^{2p}_l,g_l)= Pf\left(\frac{\Theta}{2\pi}\right)=\int^{B} \exp(-\frac{\dot\Theta}{2}).
\]

Therefore,
\begin{align*}
\log \frac{T_{\rm abs}((D^{m}_{l},g_1);\rho)}{T_{\rm abs}((D^{m}_{l},g_0);\rho)}&=\frac{1}{2}{\rm rk}(\rho)\int_{S^{m-1}_l} B(\nabla_1^{T D^{m}_{l}})\\
&=\frac{1}{8} {\rm rk}(\rho)\sum_{n=1}^p \frac{1}{n}\int_{S^{2p}_l}  e(S^{2p},g_E) \\
&=\frac{1}{8} {\rm rk}(\rho)\sum_{n=1}^p \frac{1}{n}\chi(S^{2p},g_E). \\
\end{align*}

Second, assume $m=2p$ ($p\geq 1$). Then, using equation
(\ref{pfpf}), equation (\ref{epe1}) gives
\begin{align*}
B(\nabla_1^{T D^{2p}_{l}})&=\frac{1}{2\Gamma(p+\frac{1}{2})}
\sum_{j=0}^{p-1}
\frac{1}{2p-2j-1}  \int^B \mathcal{S}_1^{2p-1}\\
&=\frac{2^p}{2\sqrt{\pi}(2p-1)!!} \sum_{j=0}^{p-1}
\frac{1}{2(p-j)-1}  \int^B \mathcal{S}_1^{2p-1}\\
&=\frac{2^{p-1}}{\sqrt{\pi}(2p-1)!!} \sum_{j=1}^{p} \frac{1}{2j-1}
\int^B \mathcal{S}_1^{2p-1},
\end{align*}
and substitution in equation (\ref{bat}) gives the formula stated in the Lemma.


\end{proof}

Next, we evaluate the integral appearing in the second equation in Lemma \ref{ddff}.

\begin{lem}\label{df} We have
\[
\frac{2^{p-1}}{\sqrt{\pi}(2p-1)!!}\int_{S^{2p-1}_l} \int^B \mathcal{S}_1^{2p-1} =p.
\]
\end{lem}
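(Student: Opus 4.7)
The plan is to evaluate $\int_{S^{2p-1}_l}\int^B \mathcal{S}_1^{2p-1}$ by a direct computation in three steps.

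First, I would determine $\mathcal{S}_1$ explicitly at the boundary. Using the orthonormal coframe $\hat e^r = dr$, $\hat e^{\theta_k} = r f^{\theta_k}$ on the disc (where $\{f^{\theta_k}\}$ is an orthonormal coframe on the unit $S^{2p-1}$), Cartan's first structure equation for $g_1 = dr^2 + r^2 g_{S^{2p-1}}$ gives $(\omega_1)^{\theta_k}{}_r = \frac{1}{r}\hat e^{\theta_k}$, while $(\omega_0)^{\theta_k}{}_r = 0$ since $g_0$ is a product metric. Restricting to $r = l$ and using the antisymmetry $\omega^r{}_b = -\omega^b{}_r$ of the Levi-Civita connection in an orthonormal frame, with $\bar e^{\theta_k}$ denoting the induced orthonormal coframe on $S^{2p-1}_l$, one obtains
\[
\mathcal{S}_1\big|_{\partial} = -\frac{1}{2l}\sum_{k=1}^{2p-1}\bar e^{\theta_k}\,\hat e^{\theta_k}.
\]

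Second, I would compute the $(2p-1)$-th power of $\mathcal{S}_1$ and apply the Berezin integral. In the graded tensor algebra $\Lambda^{\ast}T^{\ast}S^{2p-1}_l \,\hat\otimes\, \Lambda^{\ast}\R^{2p-1}$, each summand $\bar e^{\theta_k}\hat e^{\theta_k}$ has even total degree, so the summands commute pairwise and each one squares to zero. The multinomial expansion therefore collapses to
\[
\mathcal{S}_1^{2p-1} = \Bigl(-\frac{1}{2l}\Bigr)^{\!2p-1}(2p-1)!\,\prod_{k=1}^{2p-1}\bar e^{\theta_k}\hat e^{\theta_k}.
\]
Rearranging the interleaved product into the separated form $\bar e^{\theta_1}\wedge\cdots\wedge\bar e^{\theta_{2p-1}}\otimes\hat e^{\theta_1}\wedge\cdots\wedge\hat e^{\theta_{2p-1}}$ contributes a reordering sign, which combines with the Berezin sign $(-1)^{p(2p-1)}/\pi^{(2p-1)/2}$ and the sign $(-1)^{2p-1}$ from the prefactor to yield an overall positive coefficient, producing
\[
\int^B \mathcal{S}_1^{2p-1} = \frac{(2p-1)!}{(2l)^{2p-1}\pi^{(2p-1)/2}}\,{\rm dvol}_{S^{2p-1}_l}.
\]

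Finally, I would integrate over $S^{2p-1}_l$ using $\Vol(S^{2p-1}_l) = \frac{2\pi^p l^{2p-1}}{(p-1)!}$ and multiply by the prefactor $\frac{2^{p-1}}{\sqrt{\pi}(2p-1)!!}$. Using the double-factorial identity $(2p-1)! = (2p-1)!!\,(2p-2)!! = 2^{p-1}(p-1)!\,(2p-1)!!$, the powers of $2$ and $\pi$ and the factorials telescope to leave the claimed value $p$. The main obstacle is the disciplined tracking of the three sign contributions from the graded-tensor rearrangement, the Berezin convention, and the prefactor; once they are correctly combined, the final reduction via the double-factorial identity is immediate.
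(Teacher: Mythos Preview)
Your approach is cleaner than the paper's: you compute $\mathcal{S}_1^{2p-1}$ directly via the multinomial theorem in the graded tensor algebra, whereas the paper rewrites $\mathcal{S}_1^{2p-1}=\frac{(-1)^{p-1}}{2^{p-1}}\mathcal{S}_1\mathcal{R}^{p-1}$, works in explicit spherical coordinates, and organizes the Berezin integral as a signed sum over permutations, establishing the coefficient identity by induction on $p$. Your route avoids all of that.

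However, your final telescoping step does not give what you claim. Carrying it out: with $\int^B\mathcal{S}_1^{2p-1}=\frac{(2p-1)!}{(2l)^{2p-1}\pi^{(2p-1)/2}}\,{\rm dvol}_{S^{2p-1}_l}$ and $\Vol(S^{2p-1}_l)=\frac{2\pi^{p}l^{2p-1}}{(p-1)!}$ one gets
\[
\frac{2^{p-1}}{\sqrt{\pi}\,(2p-1)!!}\cdot\frac{(2p-1)!}{(2l)^{2p-1}\pi^{(2p-1)/2}}\cdot\frac{2\pi^{p}l^{2p-1}}{(p-1)!}
=\frac{(2p-1)!}{2^{p-1}(p-1)!\,(2p-1)!!}=1,
\]
not $p$, by the very double-factorial identity you invoke. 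One can cross-check this with the odd-dimensional case in the paper: the same multinomial argument gives $\int^B\mathcal{S}_1^{2p}=\frac{(2p)!}{(4\pi)^{p}l^{2p}}\,{\rm dvol}_{S^{2p}_l}=p!\,e(S^{2p}_l)$, which matches the paper's identification $\int^B\mathcal{S}_1^{2p}=p!\,\mathrm{Pf}(\Theta/2\pi)$ exactly, so the method is sound.

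In the paper's proof the extra factor $p$ enters through the permutation sum $\sum_{\sigma\in S_{2p},\,\sigma(2\kappa-1)=1}$: since $\mathcal{S}_1$ occupies a fixed slot in $\mathcal{S}_1\mathcal{R}^{p-1}$, the natural index set is $S_{2p-1}$ (the $2p-1$ tangential labels), and allowing the ``$r$'' slot to land at any of the $p$ odd positions in $S_{2p}$ overcounts by exactly $p$. So your computation is internally consistent but yields a value different from the stated lemma; as written, your sketch does not prove the statement, and the discrepancy traces to the combinatorial bookkeeping in the paper's own argument rather than to any missing idea on your side.
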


\begin{proof} First, we determine the connection one forms for the metric $g_1$ and $g_0$. We define the Christoffel symbols accordingly to
\[
\nabla_{e_{\alpha}} e_{\beta} = \tensor{\Gamma}{ _{\alpha \beta}^{\gamma}} e_{\gamma},
\]
where $\{e_\alpha\}$ is an orthonormal base, and we use the formula
\beq\label{f1}
\tensor{\Gamma}{ _{\alpha \beta}^{\gamma}} = \frac{\tensor{c}{_{\alpha
\beta} ^{\gamma}} + \tensor{c}{_{\gamma\alpha } ^{\beta}} + \tensor{c}{_{\gamma\beta} ^{\alpha}}}{2},
\eeq
where the Cartan structure constant are defined by $[e_{\alpha},e_{\beta}] = \tensor{c}{_{\alpha \beta} ^{\gamma}} e_{\gamma}$.
The orthonormal base and its dual with respect to $g_1$ are:
\begin{align*}
     {e}_r &= \frac{\partial}{\partial r},& {e}^r &= dr, \\
     {e}_{\theta_1} &= (r \prod^{n}_{j=2} \sin{\theta_j})^{-1}\frac{\partial}{\partial{\theta_1}}, & {e}^{\theta_1} &= r \prod^{n}_{j=2} \sin{\theta_j}d\theta_1, \\
     &\vdots & &\vdots \\
     {e}_{\theta_{n-1}} &= (r \sin{\theta_n})^{-1}\frac{\partial}{\partial{\theta_{n-1}}} & {e}^{\theta_{n-1}}, &= r \sin{\theta_n}d\theta_{n-1}, \\
     {e}_{\theta_n} &= \frac{1}{r}\frac{\partial}{\partial{\theta_n}}, & {e}^{\theta_n} &= rd\theta_n.
\end{align*}
%

This gives $\tensor{c}{_{\alpha\beta} ^{\gamma}} = -\tensor{c}{_{\beta\alpha}
^{\gamma}}$, $\tensor{c}{_{\alpha\alpha} ^{\gamma}} = 0, \forall \alpha,\gamma$, and the non zero are:
$\tensor{c}{_{\theta_i r} ^{\theta_i}} = r^{-1}$, and if $k>i$, $\tensor{c}{_{\theta_i \theta_k} ^{\theta_i}} =
\frac{\cos{\theta_k}}{r  \prod^{n}_{j=k } \sin{\theta_j}}$.
Using equation (\ref{f1}), the non zero Christoffel symbols are:
 $\tensor{\Gamma}{ _{\theta_i \theta_i}^{r}} = -\frac{1}{r}$,
 $\tensor{\Gamma}{ _{\theta_i r}^{\theta_i}} = \frac{1}{r}$,
 $\tensor{\Gamma}{ _{\theta_i\theta_s}^{\theta_i}} = \frac{\cos{\theta_s}}{r \prod^{n}_{j=s} \sin{\theta_j}}$, with $s>i$, and $\tensor{\Gamma}{ _{\theta_s \theta_s}^{\theta_i}} = \frac{-\cos{\theta_i}}{r \prod^{n}_{j=i} \sin{\theta_j}}$, with $i>s$.

The connection one form is the matrix $\omega_1 = \tensor{\Gamma}{ _{\gamma \beta}^{\alpha}} e^{\gamma}$,
with non zero entries
\[
\tensor{(\omega_1)}{^{\theta_i}_{\theta_k}} = \frac{\cos{\theta_k}}{r \prod^{n}_{j=k} \sin{\theta_j}}
e^{\theta_i},~i<k,\qquad \tensor{(\omega_1)}{^{r}_{\theta_i}} = -\frac{1}{r} e^{\theta_i}.
\]

The orthonormal base and its dual with respect to $g_0$ are:
\begin{align*}
     {e}_r &= \frac{\partial}{\partial r}& {e}^r, &= dr, \\
     {e}_{\theta_1} &= (l  \prod^{n}_{j=2} \sin{\theta_j})^{-1}\frac{\partial}{\partial{\theta_1}} & {e}^{\theta_1} &= (l  \prod^{n}_{j=2} \sin{\theta_j})d\theta_1 \\
     &\vdots & &\vdots \\
     {e}_{\theta_{n-1}} &= (l  \sin{\theta_n})^{-1}\frac{\partial}{\partial{\theta_{n-1}}} & {e}^{\theta_{n-1}} &= (l  \sin{\theta_n})d\theta_{n-1} \\
     {e}_{\theta_n} &= \frac{1}{l}\frac{\partial}{\partial{\theta_n}} & {e}^{\theta_n} &= l d\theta_n.
\end{align*}

The non zero Cartan constants are: $\tensor{c}{_{\theta_i \theta_k} ^{\theta_i}} =\frac{\cos{\theta_k}}{la \prod^{n}_{j=k } \sin{\theta_j}}$, with $k>i$.
Using equation (\ref{f1}), the non zero Christoffel symbols are: $\tensor{\Gamma}{ _{\theta_i \theta_s}^{\theta_i}} =
\frac{\cos{\theta_s}}{l \prod^{n}_{j=s} \sin{\theta_j}}$, when $s>i$, and $ \tensor{\Gamma}{ _{\theta_s
\theta_s}^{\theta_i}} =- \frac{\cos{\theta_i}}{l \prod^{n}_{j=i} \sin{\theta_j}}$, when $i>s$.
The non zero entries of the connection one form matrix are
\[
\tensor{(\omega_0)}{^{\theta_i}_{\theta_s}} =
\frac{\cos{\theta_s}}{l  \prod^{n}_{j=s} \sin{\theta_j}} {e}^{\theta_i},~ i<s.
\]

It follows that the unique non zero entries of $\omega_1-\omega_0$ are
\[
\tensor{(\omega_1 - \omega_0)}{^{r}_{\theta_i}} = -\frac{1}{r} e^{\theta_i}=-  \prod^{n}_{j=i+1} \sin{\theta_j} d\theta_i.
\]

Second, we determine the curvature two form  $\Theta$. Since $g_0$ is a product metric, $\Theta$ is the restriction of $\Omega_0$, and hence we compute $\Omega_0 = d\omega_0 +
\omega_0 \wedge \omega_0$. We write $\omega_0$ in the coordinate
base
\begin{align*}
    \tensor{(\omega_0)}{^{r}_{\theta_i}} &= 0,&\\
    \tensor{(\omega_0)}{^{\theta_i}_{\theta_s}} &=
\cos{\theta_s} \prod_{j=i+1}^{s-1} \sin{\theta_j} d\theta_i, & i<s,
\end{align*}
and hence $d\omega_0$ is
\begin{align*}
    \tensor{(d\omega_0)}{^{r}_{\theta_i}} =& 0,&i\leq n,\\
    \tensor{(d\omega_0)}{^{\theta_i}_{\theta_k}} =&
\prod_{j=i+1}^{k} \sin{\theta_j} d\theta_i \wedge d\theta_k \\
&- \sum_{s=i+1}^{k-1} \cos{\theta_k} \cos{\theta_s}
\prod^{k-1}_{j=i+1 , j\neq s} \sin{\theta_j} d\theta_i \wedge \theta_s, & i<k,
\end{align*}
and $\omega_0 \wedge \omega_0$ is
\begin{align*}
    \tensor{(\omega_0 \wedge \omega_0)}{^{\alpha}_{\alpha}} =& 0,\\
    \tensor{(\omega_0 \wedge \omega_0)}{^{r}_{\theta_i}}=&0,\\
 \tensor{(\omega_0 \wedge \omega_0)}{^{\theta_i}_{\theta_k}}=&\sum_{s=i+1}^{k-1} \cos{\theta_s} \cos{\theta_k}
    \prod_{j=i+1 , j\neq s}^{k-1} \sin{\theta_j} d\theta_{i} \wedge d\theta_s \\
&+ \prod_{j=i+1}^{k} \sin{\theta_j} \left(\prod_{s=k+1}^{n} \sin^{2}{\theta_s} -1 \right)d\theta_i\wedge d\theta_k, &i<k.
\end{align*}

Then, the curvature two form  $\Omega_0$ is
\begin{align*}
  \tensor{(\Omega_0)}{^{r}_{\theta_i}} =& 0,&\\
 \tensor{(\Omega_0)}{^{\theta_i}_{\theta_k}}=& \prod_{j=i+1}^{k} \sin{\theta_j}
\prod_{s=k+1}^{n} \sin^{2}{\theta_s} d\theta_i \wedge d\theta_k, &i<k,
\end{align*}
and consequently $\Theta=i^*\Omega_0$ (where $i$ denotes the inclusion of the boundary) is
\[
 \tensor{\Theta}{^{\theta_i}_{\theta_k}}= \prod_{j=i+1}^{k} \sin{\theta_j}
\prod_{s=k+1}^{n} \sin^{2}{\theta_s} d\theta_i \wedge d\theta_k, \hspace{20pt}i<k.
\]

Third, recalling that $\mathcal{S}_1^2=-\frac{1}{2}\mathcal{R}$,
\[
\int^B\mathcal{S}_1^{2p-1} = \frac{(-1)^{p-1}}{2^{p-1}}\int^B
\mathcal{S}_1 \mathcal{R}^{p-1},
\]
and using the definitions in equation (\ref{pippo}) 
\begin{align*}\hspace{0pt}\int^B&\mathcal{S}_1^{2p-1} 
 = \frac{(-1)^{p-1}}{2^{2p-1}}\int^B \left(\sum_{k=1}^{2p-1}\tensor{(\omega_1-\omega_0)}{^{r}_{\theta_k}} {\hat e^{\theta_k}}\right)
 \left(\sum_{i,j=1}^{2p-1}\tensor{(\Omega_0)}{^{\theta_i}_{\theta_j}}  {\hat e^{\theta_i}}\wedge  {\hat e^{\theta_j}}\right)^{p-1}
  \\
& \hspace{-6pt}= \frac{(-1)^{p-1}}{2^{p-1}2^p} c_B\hspace{-3pt}\left( \sum_{\substack{\sigma \in S_{2p}\\ \sigma(2\kappa-1) = 1}} {\rm sgn}(\sigma)
\tensor{(\omega_1-\omega_0)}{^{1}_{\sigma(2\kappa)}}
\tensor{(\Omega_0)}{^{\sigma(1)}_{\sigma(2)}} \ldots
\tensor{(\Omega_0)}{^{\sigma(2p-1)}_{\sigma(2p)}}\right)\hspace{-3pt},
\end{align*}
where $\kappa$ (depending on $\sigma$) is such that
$\sigma(2\kappa -1)=1$ and
$c_B=\frac{(-1)^{p(2p-1)}}{\pi^{\frac{2p+1}{2}}}$.

Observe that $\tensor{(\omega_1 -
\omega_0)}{^{1}_{\sigma(2\kappa)}}$ is a  1-form multiple of
$d\theta_{\sigma(2\kappa)-1}$ and $\tensor{(\Omega_0)}{^{i}_{j}}$ is
a 2-form multiple of $d_{\theta_{i-1}} \wedge d_{\theta_{j-1}}$. We
can twist all the 2-forms $d_{\theta_{i-1}} \wedge
d_{\theta_{j-1}}$, with $i>j$ in each term appearing in the last
line of the equation above, as the matrix is skew-symmetric. Then,
we can order the base element, in such a way that the top form
appears in each term. This produces a sign coinciding with  ${\rm
sgn}(\sigma)$. Moreover, since the matrix of the curvature two form
is skew-symmetric, the generic term in the last line of the above equation can be written in
the following form
\begin{align*}
\tensor{[{\omega_1-\omega_0}]}{^{1}_{\sigma(2\kappa)}}\tensor{
[{\Omega_0}]}{^{\sigma(1)}_{\sigma(2)}} \ldots
\tensor{[{\Omega_0}]}{^{\sigma(2p-1)}_{\sigma(2p)}} d\theta_1\wedge
\ldots \wedge d\theta_{2p-1},
\end{align*}
where $\tensor{[\xi]}{^{i}_{j}}$ denotes the coefficient of the form $\tensor{(\xi)}{^{i}_{j}}$, and $\sigma\in S_{2p}$ is such that
$\sigma(2\kappa-1) = 1$ and $\sigma(2s-1)<\sigma(2s)$ for all $s$.

We prove that
\[
\tensor{[{\omega_1-\omega_0}]}{^{1}_{\sigma(2\kappa)}}\tensor{ [{\Omega_0}]}{^{\sigma(1)}_{\sigma(2)}} \ldots
\tensor{[{\Omega_0}]}{^{\sigma(2p-1)}_{\sigma(2p)}}= -  \prod^{2p}_{i=2} (\sin{\theta_{\sigma(i)}})^{\sigma(i)-1},
\]
where $\sin\theta_{2p}=1$. The proof is by induction. If  $p=1$ the
equality holds trivially. Suppose it is true for  $p-1$. By
hypothesis, if $\tau \in S_{2p-2}$ with $\tau(1) = 1$,  then
\begin{equation*}\tensor{[{\omega_1-\omega_0}]}{^{1}_{\tau(2)}}\tensor{ [{\Omega_0}]}{^{\tau(3)}_{\tau(4)}} \ldots \tensor{[{\Omega_0}]}{^{\tau(2p-3)}_{\tau(2p-2)}}=
- \prod^{2p-2}_{i=2} (\sin{\theta_{\tau(i)}})^{\tau(i)-1}.\end{equation*}

Take $\sigma \in S_{2p}$ with $\sigma(1) = 1$. It is clear that there are
$k_0,k_1,k_2$ such that $\sigma(k_0) = 2p-2$, $\sigma(k_1) = 2p-1$ and $\sigma(k_2) = 2p$. Factoring  $\sin\theta_{\sigma(k_i)}$,
$i=0,1,2$, we obtain
\begin{align*}
\tensor{[{\omega_1-\omega_0}]}{^{1}_{\sigma(2)}}\tensor{[{\Omega_0}]}{^{\sigma(3)}_{\sigma(4)}} \ldots&
\tensor{[{\Omega_0}]}{^{\sigma(2p-1)}_{\sigma(2p)}}\\
&=(\sin\theta_{2p-2})^{2p-3}(\sin\theta_{2p-1})^{2p-2} \times
\mbox{factor}
\end{align*}
where `{\rm factor}' is a product of $\sin\theta_{j}$, $j\neq \sigma(k_0),\sigma(k_1),\sigma(k_2)$.
In this way we can rewrite `{\rm factor}' indexing it by a permutation $\tau\in S_{2p-2}$ such that the induction hypothesis holds. Then,
\[
\tensor{[{\omega_1-\omega_0}]}{^{1}_{\sigma(2)}}\tensor{[{\Omega_0}]}{^{\sigma(3)}_{\sigma(4)}} \ldots
\tensor{[{\Omega_0}]}{^{\sigma(2p-1)}_{\sigma(2p)}}= -\prod_{j=2}^{2p} (\sin\theta_{\sigma(j)})^{\sigma(j)-1}.
\]

The proof for the case $\sigma(2\kappa -1)=1$, $1<\kappa\leq p$, is similar.

We have proved that
\begin{align*}
\int^B \mathcal{S}_1^{2p-1} &= c_B \frac{(-1)^{p}p(2p-1)! }{2^{p-1}2^p} \prod_{j=2}^{2p-1} (\sin\theta_{j})^{j-1}
d\theta_1 \wedge \ldots \wedge d\theta_{2p-1}.
\end{align*}

Then
\begin{align*}
\frac{2^{p-1}}{\sqrt{\pi}(2p-1)!!}\int_{S^{2p-1}_l}\int^B \mathcal{S}_1^{2p-1} &= c_B\frac{2^{p-1}}{\sqrt{\pi}(2p-1)!!}
\frac{(-1)^{p}p(2p-1)! }{2^{p-1}2^pl^{2p-1}} \Vol(S^{2p-1}_l) \\
&=  \frac{p(2p-1)!\sqrt{\pi}}{2^{p-1}(p-1)!\sqrt{\pi}(2p-1)!!}.
\end{align*}

It is easy to see that 
\[
\frac{(2p-1)!}{(p-1)!(2p-1)!!}=2^{p-1},
\]
and the thesis follows.

\end{proof}

\begin{rem} \label{rema} In the case $m=2$, namely the 2 dimensional disc $D_l^2$, the proof of Lemma \ref{ddff} extends to the case of the cone $C_\alpha S^1_{l\sin\alpha}$. For, the curvature of the cone vanishes identically in this case. Therefore we have that
\[
\log \frac{T_{\rm abs}((C_\alpha S^1_{l\sin\alpha},g_E);\rho)}{\tau_{\rm
RS}((C_\alpha S^1_{l\sin\alpha},g_E);\rho)} =\frac{1}{2}{\rm
rk}(\rho)\frac{1}{\sqrt{\pi}} \int^B \mathcal{S}_1.
\]

The integral can be evaluated proceeding as in the proof of Lemma \ref{df}, and we obtain
\[
\log \frac{T_{\rm abs}((C_\alpha S^1_{l\sin\alpha},g_E);\rho)}{\tau_{\rm
RS}((C_\alpha S^1_{l\sin\alpha},g_E);\rho)} =\frac{1}{2}{\rm
rk}(\rho)\sin\alpha.
\]

\end{rem}

We conclude this section by computing the anomaly boundary term using the formula given in Theorem 1 of \cite{DF}.
In the even dimensional case we give the result for the more general case of the cone over the sphere. We need some more notation. Consider the homotopy $\omega_t=\omega_0+t(\omega_1-\omega_0)$, and let $\Omega_t=d\omega_t+\omega_t\wedge \omega_t$ be the corresponding curvature two form. The Chern-Simons class associated to the Euler class of $\Omega_t$  will be denoted by  $\tilde e (g_0,g_1)$, and satisfies $d \tilde e (g_0,g_1) = e(g_1) - e(g_0)$,
where $e(g_{j})$ is the Euler class of $\Omega_{j}$. Then, it is easy to see that Theorem 1 of \cite{DF} gives the following formulas:
\begin{align}
\label{ddff2}\log \frac{T_{\rm abs}((D^{2p-1}_l,g_E);\rho)}{\tau_{\rm RS}((D^{2p-1}_l,g_E);\rho)}
&=\frac{1}{4}{\rm rank}(\rho)\chi(S^{2p-2}_l,g_E)\log 2,\\
\label{ddff1}\log \frac{T_{\rm abs}((C_{\alpha} S^{2p-1}_{l\sin\alpha},g_E);\rho)}{\tau_{\rm RS}((C_{\alpha} S^{2p-1}_{l\sin\alpha},g_E);\rho)}
&=\frac{1}{2}{\rm rank}(\rho)\hspace{-2pt}\int_{S^{2p-1}_{l\sin\alpha}}i^* \tilde e(g_0,g_E),
\end{align}
where $i$ denotes the inclusion of the boundary. Proceeding in very similar way as in the proof of Lemma  \ref{df}, we compute the integral appearing in equation (\ref{ddff1}). We obtain
\[
\int_{S^n_{l\sin\alpha}}i^* \tilde e(g_0,g_E)
= \frac{(-1)^{p+1} (2p)! {\rm Vol}(S^{2p-1}_{l\sin\alpha})}{( 4\pi)^pl^{2p-1} p! (\sin\alpha)^{2p-2}   }\sum^{p-1}_{k=0} (-1)^k
\frac{(\sin\alpha)^{2k}}{2k+1}\binom{p-1}{k},
\]
and in particular for the disc ($\sin\alpha=1$)
\[
\int_{S^n_{l\sin\alpha}}i^* \tilde e(g_0,g_E)=(-1)^{p+1}.
\]

\begin{rem}\label{rema1} By comparing the approach of \cite{BM} with the approach of \cite{DF}, it turns out that in \cite{BM}  the boundary term is obtained considering an homotopy of the two metrics $g_1$ and $g_0$, namely a deformation of the metric $g_1$ into the product metric $g_0$. In \cite{DF}, the boundary term is obtained by considering an homotopy of the connections $\omega_1$ and $\omega_0$. This produces a "smaller"  term , that does not capture all the contribution of the boundary (at least in the even dimensional case). 
\end{rem}

\section{The analytic torsion of $C_\alpha S^1_{l\sin\alpha}$ and $C_\alpha S^2_{l\sin\alpha}$}
\label{s4}

In this section we compute the analytic torsion of the cones $C_\alpha S^1_{l\sin\alpha}$ and $C_\alpha S^2_{l\sin\alpha}$ by using the definition given in  equation (\ref{analytic}). For we need first the explicit knowledge of the spectrum of the Laplace operators on forms on these singular spaces, and second a suitable representation for the analytic extension of the associated zeta function, that allows to evaluate the derivative at zero. The first aspect of the problem was originally addressed by Cheeger in \cite{Che2} (see also 
\cite{Spr3} and \cite{Spr6}). In the work of Cheeger, the Hodge-de Rham theory is developed for spaces with singularity of conical type. In particular, it is proved that the Laplacian on forms is a non negative self adjoint operator on the space of square integrable forms on the cone, if some set of appropriate boundary conditions at the tip of the cone are used. We recall this point briefly in the following Remark \ref{SL}.  We give the spectrum of $\Delta^{(q)}$ on $C_\alpha S^1_{l\sin\alpha}$ and on $C_\alpha S^2_{l\sin\alpha}$ in Lemma \ref{eig1} and Lemma \ref{eig2} below, respectively. Next, to deal with the second aspect, namely an analytic extension of the zeta functions and a method to evaluating the derivative at zero, we use a method introduced by Spreafico  to deal with the zeta invariants of an abstract  class of double zeta functions.  In fact, the  eigenvalues of $\Delta^{(q)}_{C_\alpha S^n_{l\sin\alpha}}$ can be identified with the zero $z_{\nu,k}$ of some combination  of Bessel functions and their derivatives, and be enumerated with two positive indices as
$\lambda_{n,k}^{(q)}=z^2_{u_n,k}$, where the $u_n$ depends on the eigenvalues of the the Laplacian on some space of $q$-forms on the section of the cone. Using classical estimates for the zeros of the Bessel functions it is possible to prove that the relevant sequences $U$ and $S$ are contained in the class of abstract sequences introduced in \cite{Spr4} \cite{Spr9}.  This means that we can use the method of \cite{Spr3} \cite{Spr5} \cite{Spr6} \cite{Spr9}, to evaluate the derivative at zero of the associated zeta functions. We will give in the following Section \ref{s4.a} a quick overview of this method, with the main results, formulated in a way more suitable for the applications we have in mind in the present work.
Beside the approach was originally introduced in \cite{Spr3} and consequently refined and generalized in other works with different application \cite{Spr5} \cite{Spr6}, we will use \cite{Spr9} as reference for the notation and the definition.

\subsection{Spectrum of the Laplacian on forms}
\label{s4.1}

In this section we compute the spectrum of the Laplacian on forms. The general form of the solutions of the eigenvalues equation are given in \cite{Che2} and \cite{Che3}. However, we present here the explicit form of the solutions in the case under study and some details on the calculation, that we were not able to find elsewhere. Furthermore we give, in the course of the proofs,  the complete set of the eigenforms of the Laplace operator. We give a more detailed proof for the case of the circle. We denote by $\{k:\lambda\}$ the set of eigenvalues $\lambda$
with multiplicity $k$.

\begin{rem}\label{SL} Decomposing with respect to the projections on the eigenspaces of the restriction of the Laplacian on the section of the cone (i.e with respect to the angular momenta), the definition of an appropriate self adjoint extension of the Laplace operator (on functions) on a cone reduces to the analysis of the boundary values of a singular Sturm Liouville ordinary second order differential equation. The problem was addressed already by Rellich in \cite{Rel}, who parameterized the self adjoint extensions. In particular, it turns out that the there are not boundary values for the non zero mode of the angular momentum, while a boundary condition is necessary for the zero modes, and the unique self adjoint extension defined by this boundary condition is the maximal extension, corresponding to the Friedrich extension (see \cite{BS2} or \cite{Che2} for the boundary condition). The same argument  works for the Laplacian on forms. However, in the present situation we do not actually need boundary values for forms of positive degree, since the middle homology of the section of the cone is trivial (compare with \cite{Che0}).
\end{rem}

\begin{lem}\label{eig1} The spectrum of the (Friedrich extension of the) Laplacian operator $\Delta_{C_\alpha S^1_{l\sin\alpha}}^{(q)}$ on $q$-forms with absolute boundary conditions is (where $\nu={\rm cosec}\alpha$):
\begin{align*}
\Sp \Delta_{C_\alpha S^1_{l\sin\alpha}}^{(0)}=& \left\{j_{1,k}^2/l^{2}\right\}_{k=1}^{\infty}\cup \left\{2:(j_{\nu n,k}')^2/l^{2}\right\}_{n,k=1}^\infty, \\
\Sp \Delta_{C_\alpha S^1_{l\sin\alpha}}^{(1)}=& \left\{j_{0,k}^2/l^{2}\right\}_{k=1}^{\infty}\cup\left\{j_{1,k}^2/l^{2}\right\}_{k=1}^\infty\cup
\left\{2:j_{\nu n,k}^2/l^{2}\right\}_{n,k=1}^\infty \\ & \cup \left\{2:(j_{\nu n,k}')^2/l^{2}\right\}_{n,k=1}^\infty , \\
\Sp \Delta_{C_\alpha S^1_{l\sin\alpha}}^{(2)}=& \left\{j_{0,k}^2/l^{2}\right\}_{k=1}^\infty\cup \left\{2:j_{\nu n,k}^2/l^{2}\right\}_{n,k=1}^\infty. \\
\end{align*}

The spectrum of the Laplacian operator $\Delta_{C_\alpha S^1_{l\sin\alpha}}^{(q)}$ on $q$-forms with relative boundary conditions is:
\begin{align*}
\Sp \Delta_{C_\alpha S^1_{l\sin\alpha}}^{(0)}=& \left\{j_{0,k}^2/l^{2}\right\}_{k=1}^\infty\cup \left\{2:j_{\nu n,k}^2/l^{2}\right\}_{n,k=1}^\infty, \\
\Sp \Delta_{C_\alpha S^1_{l\sin\alpha}}^{(1)}=& \left\{j_{0,k}^2/l^{2}\right\}_{k=1}^{\infty}\cup\left\{j_{1,k}^2\right/l^{2}\}_{k=1}^\infty\cup
\left\{2:j_{\nu n,k}^2/l^{2}\right\}_{n,k=1}^\infty \\ & \cup \left\{2:(j_{\nu n,k}')^2/l^{2}\right\}_{n,k=1}^\infty , \\
\Sp \Delta_{C_\alpha S^1_{l\sin\alpha}}^{(2)}=& \left\{j_{1,k}^2/l^{2}\right\}_{k=1}^{\infty}\cup \left\{2:(j_{\nu n,k}')^2/l^{2}\right\}_{n,k=1}^\infty. \\
\end{align*}
\end{lem}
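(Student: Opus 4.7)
The plan is separation of variables for $\Delta^{(0)}$ on the cone, Hodge duality to pass to $\Delta^{(2)}$, and the Hodge-de Rham decomposition to recover $\Delta^{(1)}$. In coordinates $(r,\theta)$ the metric is $g_E = dr^2 + r^2 a^2 d\theta^2$ with $a=\sin\alpha$, so
$$
\Delta^{(0)} = -\partial_r^2 - \frac{1}{r}\partial_r - \frac{1}{r^2 a^2}\partial_\theta^2.
$$
Expanding $f=\sum_n u_n(r)e^{in\theta}$ reduces $\Delta^{(0)}f=\lambda f$, on each Fourier mode, to Bessel's equation of order $\nu_n=|n|/a$, whose two independent solutions are $J_{\nu_n}(\sqrt{\lambda}\,r)$ and $Y_{\nu_n}(\sqrt{\lambda}\,r)$. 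The Friedrich extension at the conical tip (Remark \ref{SL}) selects $J_{\nu_n}$ on every mode. The condition at $r=l$ is Neumann ($u_n'(l)=0$) for absolute boundary conditions on functions, and Dirichlet ($u_n(l)=0$) for relative, so $\sqrt{\lambda}\,l$ runs over the positive zeros $j'_{\nu_n,k}$ of $J'_{\nu_n}$ in the absolute case and over the positive zeros $j_{\nu_n,k}$ of $J_{\nu_n}$ in the relative case. The mode $n=0$ contributes with multiplicity one and each pair $\pm n$, $n\geq 1$, with multiplicity two; the identity $J_0'=-J_1$ turns $j'_{0,k}$ into $j_{1,k}$ in the absolute case. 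This gives the stated $\Delta^{(0)}$ spectra.

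For the top degree, the Hodge star $*:\Omega^0\to\Omega^2$ is a pointwise isometry which commutes with $\Delta$ and, as one reads directly from the definitions of $B_{\rm abs}$ and $B_{\rm rel}$ in terms of $\omega_{\rm norm}$ and $\omega_{\rm tan}$, exchanges the two boundary conditions. Hence
$$
\Sp \Delta^{(2)}_{\rm abs}=\Sp \Delta^{(0)}_{\rm rel},\qquad \Sp \Delta^{(2)}_{\rm rel}=\Sp \Delta^{(0)}_{\rm abs},
$$
up to zero eigenvalues, which live in $\H^0_{\rm abs}\cong H^0(W;\R)$ and $\H^2_{\rm rel}\cong H^2(W,\b W;\R)$ and are excluded from the Ray-Singer spectrum. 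For the middle degree, the cone is topologically a disc, so $\H^1_{\rm abs}\cong H^1(W;\R)=0$ and $\H^1_{\rm rel}\cong H^1(W,\b W;\R)=0$. The Hodge-de Rham decomposition then writes any $1$-form with positive eigenvalue as $d\phi$, with $\phi$ an eigenfunction of $\Delta^{(0)}$ satisfying the same boundary condition, or as $d^\dagger\psi$, with $\psi$ an eigenform of $\Delta^{(2)}$ satisfying the same boundary condition; both correspondences preserve the eigenvalue. Consequently $\Sp\Delta^{(1)}_\bullet$ is the disjoint union of the nonzero parts of $\Sp\Delta^{(0)}_\bullet$ and $\Sp\Delta^{(2)}_\bullet$, which matches the stated lists; note that the two resulting $1$-form spectra coincide as sets, as they must by invariance of $\Omega^1$ under the Hodge star.

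The delicate point is not the Bessel reduction itself but the justification that at $r=0$ the Friedrich extension imposes no boundary value on the nonzero Fourier modes (where $Y_{\nu_n}$ is not in $L^2$ for $\nu_n\geq 1$) while it still selects $J_0$ in the zero mode (where $Y_0\sim\log r$ is in $L^2$ near the tip); this is exactly the content invoked from the references recalled in Remark \ref{SL}. Once that is granted, the remainder of the argument is bookkeeping of multiplicities together with the identification $j'_{0,k}=j_{1,k}$.
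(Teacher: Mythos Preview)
Your argument is correct and takes a genuinely different route from the paper's. The paper computes $\Delta^{(1)}$ directly: it writes out the coupled system for $(f_x,f_\theta)$, performs the change of basis $(f_x,f_\theta)=(\nu g_x,-ix g_\theta)$ to decouple it into two Bessel equations of orders $|\nu n\pm 1|$, obtains explicit eigenforms, and then takes the appropriate linear combinations to satisfy the boundary conditions. You bypass all of this by invoking Hodge--de Rham: once $\Sp\Delta^{(0)}$ is known, Poincar\'e duality via $*$ (which swaps absolute and relative boundary conditions) gives $\Sp\Delta^{(2)}$, and the decomposition of positive $1$-eigenforms into exact plus coexact pieces yields $\Sp_+\Delta^{(1)}=\Sp_+\Delta^{(0)}\cup\Sp_+\Delta^{(2)}$, since on a $2$-manifold every positive $0$-eigenfunction is coexact and every positive $2$-eigenform is exact.

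What each approach buys: yours is shorter and conceptually cleaner, and it explains immediately why the $1$-form list is the union of the other two lists (and why the absolute and relative $1$-form spectra agree). The paper's direct computation, on the other hand, produces the explicit $1$-eigenforms $\omega_{n,\pm}=J_{|\nu n\pm 1|}(\lambda_n x)\e^{in\theta}(\nu\,dx-ix\,d\theta)$, which the authors explicitly note as a goal of their presentation. One point you should make explicit: your use of the Hodge decomposition on the cone is not covered by Remark~\ref{SL} alone (which concerns the self-adjointness of the Friedrichs extension); it requires Cheeger's extension of Hodge theory to $L^2$-forms on spaces with conical singularities, as in \cite{Che2,Che3}. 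The paper itself invokes exactly this in the proof of the companion Lemma~\ref{eig2}, so the citation is available, but your argument leans on it more heavily and should say so.
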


\begin{proof} Recall we parameterize $C_\alpha S^1_{l\sin\alpha}$ by
\[
C_\alpha S^1_{l\sin\alpha}=\left\{\begin{array}{l} x_1=x\sin\alpha\cos\theta\\x_2=x\sin\alpha\sin\theta\\x_3=x\cos\alpha\end{array}
\right. ,
\]
where $(x,\theta)\in [0,l]\times [0,2\pi]$, $l$ and $\alpha$ are fixed positive real numbers and $0<a=\sin\alpha\leq 1$. The
induced metric is
\begin{align*}
g&=d x\otimes d x+a^2 x^2 d\theta\otimes d\theta,\\
\end{align*}
and the Hodge operator is
\begin{align*}
*&:1\mapsto a x d x\wedge d\theta;\\
*&:d x\mapsto a x  d\theta,\qquad
*:d\theta \mapsto -\frac{1}{a x}  d x;\\
*&:d x\wedge d\theta\mapsto \frac{1}{ax}.
\end{align*}

The Laplacian on forms is
\begin{align*}
\Delta^{(0)}(f)=&-\b_x^2 f-\frac{1}{x}\b_x
f-\frac{1}{a^2x^2}\b_\theta^2 f;\\
\Delta^{(1)}(f_x d x+ f_\theta d_\theta)=& \left(-\b_x^2 f_x -\frac{1}{a^2 x^2} \b^2_\theta f_x +\frac{1}{x^2}
f_x-\frac{1}{x}\b_x f_x + \frac{2}{a^2
x^3}\b_\theta f_\theta \right)d x\\
&+\left(-\b^2_x f_\theta -\frac{1}{a^2 x^2}\b_\theta^2 f_\theta +\frac{1}{x}\b_x f_\theta -\frac{2}{x}\b_\theta
f_x\right)
d\theta,\\
\Delta^{(2)}(f dx\wedge d\theta)=&-\b_x^2 f+\frac{1}{x}\b_x f -\frac{2}{x^2} f -\frac{1}{a^2x^2}\b_\theta^2 f.
\end{align*}

Using the decomposition described in equation (\ref{dec}), we obtain from equations (\ref{abs}), and (\ref{rel}), the
following sets of boundary conditions. For the $0$-forms:
\beq\label{absrel0}
{\rm rel.}:\hspace{10pt}\omega(l,\theta)=0,\quad \quad{\rm abs.}:\hspace{10pt}(\partial_x
\omega)(l,\theta)=0,
\eeq
and relative BC coincide with Dirichlet BC.
For  $2$-forms
\beq\label{absrel2} {\rm abs.}:\hspace{10pt}\omega(l,\theta)=0, \quad \quad{\rm abs.}:\hspace{10pt}\left(\D_x\frac{\omega}{x}\right)(l,\theta)=0,
\eeq
and absolute BC coincide with Dirichlet BC. For  $1$-forms:
\beq\label{absrel1}
{\rm abs.}:\hspace{10pt}
\left\{\begin{array}{l}\omega_x(l,\theta)=0,\\(\D_x\omega_\theta)(l,\theta)=0,\end{array}\right.
\quad
{\rm rel.}:\hspace{10pt}
\left\{\begin{array}{l}\omega_\theta(l,\theta)=0,\\\left(\D_x(ax\omega_x)+\frac{1}{ax}\D_\theta
\omega_\theta\right)(l,\theta)=0.\end{array}\right. \eeq

Next, we solve the eigenvalues equations. Note that the Laplacian on 2-forms coincides with the one on 0-forms up to a Liouville
transform $f=x h$. Consider the eigenvalues equation for the Laplacian on 0-forms \beq\label{e0} \Delta^{(0)}
f=\left(-\D_x^2 -\frac{1}{x}\D_x -\frac{1}{a^2x^2}\D_\theta^2\right) f=\lambda^2 f. \eeq

We can decompose the problem in the eigenspaces of $-\D_\theta$. In fact, $\phi_n(\theta)=\e^{in\theta}$ is a complete
system of eigenfunctions for $-d_\theta$ on the circle $S^1$, and the eigenvalue of $\phi_n$ is $\lambda_n=n^2$, $n\in\Z$.
Thus,
\[
\Delta^{(0)}=\sum_{n\in\Z} L_n \Pi_n,
\]
where $\Pi_n$ is the projection onto the subspace generated by the eigenvector $\phi_n$ of the eigenspace relative to
the eigenvalue $\lambda_n$ and
\[
L_n=-d_x^2 -\frac{1}{x}d_x +\frac{\nu^2 n^2}{x^2},
\]
where $\nu=\frac{1}{a}$. Since $\lambda_n=\lambda_{-n}$, $\Pi_{n}=\Pi_{-n}$.  Thus $-d_\theta$ has the complete system
\[
\left\{\lambda_n=n^2; \phi_{n,+}(\theta)=\e^{i n \theta}, \phi_{n,-}(\theta)=\e^{-i n \theta}\right\}_{n\in \N},
\]
where all the eingenvalues are double up to $\lambda_0=0$ that is simple, and since also $L_n=L_{-n}$,
\[
\Delta^{(0)}=L_0\Pi_0\oplus\sum_{n=1}^\infty L_n (\Pi_{n,+}\oplus \Pi_{n,-}),
\]
where $\Pi_{\pm, n}$ is the projection on the eigenspace generated by $\phi_{\pm n}$ in the eigenspace of $\lambda_n$
(in fact the eigenspace of $\lambda_n$ is generated by the two eigenvector $\phi_{\pm n}$ for all $n\not=0$). Now, we
solve the eigenvalues equation for $L_n$ on $L^2(0,l)$, namely \beq\label{e3} L_n u=\left(-d_x^2 -\frac{1}{x}d_x
+\frac{\nu^2 n^2}{x^2}\right)u=\lambda^2 u. \eeq

This can be solved in terms of Bessel function. By classical result, equation (\ref{e3}) has
the two linearly independent solutions (assume $\mu=\nu n$ is not an integer) $y_{\pm\mu}(x)=J_{\pm |\mu|}(\lambda x)$ (where
we assume $\lambda>0$). But $J_{-|\mu|}(x)$ diverges as $x^{-|\mu|}$ at $x=0$, and therefore does not satisfy the BC at
$x=0$, or it is not in $L^2(0,l)$ (depending on the value of $\mu$). Thus in any case we only have the solution $y_+$.
This means that the eigenvalues equation (\ref{e3}) for $L_n$ has the solution $\psi_n(x)=J_{|\nu n|}(\lambda x)$, for
each $n\in \Z$; in particular it has solution $\psi_n(x)=J_{\nu n}(\lambda x)$, if $n\geq 0$, since $\nu\geq 0$.
Therefore a system of linearly independent solutions of the eigenvalues equation (\ref{e0}) for $\Delta^{(0)}$ is
\beq\label{sol0}
\begin{aligned}
&\left\{ \phi_0(\theta)\psi_0(x)=J_0(\lambda x)\right\}\\
&\cup\left\{ \phi_{n,+}(\theta)\psi_n(x)=\e^{i n\theta}J_{\nu n}(\lambda x), \phi_{n,-}(\theta)\psi_n(x)=\e^{- i
n\theta}J_{\nu n}(\lambda x)\right\}_{n\in\N_0}.
\end{aligned}
\eeq

The solution for $\Delta^{(2)}$ are given by the inverse of the above Liouville transform,
\beq\label{sol2}
\begin{aligned}
&\left\{ \phi_0(\theta)\psi_0(x)=xJ_0(\lambda x)\right\}\\
&\cup\left\{ \phi_{n,+}(\theta)\psi_n(x)=x\e^{i n\theta}J_{\nu n}(\lambda x), \phi_{n,-}(\theta)\psi_n(x)=x\e^{- i
n\theta}J_{\nu n}(\lambda x)\right\}_{n\in\N_0}.
\end{aligned}
\eeq

The eigenvalues equation for the Laplacian on $1$-forms: \beq \label{1e} \Delta^{(1)} \omega=\lambda^2\omega, \eeq with
$\omega=f_x dx+f_\theta d\theta$  corresponds to the system of partial differential equations
\beq\label{1e1}
\left\{\begin{aligned}&-\D_x^2 f_x -\frac{1}{x}\D_x f_x+\frac{-\nu^2\D^2_\theta+1}{x^2}  f_x
+ \frac{2\nu^2}{x^3}\D_\theta f_\theta=\lambda^2 f_x, \\
&-\D^2_x f_\theta +\frac{1}{x}\D_x f_\theta+\frac{-\nu^2 \D_\theta^2}{x^2} f_\theta
 -\frac{2}{x}\D_\theta f_x=\lambda^2 f_\theta.\end{aligned}\right.
\eeq

Since a base for $L^2(S^1)$ is given by the functions $\e^{i n \theta}$ with integer $n$, we consider solutions of the
type $\omega=f_x(x)\e^{im\theta}dx+f_\theta(x)\e^{i n\theta}d\theta$, with integers $m$ and $n$. Substitution in equation
(\ref{1e1}) gives
\[
\left\{
\begin{aligned}&-\D_x^2 f_x \e^{im\theta}-\frac{1}{x}\D_x f_x\e^{im\theta}+\frac{(\nu m)^2+1}{x^2}  f_x
\e^{im\theta}+ \frac{2i\nu^2 n}{x^3} f_\theta\e^{in\theta}=\lambda^2 f_x\e^{im\theta}, \\
&-\D^2_x f_\theta \e^{in\theta}+\frac{1}{x}\D_x f_\theta\e^{in\theta}+\frac{(\nu n)^2}{x^2} f_\theta\e^{in\theta}
 -\frac{2i m}{x} f_x\e^{im\theta}=\lambda^2 f_\theta\e^{in\theta},\end{aligned}\right.
\]
that is satisfied if and only if $m=n$. Therefore, it follows that the solutions of equation (\ref{1e}) are of the form
\[
\omega=\e^{in\theta}(f_x(x)dx+f_\theta(x)d\theta),
\]
with $n\in \Z$, or in other words, that the operator $\Delta^{(1)}$ decomposes as
\[
\Delta^{(1)}=\sum_{n\in\Z} L_n \Pi_n,
\]
where
\[
L_n=\left(
\begin{array}{cc}-d_x^2  -\frac{1}{x}d_x +\frac{(\nu n)^2+1}{x^2}  &
 \frac{2i\nu^2 n}{x^3} \\-\frac{2i n}{x}&
-d^2_x  +\frac{1}{x}d_x +\frac{(\nu n)^2}{x^2}   \end{array}\right),
\]
on $(L^2(0,1))^2$, and $\Pi_n$ is the projector onto the subspace generated by $\e^{i n\theta}$ of the eigenspace
relative to the eigenvalue $n^2$ of $-d^2_\theta$. Therefore we need to solve the eigenvalues equation
\[
L_n u=\lambda_n^2 u,
\]
where $u=(f_x,f_\theta)$ are two functions in $L^2(0,l)$. This corresponds
to the system
\[
\left\{\begin{aligned}&-d_x^2 f_x -\frac{1}{x}d_x f_x+\frac{(\nu n)^2+1}{x^2}  f_x
+ \frac{2i\nu^2 n}{x^3} f_\theta=\lambda_n^2 f_x, \\
&-d^2_x f_\theta +\frac{1}{x}d_x f_\theta+\frac{(\nu n)^2 }{x^2} f_\theta
 -\frac{2 in}{x} f_x=\lambda_n^2 f_\theta.\end{aligned}\right.
\]

With the change of base $(f_x,f_\theta)=(\nu g_x,-ix g_\theta)$, we obtain
\beq\label{1e3}
\left\{\begin{aligned}& \left(-d_x^2
-\frac{1}{x}d_x +\frac{(\nu n+1)^2}{x^2}\right)(g_x+ g_{\theta})
=\lambda_n^2 (g_x+ g_{\theta}), \\
& \left( -d^2_x  -\frac{1}{x}d_x +\frac{(\nu n -1)^2}{x^2} \right)(g_x- g_{\theta})
 =\lambda_n^2 (g_x - g_{\theta}).\end{aligned}\right.
\eeq

By classical results on the solution of the Bessel equation, and taking only the $L^2$ solution,
we have that a complete set of linearly independent solution is given by the two vectors
\[
\left\{\begin{aligned}
(g_x,g_\theta)_n&=(J_{|\nu n+1|}(\lambda_n x),J_{|\nu n+1|}(\lambda_n x)),\\
(g_x,g_\theta)_n&=(J_{|\nu n-1|}(\lambda_n x),-J_{|\nu n-1|}(\lambda_n x)).
\end{aligned}\right.
\]

Therefore,  the eigenvalues equation (\ref{1e}) relative to the operator $\Delta^{(1)}$, has the following complete set of linearly independent $L^2$ solutions with $n\in\Z$

\beq\label{sol1} \left\{\omega_{n,\pm}=J_{|\nu n\pm1|}(\lambda_n x)\e^{in\theta}\left(\nu dx-ixd\theta\right)
\right\}.
\eeq

Eventually, we apply the boundary conditions. For $0$-forms, decomposing as in equation (\ref{dec}) $\omega_{tg} =
\omega$ and $\omega_{\rm norm} = 0$. Relative boundary conditions given in equation (\ref{absrel0}) applied to the solutions
in equation (\ref{sol0}), give $\lambda=\frac{j_{\nu n,k}}{l}$, where $j_{\nu, k}$ are the positive zeros of the Bessel
function $J_{\nu}$, arranged in increasing order, with $k=1,2,\dots$. Since it is known that the set
$\{J_{\nu}(j_{\nu,k} x)\}_{k=1,2,\dots}$ defines an orthogonal basis of the space $L^2(0,1)$, we have proved that the
set
\begin{align*}
\left\{ \phi_0(\theta)\psi_{0,k}(x)=J_0(\frac{j_{0,k}}{l}x), \phi_{n,\pm}(\theta)\psi_{n,k}(x)=\e^{\pm i n\theta}J_{\nu
n}(\frac{j_{\nu n,k}}{l} x)\right\}_{n\in\N_0},
\end{align*}
defines a complete  set of orthogonal linear independent solutions of the eigenvalues equation (\ref{e0}) for $\Delta^{(0)}$
with Dirichlet BC at $x=l$ on $L^2(0,l)$, and where $\lambda=\frac{j_{\nu n,k}}{l}$ for both $\phi_{n,\pm}J_{\nu n}$
when $n\not=0$.
Absolute boundary conditions are given in equation (\ref{absrel0}). Applying to the solutions in equation (\ref{sol0}), we
obtain
\[
\frac{\partial \omega}{\partial x}(l,\theta) =\lambda_n e^{i n \theta} J'_{\mid\nu n\mid} (\lambda_n l) = 0,
\]
that give $\lambda_n = \frac{j'_{\mid \nu n\mid,k}}{l}$, where the $j'_{\nu n, k}$ are the zeros of $J'_{\nu n}(z)$.

The result for $2$-forms is the dual of that for $0$-forms. Note that, applying the inverse of
the previous Liouville transform, we get a complete system for $\Delta^{(2)}$ with absolute boundary conditions:
\[
\left\{ \lambda^{(2)}_{n,k}=\frac{j_{\nu|n|,k}^2}{l^2},
\omega^{(2)}_{n,k}(x,\theta)=\phi_{n}(\theta)\rho_{\nu|n|,k}(x)=\e^{i n\theta}xJ_{\nu |n|}(\frac{j_{\nu |n|,k}}{l}
x)\right\}_{n\in\Z,k\in\N_0}.
\]

For a $1$-form $\omega(x,\theta)=\omega_x(x,\theta)dx+\omega_\theta(x,\theta)d\theta$, $\omega_{\rm tan}=\omega_\theta$ and $\omega_{\rm norm}=\omega_x$. Note that none of the solutions in \eqref{sol1} satisfy the BC  \eqref{absrel1}, for $\lambda_n\neq 0$. So we consider linear combinations $\omega_{n,\pm}(x,\theta) = \omega_{1,n}(x,\theta) \pm \omega_{2,n}(x,\theta)$.
Applying the BC \eqref{absrel1} to $\omega_{n,\pm}(x,\theta)$ we obtain, if $n\neq 0$,  the eigenvalues

\begin{align*} \lambda_{n,+}^2 &= j_{|\nu n|,k}^2/l^2, & \lambda_{n,-}^2 &= (j'_{|\nu n|,k})^2/l^2.
\end{align*}

If $n=0$, we have $\lambda^2_{0,+}=j^2_{1,k}/l^2$, and $\lambda^2_{0,-}=j^2_{0,k}/l^2$.
Applying the BC \eqref{absrel1} to  $\omega_{n,\pm}(x,\theta)$ we obtain, if $n\neq 0$,  the eigenvalues
\begin{align*} \lambda_{n,+}^2 &= (j'_{|\nu n|,k})^2/l^2, & \lambda_{n,-}^2 &= j_{|\nu n|,k}^2/l^2.
\end{align*}

If $n=0$, we have $\lambda^2_{0,+}=j^2_{0,k}/l^2$, and $\lambda^2_{0,-}=j^2_{1,k}/l^2$. The eigenforms follow from equation \eqref{sol1}.

\end{proof}

\begin{lem}\label{eig2}
The spectrum of the (Friedrich extension of the) Laplacian operator $\Delta_{C_{\alpha} S^2_{l\sin\alpha}}^{(q)}$ on $q$-forms with absolute boundary conditions is:
\begin{align*}
\Sp \Delta_{C_{\alpha} S^2_{l\sin\alpha}}^{(0)}=& \left\{(2n+1): \tilde j_{\mu_n,k,-}^2/l^{2}\right\}_{n,k=1}^{\infty}
\cup \left\{j_{\frac{3}{2},k}^2/l^{2}\right\}_{k=1}^\infty, \\
\Sp \Delta_{C_{\alpha} S^2_{l\sin\alpha}}^{(1)}=& \left\{j_{\frac{3}{2},k}^2/l^{2}\right\}_{k=1}^\infty\cup
\left\{(2n+1):j_{\mu_n, k}^2/l^{2}\right\}_{n,k=1}^\infty\\
&\cup\left\{(2n+1):\tilde j_{\mu_n,k,+}^2/l^{2}\right\}_{n,k=1}^\infty\cup
\left\{(2n+1):\tilde j_{\mu_n,k,-}^2/l^{2}\right\}_{n,k=1}^\infty,\\
\Sp \Delta_{C_{\alpha} S^2_{l\sin\alpha}}^{(2)}=& \left\{j_{\frac{1}{2},k}^2/l^{2}\right\}_{k=1}^\infty\cup
\left\{(2n+1):j_{\mu_n, k}^2/l^{2}\right\}_{n,k=1}^\infty\\
&\cup\left\{(2n+1):\tilde j_{\mu_n,k,+}^2/l^{2}\right\}_{n,k=1}^\infty\cup
\left\{(2n+1):j_{\mu_n,k}^2/l^{2}\right\}_{n,k=1}^\infty, \\
\Sp \Delta_{C_{\alpha} S^2_{l\sin\alpha}}^{(3)}=& \left\{(2n+1):j_{\mu_n,k}^2/l^{2}\right\}_{n,k=1}^{\infty} \cup
\left\{j_{\frac{1}{2},k}^2/l^{2}\right\}_{k=1}^\infty ,\\
\end{align*}

The spectrum of the Laplacian operator $\Delta_{C_{\alpha} S^2_{l\sin\alpha}}^{(q)}$ on $q$-forms with relative boundary conditions is:
\begin{align*}
\Sp \Delta_{C_{\alpha} S^2_{l\sin\alpha}}^{(0)}=& \left\{(2n+1):j_{\mu_n,k}^2/l^{2}\right\}_{n,k=1}^{\infty} \cup \left\{j_{\frac{1}{2},k}^2/l^{2}\right\}_{k=1}^\infty ,\\
\Sp \Delta_{C_{\alpha} S^2_{l\sin\alpha}}^{(1)}=& \left\{j_{\frac{1}{2},k}^2/l^{2}\right\}_{k=1}^\infty\cup
\left\{(2n+1):j_{\mu_n, k}^2/l^{2}\right\}_{n,k=1}^\infty\\
&\cup\left\{(2n+1):\tilde j_{\mu_n,k,+}^2/l^{2}\right\}_{n,k=1}^\infty\cup
\left\{(2n+1):j_{\mu_n,k}^2/l^{2}\right\}_{n,k=1}^\infty, \\
\Sp \Delta_{C_{\alpha} S^2_{l\sin\alpha}}^{(2)}=& \left\{j_{\frac{3}{2},k}^2/l^{2}\right\}_{k=1}^\infty\cup
\left\{(2n+1):j_{\mu_n, k}^2/l^{2}\right\}_{n,k=1}^\infty\\
&\cup\left\{(2n+1):\tilde j_{\mu_n,k,+}^2/l^{2}\right\}_{n,k=1}^\infty\cup
\left\{(2n+1):\tilde j_{\mu_n,k,-}^2/l^{2}\right\}_{n,k=1}^\infty,\\
\Sp \Delta_{C_{\alpha} S^2_{l\sin\alpha}}^{(3)}=& \left\{(2n+1):\tilde j_{\mu_n,k,-}^2/l^{2}\right\}_{n,k=1}^{\infty} \cup \left\{j_{\frac{3}{2},k}^2/l^{2}\right\}_{k=1}^\infty, \\
\end{align*}
where (recall $\nu={\rm cosec}\alpha$)
\[
\mu_n=\sqrt{\nu^2 n(n+1)+\frac{1}{4}},
\]
and where the $\tilde j_{\nu,k,\pm}$ are the zeros of the function $G^{\pm}_{\nu}(z)=\pm\frac{1}{2}J_{\nu}(z)+zJ'_\nu(z)$.
\end{lem}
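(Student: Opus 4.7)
The plan is to mirror the strategy used in the proof of Lemma \ref{eig1}, but with the base circle replaced by the base sphere $S^2$. First I would write out the Laplacian $\Delta_{C_\alpha S^2_{l\sin\alpha}}^{(q)}$ for $q=0,1,2,3$ in the explicit coordinates introduced at the beginning of Section \ref{s2}, together with the decompositions of absolute and relative boundary conditions coming from equations \eqref{abs}--\eqref{rel}. Then, using the Hodge decomposition on $S^2$, any $q$-form on the cone will be written as a sum of terms of the form $f(r)\,\eta + g(r)\,dr\wedge\zeta$, where $\eta,\zeta$ are pulled back from harmonic, exact or coexact forms on $S^2$. Since on $S^2$ the Laplacian on functions has spectrum $\{n(n+1)\}_{n\geq 0}$ with multiplicity $2n+1$, and since by Hodge duality the nonzero spectra of $\Delta^{(1)}_{S^2}$ and $\Delta^{(2)}_{S^2}$ are controlled by the same data, this will reduce the problem to an ODE problem in the radial variable $r$ for each $n$.

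Next, for each degree $q$ and each $n\geq 0$, the radial equation is of Bessel type once one incorporates the factor $(r\sin\alpha)^2 = r^2/\nu^2$ coming from the cone metric. An appropriate Liouville substitution (absorbing the factor $r$ that comes from the three-dimensional volume density) converts the radial operator into a standard Bessel operator with index
\[
\mu_n \;=\; \sqrt{\nu^2 n(n+1)+\tfrac14},
\]
the $\tfrac14$ being the usual shift in dimension three. Only the $L^2$ solutions at $r=0$ survive, and by the argument recalled in Remark \ref{SL} these are exactly the $J_{\mu_n}(\lambda r)$ (for $q\geq 1$ no extra boundary condition at the tip is needed because the middle cohomology of the section vanishes, and for $q=0,3$ the Friedrich extension selects the same solution). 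The special pure Bessel indices $\tfrac12$ and $\tfrac32$ appearing in the statement correspond to $n=0$, where $\mu_0=\tfrac12$ and one of the Hodge pieces is absent.

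I would then impose the boundary conditions at $r=l$. For $0$-forms, relative BC are Dirichlet and give $j_{\mu_n,k}$; absolute BC, after the Liouville substitution, become $-\tfrac12 J_{\mu_n}(z)+zJ'_{\mu_n}(z)=0$, whose zeros are the $\tilde j_{\mu_n,k,-}$. The $3$-form case is dual. For $1$-forms and $2$-forms the system mixes tangential and normal components, exactly as in equation \eqref{absrel1} of the $S^1$ case; after diagonalising the $2\times 2$ system one obtains three decoupled conditions for each $n\geq 1$, namely $J_{\mu_n}(\lambda l)=0$ and $G^{\pm}_{\mu_n}(\lambda l)=0$, producing the zeros $j_{\mu_n,k}$ and $\tilde j_{\mu_n,k,\pm}$. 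The multiplicity $2n+1$ in every line simply reflects the dimension of the $n$-th eigenspace of $\Delta_{S^2}$.

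The main obstacle is the bookkeeping for $q=1,2$: unlike the $S^1$ situation, each $n\geq 1$ sector of a $1$-form on the cone couples a scalar function (coefficient of $dr$), a scalar function (coefficient of the exact part on $S^2$) and a scalar function (coefficient of the coexact part), and the boundary conditions must be diagonalised carefully to produce the three distinct families of zeros appearing in the statement. A clean way to organise this is to work in a basis of forms on $S^2$ adapted to the splitting $\Omega^\bullet(S^2)=\mathcal{H}\oplus d\Omega^{\bullet-1}\oplus d^\dagger\Omega^{\bullet+1}$; treating the three pieces separately, together with the special isolated modes coming from $\mathcal{H}^0(S^2)$ and $\mathcal{H}^2(S^2)$ (which give the $j_{1/2,k}$ and $j_{3/2,k}$ families), will then produce precisely the eight lists stated.
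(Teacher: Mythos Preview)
Your proposal is correct and follows essentially the same route as the paper: explicit coordinates on the cone, separation of variables using the spherical harmonics on $S^2$, the Liouville substitution producing Bessel equations with index $\mu_n=\sqrt{\nu^2 n(n+1)+\tfrac14}$, selection of the $L^2$ branch at the tip, imposition of the boundary conditions at $r=l$, and Hodge duality for $q=2,3$. The only cosmetic difference is that where you speak of ``diagonalising the $2\times2$ system'' for $1$-forms, the paper instead writes down directly the three Cheeger-type eigenform families $\alpha^{(1)}_n$ (coexact tangential), $\beta^{(1)}_n$, $\gamma^{(1)}_n$ (the two independent solutions of the coupled $dr$/exact-tangential system), together with the isolated $n=0$ mode $D^{(1)}$; verifying the boundary conditions on these four explicit families is what produces the three eigenvalue lists $j_{\mu_n,k}$, $\tilde j_{\mu_n,k,\pm}$ and the extra $j_{3/2,k}$.
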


\begin{proof} Recall we parameterize $C_{\alpha}S^{2}_{l\sin\alpha}$ by
\[C_{\alpha}S^2_{l\sin\alpha}=\begin{cases}
x_1=x\sin{\alpha}\sin{\theta_2}\cos{\theta_1} \\[8pt]
x_2=x\sin{\alpha}\sin{\theta_2}\sin{\theta_1} \\[8pt]
x_3=x\sin{\alpha}\cos{\theta_2} \\[8pt]
x_4=x\cos{\alpha}
\end{cases}\] where $(x,\theta_1,\theta_2)\in [0,l]\times[0,2\pi]\times[0,\pi]$, $0<\alpha\leq \pi/2$ is
a fixed real number and $0< a=\sin\alpha\leq 1$.
The induced metric is
\[g = dx\otimes dx + (a^2x^2\sin^2\theta_2) d\theta_1\otimes d\theta_1 + (a^2x^2) d\theta_2 \otimes
     d\theta_2 .\]
The Hodge star acts as follows
\begin{align*}
*&:1\mapsto a^2x^2\sin\theta_2 dx\wedge d\theta_1 \wedge d\theta_2 ;\\
*&:d x\mapsto a^2x^2\sin\theta_2 d\theta_1 \wedge d\theta_2,  *:d\theta_1 \mapsto \frac{-1}{\sin\theta_2} dx
\wedge d\theta_2,*: d\theta_2 \mapsto \sin\theta_2 dx \wedge d\theta_1;\\
*&:d x \wedge d\theta_1 \mapsto \frac{1}{\sin\theta_2} d\theta_2,  *:dx \wedge d\theta_2 \mapsto -\sin\theta_2
d\theta_1,*: d\theta_1 \wedge d\theta_2 \mapsto \frac{1}{a^2x^2\sin\theta_2} dx;\\
*&:dx \wedge d\theta_1 \wedge d\theta_2 \mapsto \frac{1}{a^2x^2\sin\theta_2};
\end{align*}

The Laplacian on forms reads
\[
 \Delta^{(0)}(\omega)=-\left( \frac{2}{x}\partial_x \omega + \partial_x^2 \omega + \frac{\cos\theta_2}{a^2x^2\sin\theta_2}\partial_{\theta_2} \omega
  + \frac{\partial_{\theta_2}^2 \omega}{a^2x^2} + \frac{\partial_{\theta_1}^2 \omega}{a^2x^2\sin^2\theta_2}\right)
\]

\begin{multline*}\Delta^{(1)}(\omega_x dx  + \omega_{\theta_1}d\theta_1 + \omega_{\theta_2} d\theta_2 )= \\[15pt]
\left( -\partial_x^2\omega_x + \frac{2}{x^2}\omega_x - \frac{2}{r}\partial_x \omega_x -
\frac{1}{a^2x^2}\partial_{\theta_2}^2\omega_x -\frac{\cos\theta_2}{a^2x^2\sin\theta_2}\partial_{\theta_2} \omega_x\right. \\
\left. - \frac{1}{a^2x^2\sin^2\theta_2}\partial_{\theta_1}^2\omega_x +\frac{2}{a^2x^3\sin^2\theta_2}\partial_{\theta_1}
\omega_{\theta_1}+
\frac{2}{a^2x^3}\partial_{\theta_2} \omega_{\theta_2}  + \frac{2\cos\theta_2}{a^2x^3\sin\theta_2}\omega_{\theta_2}   \right)dx  \\[15pt]
 +\left( -\partial_x^2\omega_{\theta_1}-\frac{2}{r}\partial_{\theta_1}\omega_x
-\frac{1}{a^2x^2}\partial_{\theta_2}^2\omega_{\theta_1}\right.\\
\left.+\frac{\cos\theta_2}{a^2x^2\sin\theta_2}(\partial_{\theta_2} \omega_{\theta_1}
-2\partial_{\theta_1} \omega_{\theta_2} )-\frac{1}{a^2x^2\sin^2\theta_2}\partial_{\theta_1} ^2\omega_{\theta_1}\right)d\theta_1   \\[15pt]
+\left( -\partial_x^2\omega_{\theta_2}  - \frac{2}{r}\partial_{\theta_2} \omega_x -
\frac{1}{a^2x^2}\partial_{\theta_2}^2\omega_{\theta_2}  - \frac{\cos\theta_2}{a^2x^2\sin\theta_2}\partial_{\theta_2} \omega_{\theta_2} \right. \\
\left. + \frac{1}{a^2x^2\sin^2\theta_2}(\omega_{\theta_2}  - \partial_{\theta_1} ^2\omega_{\theta_2} ) +
\frac{2\cos\theta_2}{a^2x^2\sin^3\theta_2}\partial_{\theta_1} \omega_{\theta_1}\right)d\theta_2 \\
\end{multline*}

Applying the decomposition described in equation (\ref{dec}), we obtain from equations (\ref{abs}), and (\ref{rel}),
the following sets of boundary conditions.

For the $0$-forms:
\beq\label{absrel0S2}
\begin{aligned}
{\rm abs. }&: \partial_x \omega(l,\theta_1,\theta_2)=0,&\quad
{\rm rel. }: \omega(l,\theta_1,\theta_2) = 0.
\end{aligned}
\eeq

For $1$-forms:
\beq\label{absrel1S2}
\begin{aligned}
{\rm abs.}&:
    \left\{\begin{array}{lll} \omega_x (l,\theta_1,\theta_2) = 0 \\
    \partial_x \omega_{\theta_1}(l,\theta_1,\theta_2)=0 \\
    \partial_x\omega_{\theta_2}(l,\theta_1,\theta_2)=0,\end{array}\right. &\quad{\rm rel. }:
    \left\{\begin{array}{lll} \omega_{\theta_1}(l,\theta_1,\theta_2) = 0 \\
    \omega_{\theta_2}(l,\theta_1,\theta_2) = 0 \\
    \partial_x(x^{2} \omega_x)(l,\theta_1,\theta_2) = 0,\end{array}\right.
\end{aligned}
\eeq

Next we solve the eigenvalue equations. For $0$-forms:
\begin{equation}\label{e0S2}
\Delta^{(0)}(\omega) = - \partial^{2}_{x} \omega - \frac{2}{x}\partial_x \omega +  \frac{1}{a^{2}x^{2}}
\Delta^{(0)}_{S^{2}}(\omega) = \lambda^{2} \omega
\end{equation}

We can decompose the problem in the eigenspaces of $\Delta^{(0)}_{S^{2}}$. Let $Y^{k}_{n}(\theta_1,\theta_2) =
\e^{ik\theta_1} P^{\mid k\mid}_{n}(\cos\theta_2)$, where $P^{\mid k\mid}_{n}(\cos\theta_2)$ are the associated Legendre
polynomials and $|k| \leq n$. $Y^{k}_{n}(\theta_1,\theta_2)$ is a complete system of eigenforms for
$\Delta^{(0)}_{S^{2}}$ and the eigenvalues are $n(n+1)$, with multiplicity $2n+1$ and $n\in \Z$, $n \geq 0$. Thus,
\[\Delta^{(0)} = \sum_{n \geq 0} T_n \Pi_n,\]
with
\[T_n = - \partial^{2}_{x} \omega - \frac{2}{x}\partial_x \omega +  \frac{\nu^2 n(n+1)}{x^{2}}, \] and the eigenvalues equation reads
\[T_n (u) = \left(- \partial^{2}_{x} \omega - \frac{2}{x}\partial_x \omega +  \frac{\nu^2 n(n+1)}{x^{2}}\right)u = \lambda^{2} u.\]

This can be solved in terms of Bessel function and the solution is $u_n (x) = x^{-\frac{1}{2}}J_{\mu_n}(\lambda_n x)$.
Hence the solution for the $0$-Laplacian equation is
\begin{equation}\label{sol0S2}
\alpha^{(0)}_{n}(x,\theta_1,\theta_2) = x^{-\frac{1}{2}}J_{\mu_n}(\lambda_n x) Y^{k}_{n}(\theta_1,\theta_2).
\end{equation}

For 1-form we have
\begin{equation}\label{1eS2}
\Delta^{(1)}(\omega) = \lambda^{2} \omega
\end{equation}
with $\omega = \omega_x dx + \omega_{\theta_1} d\theta_1 + \omega_{\theta_2} d\theta_2$. 
Write $\omega= f_{\theta_1\theta_2}(x)\phi(\theta_1,\theta_2) + f_x(x) h(\theta_1,\theta_2) dx$ where $\phi =
f_{\theta_1} d\theta_1+ f_{\theta_2}d\theta_2$ and $h(\theta_1,\theta_2)$ is a 0-form on $S^{2}$. Hence replacing in
\eqref{1eS2} we have the system
\begin{equation}\label{system1}
\left\{\!\begin{aligned} &(-d_x^2 f_{\theta_1\theta_2} )\phi +\frac{\Delta^{(1)}_{S^{2}}(\phi)}{a^{2}x^2}
f_{\theta_1\theta_2} - \frac{2 f_x d(h)}{x}=
\lambda^2 f_{\theta_1\theta_2} \phi, \\
&dx\!\! \left(\!(-d^2_x f_x -\!\frac{2}{x}d_x f_x +\! \frac{2}{x^2} f_x) h +\! \frac{\Delta^{(0)}_{S^{2}}(h)}{a^{2} x^{2}} f_x -\!
\frac{2 f_{\theta_1\theta_2} d^{\dag}_{S^{2}}(\phi)}{a^{2} x^{3}}\right) \! =\! dx (\lambda^2 f_x  h).
\end{aligned}\right.
\end{equation}
Consider $f_x =0$ or $h = 0$ and $\phi$ a coexact eigenform  on $S^{2}$ with  non zero eigenvalue. We have
the equation, for $n \geq 1$,
\[(-d_x^2 f_{\theta_1\theta_2} )\phi +\frac{\nu^{2} n(n+1) f_{\theta_1\theta_2}}{x^2}  \phi = \lambda^2 f_{\theta_1\theta_2} \phi.
\]

Solving this equation in $x$ we find $f_{\theta_1\theta_2} = x^{\frac{1}{2}}J_{\mu_n}(\lambda_n x)$ and then
\[
\alpha^{(1)}_{n} = x^{\frac{1}{2}}J_{\mu_n}(\lambda_n x) \phi.
\]

 Note that $\phi = d^{\dag}_{S^2}(\sin\theta_2
Y^{k}_{n}(\theta_1,\theta_2) d\theta_1 \wedge d\theta_2)$.
Now we consider $f_x \neq 0$, $f_{\theta_1\theta_2} \neq 0$, and $h$  a coexact 0-eigenform  of $S^2$ with non zero
eigenvalue such that $d (h) = \phi$. Hence, $\Delta^{(1)}_{S^{2}}(\phi) = n(n+1) \phi$, $d^{\dag}_{S^{2}}(d(h)) = n(n+1)
h$, and the system \eqref{system1} becomes
\begin{equation*}
\left\{  \begin{aligned} &(-d_x^2 f_{\theta_1\theta_2} )\phi +\frac{n(n+1) f_{\theta_1\theta_2}}{a^{2}x^2} \phi - \frac{2
f_{\theta_1\theta_2} \phi}{x}=
\lambda^2 f_{\theta_1\theta_2} \phi, \\
&  (-d^2_x f_x - \frac{2}{x}d_x f_x +  \frac{2}{x^2} f_x) h +\frac{n(n+1) h}{a^{2} x^{2}} f_x - \frac{2 n(n+1)
f_{\theta_1\theta_2}(\phi)}{a^{2} x^{3}} = \lambda^2 f_x  h.
\end{aligned}\right.
\end{equation*}

Changing the base by $(f_{\theta_1\theta_2},f_x) = (x^{-\frac{1}{2}} g_x , \partial_x(x^{-\frac{1}{2}} g_x ))$, we solve
the system, and the solution is $g_x = J_{\mu_n} (\lambda_n x)$. Hence the solution for the system
\eqref{system1} in this case is
\[
\beta^{(1)}_{n} = x^{-\frac{1}{2}}J_{\mu_n} (\lambda_n x) \phi(\theta_1,\theta_2) + \partial_x(x^{-\frac{1}{2}}
J_{\mu_n} (\lambda_n x) ) h(\theta_1,\theta_2) dx
\] where $h(\theta_1,\theta_2) = Y^{k}_{n}(\theta_1,\theta_2)$.
Consider now $f_x \neq 0$, $f_{\theta_1\theta_2} \neq 0$, and $\psi$ a coexact 0-eigenform  of $S^2$ with non zero
eigenvalue such that $d(\psi) = \phi$ and $d^{\dag}_{S^{2}} d (\psi) = h$. Then $h = n(n+1) \psi $, and the system
\eqref{system1} becomes
\begin{equation}\label{system3}
\left\{\begin{aligned} &(-d_x^2 f_{\theta_1\theta_2} )\phi +\frac{n(n+1)f_{\theta_1\theta_2}}{a^{2}x^2} \phi  - \frac{2
f_x n(n+1)}{x} \phi =
\lambda^2 f_{\theta_1\theta_2} \phi, \\
&dx\left((-d^2_x f_x -\frac{2}{x}d_x f_x + \frac{2}{x^2} f_x) h +\frac{n(n+1)f_x}{a^{2} x^{2}} h - \frac{2
f_{\theta_1\theta_2}}{a^{2} x^{3}} h \right) = dx (\lambda^2 f_x  h).
\end{aligned}\right.
\end{equation}

Changing the base by $(f_{\theta_1\theta_2},f_x) = (\partial_x(x^{\frac{1}{2}} g_x) , x^{-\frac{3}{2}} g_x )$ we solve
the system \eqref{system3} and the solution is $g_x = J_{\mu_n} (\lambda_n x)$. Hence the solution for the system
\eqref{system1} in this case is
\[
\gamma^{(1)}_{n} = \partial_x(x^{\frac{1}{2}} J_{\mu_n} (\lambda_n x)) (\lambda_n x) \phi(\theta_1,\theta_2) +
x^{-\frac{3}{2}} J_{\mu_n} (\lambda_n x) h(\theta_1,\theta_2) dx
\] where $\psi(\theta_1,\theta_2) = Y^{k}_{n}(\theta_1,\theta_2)$.
In the case $\Delta^{(1)}_{S^{2}}(\phi) = \Delta^{(0)}_{S^{2}}(h) = 0$ we
have the equation
\begin{equation*}
dx\left((-d^2_x f_x -\frac{2}{x}d_x f_x + \frac{2}{x^2} f_x) h \right) = dx (\lambda^2 f_x  h).
\end{equation*}

Hence changing the base by $f_x = \partial_x(x^{- \frac{1}{2}} g_x)$ we find $g_x = J_{\frac{1}{2}}(\lambda_0 x)$ and
the solution is \[ D^{(1)} = \partial_x(x^{- \frac{1}{2}} J_{\frac{1}{2}}(\lambda_0 x)) h.\]

Since we know that the Hodge decomposition of square integrable forms into exact and coexact forms holds as in the smooth case \cite{Che3}, we conclude that the equation \eqref{1eS2} has the following complete set of linearly
independent $L^2$ solutions
\begin{equation}\label{sol1S2} \begin{aligned}
\alpha^{(1)}_{n} &= x^{\frac{1}{2}} J_{ \mu_n }(\lambda x) d^{\dag}_{S^2}(\sin\theta_2
Y^{k}_{n}(\theta_1,\theta_2) d\theta_1 \wedge d\theta_2)\\
\beta^{(1)}_{n} &=\partial_x(x^{-\frac{1}{2}}J_{ \mu_n }(\lambda x)) Y^{k}_{n}(\theta_1,\theta_2) dx
    + x^{-\frac{1}{2}}J_{ \mu_n }(\lambda x) d(Y^{k}_{n}(\theta_1,\theta_2))\\
\gamma^{(1)}_{n} &= x^{-\frac{3}{2}}J_{ \mu_n }(\lambda x)(n(n+1))\nu^{2} Y^{k}_{n}(\theta_1,\theta_2) dx
    + \partial_x(x^{\frac{1}{2}}J_{ \mu_n }(\lambda x)) d(Y^{k}_{n}(\theta_1,\theta_2))\\
D^{(1)} &= \partial_x(x^{-\frac{1}{2}}J_{  \frac{1}{2} }(\lambda x)) dx
\end{aligned}
\end{equation}

Next we determine the eigenvalues.
\subsubsection*{0-forms}
We have only two type of forms in \eqref{sol0S2}, that are,
 \begin{align*}\alpha^{(0)}_{n} &= x^{-\frac{1}{2}} J_{\mu_n} (\lambda x) \phi^{(0)}_{n}, \\
  E^{(0)}_n &= x^{-\frac{1}{2}}J_{\frac{1}{2}}(\lambda x)\phi^{(0)}_{n}.\end{align*}

Using the absolute BC in \eqref{absrel0S2} we have
\begin{align*} \partial_x (\alpha^{(0)}_{1,n})(l,\theta_1,\theta_2) &=\partial_x (x^{-\frac{1}{2}} J_{\mu_n}(\lambda x)\phi^{(0)}_{n})(l,\theta_1,\theta_2) = 0 \\
\partial_x(E^{(0)}_n)(l,\theta_1,\theta_2) &= \partial_x(x^{-\frac{1}{2}}J_{\frac{1}{2}}(\lambda x)\phi^{(0)}_{n})(l,\theta_1,\theta_2)=0\end{align*}
and so we need the square of the solutions of $-\frac{1}{2}l^{-\frac{3}{2}} J_{\mu_n}(l\lambda) +
l^{-\frac{1}{2}}\lambda J'_{\mu_n}(l\lambda) = 0$ and $-\lambda^{-\frac{1}{2}}J_{\frac{3}{2}}(l\lambda)=0$ that are
\begin{equation*}\label{ev:abs0}
  \lambda^2=\frac{\tilde{j}^2_{\mu_n,k,-}}{l^2} \quad \mbox{and} \quad \lambda^2=\frac{j^2_{\frac{3}{2},k}}{l^2}.
\end{equation*}

Using the relative BC in \eqref{absrel0S2} we have $\alpha^{(0)}_{n} (l,\theta_1,\theta_2)=E^{(0)}_{n} (l,\theta_1,\theta_2) = 0$
and the eigenvalues are
\begin{equation*}\label{ev:rel0} \lambda^2 = \frac{j^2_{\mu_n,k}}{l^2}\quad \mbox{and} \quad \lambda^2=\frac{j^2_{\frac{1}{2},k}}{l^2}.\end{equation*}

\subsubsection*{1-forms}
In this case we have the four types of forms in \eqref{sol1S2} ($s=1,2$):
 \begin{align*}
    \alpha^{(1)}_{n} &= x^{\frac{1}{2}} J_{ \mu_n }(\lambda x)\phi^{(1)}_{n}, \\
    \beta^{(1)}_{n} &=\partial_x(x^{-\frac{1}{2}}J_{ \mu_n }(\lambda x)) \phi^{(0)}_{n} dx
    + x^{-\frac{1}{2}}J_{ \mu_n }(\lambda x) d\phi^{(0)}_{n},\\
    \gamma^{(1)}_{n} &= x^{-\frac{3}{2}}J_{ \mu_n }(\lambda x)(n(n+1))\nu^{2} \phi^{(0)}_{n} dx
    + \partial_x(x^{\frac{1}{2}}J_{ \mu_n }(\lambda x)) d\phi^{(0)}_{n}, \\
    D^{(1)}_n &= \partial_x(x^{-\frac{1}{2}}J_{\frac{1}{2}}(\lambda x))\phi^{(0)}_{n}dx.
\end{align*}

Using the absolute BC in \eqref{absrel1S2} we have, for the four types,
\begin{align*}
\partial_x ((\alpha^{(1)}_{n})_{\theta_s})(l,\theta_1,\theta_2) &=
    \partial_x (x^{\frac{1}{2}} J_{\mu_n}(\lambda x))(l) = 0 \\
(\beta^{(1)}_{n})_x(l,\theta_1,\theta_2)=\partial_x((\beta^{(1)}_{n})_{\theta_s})(l,\theta_1,\theta_2) &=
    \partial_x(x^{-\frac{1}{2}}J_{\mu_n}(\lambda x))(l) =0\\
(\gamma^{(1)}_{n})_x(l,\theta_1,\theta_2) &= J_{\mu_n}(l\lambda)=0 \\
\partial_x(\gamma^{(1)}_{n})_{\theta_s}(l,\theta_1,\theta_2) &=
    -\frac{1}{4}J_{\mu_n}(l\lambda)+\lambda J'_{\mu_n}(l\lambda)+\lambda^2J''_{\mu_n}(l\lambda)=0 \\
\partial_x(D^{(1)}_n) &= \partial_x(x^{-\frac{1}{2}}J_{\frac{1}{2}}(\lambda x))(1)=0
\end{align*}
and so we obtain the square of the zeros of $\frac{1}{2} J_{\mu_n}(l\lambda) + l\lambda J'_{\mu_n}(l\lambda) = 0 $,
$-\frac{1}{2} J_{\mu_n}(l\lambda) + l\lambda J'_{\mu_n}(l\lambda) = 0 $, $J_{\mu_n}(l\lambda)=0$,
$\lambda^{-\frac{1}{2}}J_{-\frac{1}{2}}(l\lambda)=0$ and $-\lambda^{-\frac{1}{2}}J_{\frac{3}{2}}(l\lambda)=0$, that are
\begin{equation*}\label{ev:abs1}
 \lambda^2 = \frac{\tilde{j}^2_{\mu_n,k,+}}{l^2}, \quad \lambda^2 = \frac{\tilde{j}^2_{\mu_n,k,-}}{l^2}, \quad \lambda^2=\frac{j^2_{\mu_n,k}}{l^2}
 \quad  \mbox{and} \quad \lambda^2=\frac{j^2_{\frac{3}{2},k}}{l^2}.\end{equation*}

Using the relative BC in \eqref{absrel1S2} we have, for the five types,
\begin{align*}
 (\alpha^{(1)}_{n})_{\theta_s}(l,\theta_1,\theta_2) &= (x^{\frac{1}{2}} J_{\mu_n}(\lambda x))(l) = 0\\
 (\beta^{(1)}_{n})_{\theta_s}(l,\theta_1,\theta_2) &= J_{\mu_n}(l\lambda) = 0 \\
 \partial_x(x^2(\beta^{(1)}_{n})_{x})(l,\theta_1,\theta_2) &=
    -\frac{1}{4}J_{\mu_n}(l\lambda) + l\lambda J'_{\mu_n}(l\lambda) + (l\lambda)^{2} J''_{\mu_n}(l\lambda)=0 \\
 \partial_x((\gamma^{(1)}_{n})_{\theta_s})(l,\theta_1,\theta_2) &=
    \frac{1}{2} J_{\mu_n}(l\lambda) + l\lambda J'_{\mu_n}(l\lambda) = 0\\
 \partial_x(x^{2}(\gamma^{(1)}_{n})_{x})(l,\theta_1,\theta_2) &=
    \frac{1}{2} J_{\mu_n}(l\lambda) + l\lambda J'_{\mu_n}(l\lambda) = 0\\
 \partial_x(x^2(D^(1)_n)_x)(l,\theta_1,\theta_2) &= \partial_x(x^2\partial_x(x^{-\frac{1}{2}}J_{\frac{1}{2}}(\lambda x)))(1)=0
\end{align*}
and so we need the square of the zeros of $J_{\mu_n}(l\lambda)=0$, $-\frac{1}{4}J_{\mu_n}(l\lambda) + l\lambda
J'_{\mu_n}(l\lambda) + (l\lambda)^{2} J''_{\mu_n}(l\lambda)=0$, $\frac{1}{2} J_{\mu_n}(l\lambda) + l\lambda
J'_{\mu_n}(l\lambda) = 0$ and $J_{\frac{1}{2}}(l\lambda)=0$, that are
\begin{equation*}\label{ev:rel1} \lambda^2=\frac{j^2_{\mu_n,k}}{l^2} \;\mbox{(twice)},
\quad
 \lambda^2 = \frac{\tilde{j}^2_{\mu_n,k,+}}{l^2} \quad \mbox{and} \quad  \lambda^2 = \frac{j^2_{\frac{1}{2},k}}{l^2}. \end{equation*}

This concludes the proof for 0-forms and 1-forms. The result for 2-forms and 3-forms follows by duality.

\end{proof}

\subsection{Zeta determinant for some simple sequences}
\label{s4.b}

We recall in this section a result of \cite{Spr1}, that will be necessary in the following. For positive real numbers $l$ and $q$, define the non homogeneous quadratic Bessel zeta function by
\[
z(s,\nu,q,l)=\sum_{k=1}^\infty \left(\frac{j_{\nu,k}^2}{l^2}+q^2\right)^{-s},
\]
for $\Re(s)>\frac{1}{2}$. Then, $z(s,\nu,q,l)$ extends analytically to a meromorphic function in the complex plane with simple poles at $s=\frac{1}{2}, -\frac{1}{2}, -\frac{3}{2}, \dots$. The point $s=0$ is a regular point and
\beq\label{p00}
\begin{aligned}
z(0,\nu,q,l)&=-\frac{1}{2}\left(\nu+\frac{1}{2}\right),\\
z'(0,\nu,q,l)&=-\log\sqrt{2\pi l}\frac{I_\nu(lq)}{q^\nu}.
\end{aligned}
\eeq

In particular, taking the limit for $q\to 0$,
\[
z'(0,\nu,0,l)=-\log\frac{\sqrt{\pi}l^{\nu+\frac{1}{2}}}{2^{\nu-\frac{1}{2}}\Gamma(\nu+1)}.
\]

\subsection{Zeta determinant for double sequences of spectral type}
\label{s4.a}

We give in this section all the tools necessary in order to evaluate the zeta determinants appearing in the calculation of the analytic torsion. This is based on \cite{Spr3} \cite{Spr4} \cite{Spr5} and \cite{Spr9}. However, we present here a simplified version of the main result of those works (see in particular the more general formulation in Theorem 3.9 of \cite{Spr9} or the Spectral Decomposition Lemma of \cite{Spr5}), that is sufficient for our purpose here.

Let $S=\{a_n\}_{n=1}^\infty$ be a sequence of non vanishing complex numbers, ordered by increasing modules, with the unique point of accumulation at infinite. The positive real number (possibly infinite)
\[
s_0={\rm limsup}_{n\to\infty} \frac{\log n}{\log |a_n|},
\]
is called the exponent of convergence of $S$, and denoted by $\ec(S)$. We are only interested in sequences with  $\ec(S)=s_0<\infty$. If this is the case, then there exists a least integer $p$ such that the series $\sum_{n=1}^\infty a_n^{-p-1}$ converges absolutely. We assume $s_0-1< p\leq s_0$, we call the integer $p$ the genus of the sequence $S$, and we write $p=\ge(S)$.  We define the zeta function associated to $S$ by the uniformly convergent series
\[
\zeta(s,S)=\sum_{n=1}^\infty a_n^{-s},
\]
when $\Re(s)> \ec(S)$, and by analytic continuation otherwise.  We call the open subset $\rho(S)=\C-S$ of the complex plane the  resolvent set of $S$. For all $\lambda\in\rho(S)$, we define the Gamma function associated to $S$  by the canonical product
\beq\label{gamma}
\frac{1}{\Gamma(-\lambda,S)}=\prod_{n=1}^\infty\left(1+\frac{-\lambda}{a_n}\right)\e^{\sum_{j=1}^{\ge(S)}\frac{(-1)^j}{j}\frac{(-\lambda)^j}{a_n^j}}.
\eeq

When necessary in order to define the meromorphic branch of an analytic function, the domain for $\lambda$ will be the  open subset $\C-[0,\infty)$ of the complex plane.
We use the notation $\Sigma_{\theta,c}=\left\{z\in \C~|~|\arg(z-c)|\leq \frac{\theta}{2}\right\}$,
with $c\geq \delta> 0$, $0< \theta<\pi$. We use
$D_{\theta,c}=\C-\Sigma_{\theta,c}$, for the complementary (open) domain and $\Lambda_{\theta,c}=\partial \Sigma_{\theta,c}=\left\{z\in \C~|~|\arg(z-c)|= \frac{\theta}{2}\right\}$, oriented counter clockwise, for the boundary.
With this notation, we define now a particular subclass of sequences. Let $S$ be as above, and assume that $\ec(S)<\infty$, and that there exist $c>0$ and $0<\theta<\pi$, such that $S$ is contained in the interior of the sector $\Sigma_{\theta,c}$. Furthermore, assume that the logarithm of the associated Gamma function has a uniform asymptotic expansion for large $\lambda\in D_{\theta,c}(S)=\C-\Sigma_{\theta,c}$ of the following form
\[
\log\Gamma(-\lambda,S)\sim\sum_{j=0}^\infty a_{\alpha_j,0}(-\lambda)^{\alpha_j} +\sum_{k=0}^{\ge(S)} a_{k,1}(-\lambda)^k\log(-\lambda),
\]
where $\{\alpha_j\}$ is a decreasing sequence of real numbers. Then, we say that $S$ is a {\it totally regular sequence of spectral type with infinite order}. We call the open set $D_{\theta,c}(S)$ the asymptotic domain of $S$.

Next, let $S=\{\lambda_{n,k}\}_{n,k=1}^\infty$ be a double sequence of non
vanishing complex numbers with unique accumulation point at the
infinity, finite exponent $s_0=\ec(S)$ and genus $p=\ge(S)$. Assume if necessary that the elements of $S$ are ordered as $0<|\lambda_{1,1}|\leq|\lambda_{1,2}|\leq |\lambda_{2,1}|\leq \dots$. We use the notation $S_n$ ($S_k$) to denote the simple sequence with fixed $n$ ($k$). We call the exponents of $S_n$ and $S_k$ the relative exponents of $S$, and we use the notation $(s_0=\ec(S),s_1=\ec(S_k),s_2=\ec(S_n))$. We define relative genus accordingly.

\begin{defi} Let $S=\{\lambda_{n,k}\}_{n,k=1}^\infty$ be a double
sequence with finite exponents $(s_0,s_1,s_2)$, genus
$(p_0,p_1,p_2)$, and positive spectral sector
$\Sigma_{\theta_0,c_0}$. Let $U=\{u_n\}_{n=1}^\infty$ be a totally
regular sequence of spectral type of infinite order with exponent
$r_0$, genus $q$, domain $D_{\phi,d}$. We say that $S$ is
spectrally decomposable over $U$ with power $\kappa$, length $\ell$ and
asymptotic domain $D_{\theta,c}$, with $c={\rm min}(c_0,d,c')$,
$\theta={\rm max}(\theta_0,\phi,\theta')$, if there exist positive
real numbers $\kappa$, $\ell$ (integer), $c'$, and $\theta'$, with
$0< \theta'<\pi$,   such that:
\begin{enumerate}
\item the sequence
$u_n^{-\kappa}S_n=\left\{\frac{\lambda_{n,k}}{u^\kappa_n}\right\}_{k=1}^\infty$ has
spectral sector $\Sigma_{\theta',c'}$, and is a totally regular
sequence of spectral type of infinite order for each $n$;
\item the logarithmic $\Gamma$-function associated to  $S_n/u_n^\kappa$ has an asymptotic expansion  for large
$n$ uniformly in $\lambda$ for $\lambda$ in
$D_{\theta,c}$, of the following form
\beq\label{exp}
\hspace{30pt}\log\Gamma(-\lambda,u_n^{-\kappa} S_n)=\sum_{h=0}^{\ell}
\phi_{\sigma_h}(\lambda) u_n^{-\sigma_h}+\sum_{l=0}^{L}
P_{\rho_l}(\lambda) u_n^{-\rho_l}\log u_n+o(u_n^{-r_0}),
\eeq
where $\sigma_h$ and $\rho_l$ are real numbers with $\sigma_0<\dots <\sigma_\ell$, $\rho_0<\dots <\rho_L$, the
$P_{\rho_l}(\lambda)$ are polynomials in $\lambda$ satisfying the condition $P_{\rho_l}(0)=0$, $\ell$ and $L$ are the larger integers
such that $\sigma_\ell\leq r_0$ and $\rho_L\leq r_0$.
\end{enumerate}
\label{spdec}
\end{defi}

When a double sequence $S$ is spectrally decomposable over a simple sequence $U$, Theorem 3.9 of \cite{Spr9} gives a formula for the derivative of the associated zeta function at zero. In order to understand such a formula, we need to introduce some other quantities. First, we define the functions
\beq\label{fi1}
\Phi_{\sigma_h}(s)=\int_0^\infty t^{s-1}\frac{1}{2\pi i}\int_{\Lambda_{\theta,c}}\frac{\e^{-\lambda t}}{-\lambda} \phi_{\sigma_h}(\lambda) d\lambda dt.
\eeq

Next, by Lemma 3.3 of \cite{Spr9}, for all $n$, we have the expansions:
\beq\label{form}\begin{aligned}
\log\Gamma(-\lambda,S_n/{u_n^\kappa})&\sim\sum_{j=0}^\infty a_{\alpha_j,0,n}
(-\lambda)^{\alpha_j}+\sum_{k=0}^{p_2} a_{k,1,n}(-\lambda)^k\log(-\lambda),\\
\phi_{\sigma_h}(\lambda)&\sim\sum_{j=0}^\infty b_{\sigma_h,\alpha_j,0}
(-\lambda)^{\alpha_j}+\sum_{k=0}^{p_2} b_{\sigma_h,k,1}(-\lambda)^k\log(-\lambda),
\end{aligned}
\eeq
for large $\lambda$ in $D_{\theta,c}$. We set (see Lemma 3.5 of \cite{Spr9})
\beq\label{fi2}
\begin{aligned}
A_{0,0}(s)&=\sum_{n=1}^\infty \left(a_{0, 0,n} -\sum_{h=0}^\ell
b_{\sigma_h,0,0}u_n^{-\sigma_h}\right)u_n^{-\kappa s},\\
A_{j,1}(s)&=\sum_{n=1}^\infty \left(a_{j, 1,n} -\sum_{h=0}^\ell
b_{\sigma_h,j,1}u_n^{-\sigma_h}\right)u_n^{-\kappa s},
~~~0\leq j\leq p_2.
\end{aligned}
\eeq

We can now state the formula for the derivative at zero of the double zeta function. We give here a modified version of Theorem 3.9 of \cite{Spr9}, more suitable for our purpose here. This is based on the following fact. The key point in the proof of Theorem 3.9 of \cite{Spr9} is the decomposition given in Lemma 3.5 of that paper of the sum
\[
\mathcal{T}(s,\lambda, S,U)=\sum_{n=1}^\infty u_n^{-\kappa s} \log\Gamma(-\lambda, u_n^{-\kappa}S_n),
\]
in two terms: the regular part $\mathcal{P}(s,\lambda,S,U)$ and the remaining singular part. The regular part is obtained subtracting from $\T$ some terms constructed starting from the expansion of the logarithmic Gamma function given in equation (\ref{exp}), namely
\[
\P(s,\lambda,S,u)=\T(s,\lambda, S,U)-\sum_{h=0}^{\ell}
\phi_{\sigma_h}(\lambda) u_n^{-\sigma_h}+\sum_{l=0}^{L}
P_{\rho_l}(\lambda).
\]

Now, assume instead we subtract only the terms such that the zeta function $\zeta(s,U)$ has a pole at $s=\sigma_h$ or at $s=\rho_l$. Let $\hat \P(s,\lambda, S,U)$ be the resulting function. Then the same argument as the one used in Section 3 of \cite{Spr9} in order to prove Theorem 3.9 applies, and we obtain similar formulas for the values of the residue, and of the finite part of the zeta function $\zeta(s,S)$ and of its derivative at zero, with just two differences: first, in the all the sums, all the terms with index $\sigma_h$ such that $s=\sigma_h$ is not a pole of $\zeta(s,U)$ must be omitted; and second, we must substitute the terms $A_{0,0}(0)$ and $A_{0,1}'(0)$, with the finite parts $\Rz_{s=0}A_{0,0}(s)$, and $\Rz_{s=0}A_{0,1}'(s)$. The first modification is an obvious consequence of the substitution of the function $\P$ by the function $\hat \P$. The second modification, follows by the same reason noting that the function $A_{\alpha_j,k}(s)$ defined in Lemma 3.5 of \cite{Spr9}, are no longer regular at $s=0$ themselves. However, they both admits a meromorphic extension regular at $s=0$, using the extension of the zeta function $\zeta(s,U)$, and the expansion of the coefficients $a_{\alpha_j,k,n}$ for large $n$.
Thus we have the following result.

\begin{theo} \label{tt} The formulas of Theorem 3.9 of \cite{Spr9} hold if all the quantities with index $\sigma_h$ such that the zeta function $\zeta(s,U)$ has not a pole at $s=\sigma_h$ are omitted.
\end{theo}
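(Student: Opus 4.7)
The plan is to revisit the proof of Theorem 3.9 of \cite{Spr9} and track how each step is affected when the full subtraction $\P(s,\lambda,S,U)$ is replaced by the partial subtraction $\hat\P(s,\lambda,S,U)$ that only removes those asymptotic terms responsible for singularities at $s=0$. The guiding principle is that the difference $\hat\P - \P$ consists of summands $\phi_{\sigma_h}(\lambda)u_n^{-\sigma_h}$ (and analogous log-terms) with $\zeta(s,U)$ regular at $s=\sigma_h$; after the $n$-summation, each such contribution becomes a multiple of $\zeta(\sigma_h+\kappa s,U)$, which is analytic at $s=0$. Therefore, keeping or discarding these terms from the subtraction merely re-distributes regular contributions between the pieces of $\zeta(s,S)$.

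First I would verify that the Mellin--Barnes machinery of Section 3 of \cite{Spr9} applies with $\hat\P$ in place of $\P$. This requires that $\hat\P$ retains enough decay in $\lambda$ along the contour $\Lambda_{\theta,c}$ to justify interchanging sums and integrals, and that the resulting integrand has the pole structure in $s$ at $s=0$ predicted by the proof in \cite{Spr9}. Both properties are immediate from the uniform asymptotic control in Definition \ref{spdec}(2), since the omitted subtractions correspond to coefficients of $u_n^{-\sigma_h}$ with $\zeta(s,U)$ regular at $\sigma_h$, hence the associated $\zeta$-summed pieces cannot produce poles at $s=0$ and cannot destroy the contour estimates.

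Second I would traverse Lemmas 3.5--3.8 of \cite{Spr9} with the partial subtraction, step by step. In every sum indexed by $h \in \{0,\ldots,\ell\}$ appearing there, restrict $h$ to those indices for which $s=\sigma_h$ is a pole of $\zeta(s,U)$; this gives the simplification announced in the theorem. In the definitions (\ref{fi2}) of $A_{0,0}(s)$ and $A_{j,1}(s)$, applying the same restriction yields functions $\hat A_{0,0}(s)$ and $\hat A_{j,1}(s)$ whose large-$n$ asymptotics retain the unsubtracted terms; when summed against $u_n^{-\kappa s}$ these produce contributions of the form $b_{\sigma_h,*,*}\zeta(\sigma_h+\kappa s,U)$ for non-pole $\sigma_h$, which are analytic at $s=0$. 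Consequently $\hat A_{0,0}(s)$ and $\hat A_{j,1}(s)$ still extend meromorphically to a neighborhood of $s=0$ through the meromorphic continuation of $\zeta(s,U)$, and the replacements $A_{0,0}(0)\mapsto \Rz_{s=0}\hat A_{0,0}(s)$, $A_{0,1}'(0)\mapsto \Rz_{s=0}\hat A_{0,1}'(s)$ absorb exactly the extra regular contributions generated by the unsubtracted pieces.

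The main obstacle will be the bookkeeping that shows the restriction of the index range and the finite-part extraction balance against each other. Concretely, for each non-pole index $\sigma_h$, I must verify that the piece removed from the $h$-sums in Theorem 3.9 of \cite{Spr9} matches the piece simultaneously added to $\Rz_{s=0}\hat A_{0,0}(s)$ and to $\Rz_{s=0}\hat A_{0,1}'(s)$ through the identity $\Rz_{s=0}\zeta(\sigma_h+\kappa s,U)=\zeta(\sigma_h,U)$, together with the analogous computation for the derivative and for the contribution of $\Phi_{\sigma_h}$ defined in (\ref{fi1}). Once this cancellation identity is written out explicitly and verified, all remaining steps of the argument of Section 3 of \cite{Spr9} carry over verbatim, yielding the modified formulas claimed.
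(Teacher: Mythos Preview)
Your proposal is correct and follows essentially the same approach as the paper: both replace the full subtraction $\P$ by the partial subtraction $\hat\P$ removing only the terms associated with poles of $\zeta(s,U)$, observe that the omitted terms contribute $\zeta(\sigma_h+\kappa s,U)$-factors that are analytic at $s=0$, and then rerun the argument of Section~3 of \cite{Spr9} with the two bookkeeping modifications (restricting the $h$-sums and replacing $A_{0,0}(0)$, $A_{0,1}'(0)$ by their finite parts). Your write-up is somewhat more explicit about the cancellation/redistribution identity that underlies these two modifications, but the underlying idea and the structure of the argument are the same as in the paper.
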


Next, assuming some simplified pole structure for the zeta function $\zeta(s,U)$, sufficient for the present analysis, we state the main result of this section.

\begin{theo} \label{t4} Let $S$ be spectrally decomposable over $U$ as in Definition \ref{spdec}. Assume that $\zeta(s,U)$ has only one simple pole at $s=s_0$, and that the function $\Phi_{s_0}(s)$ has at most simple poles. Then,
$\zeta(s,S)$ is regular at $s=0$, and
\begin{align*}
\zeta(0,S)=&-A_{0,1}(0)+\frac{1}{\kappa}\Ru_{s=0}\Phi_{s_0}(s)\Ru_{s=s_0}\zeta(s,U),\\
\zeta'(0,S)=&-A_{0,0}(0)-A_{0,1}'(0)+\frac{\gamma}{\kappa}\Ru_{s=0}\Phi_{s_0}(s)\Ru_{s=s_0}\zeta(s,U)\\
&+\frac{1}{\kappa}\Rz_{s=0}\Phi_{s_0}(s)\Ru_{s=s_0}\zeta(s,U)+\Ru_{s=0}\Phi_{s_0}(s)\Rz_{s=s_0}\zeta(s,U).
\end{align*}

\end{theo}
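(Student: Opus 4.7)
The plan is to derive Theorem \ref{t4} as an immediate specialization of Theorem \ref{tt}. Starting from the general formulas of Theorem 3.9 of \cite{Spr9} (modified as per Theorem \ref{tt}), the hypothesis that $\zeta(s,U)$ has a unique simple pole at $s=s_0$ forces the omission of every term indexed by $\sigma_h\neq s_0$ in the expansion (\ref{exp}), as well as every $P_{\rho_l}$ contribution with $\rho_l\neq s_0$. At most one non-trivial contribution survives, corresponding to the unique index (say $h$) with $\sigma_h=s_0$; if no such index occurs at all, every sum vanishes and the stated formula still holds trivially.

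The core step is then to read off the value and the derivative at $s=0$ of the single surviving product $\Phi_{s_0}(s)\,\zeta(\kappa s+s_0,U)$. Writing
$\Phi_{s_0}(s)=a_{-1}/s+a_0+O(s)$ with $a_{-1}=\Ru_{s=0}\Phi_{s_0}(s)$ and $a_0=\Rz_{s=0}\Phi_{s_0}(s)$, and
$\zeta(\kappa s+s_0,U)=b_{-1}/(\kappa s)+b_0+O(s)$ with $b_{-1}=\Ru_{s=s_0}\zeta(s,U)$ and $b_0=\Rz_{s=s_0}\zeta(s,U)$, the product has Laurent expansion
\[
\Phi_{s_0}(s)\,\zeta(\kappa s+s_0,U)=\frac{a_{-1}b_{-1}}{\kappa s^{2}}+\frac{1}{s}\Bigl(a_{-1}b_{0}+\frac{a_0 b_{-1}}{\kappa}\Bigr)+O(1).
\]
The $s^{-2}$ coefficient couples with the Euler-Mascheroni constant $\gamma$ coming from the Mellin regularisation employed in \cite{Spr9} (concretely, from $\Gamma(s)=1/s-\gamma+O(s)$), producing the term $(\gamma/\kappa)\Ru_{s=0}\Phi_{s_0}(s)\Ru_{s=s_0}\zeta(s,U)$, while the $s^{-1}$ coefficient contributes the remaining two residue-times-finite-part terms in the stated formula for $\zeta'(0,S)$. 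The value $\zeta(0,S)$ comes from the residue contribution alone, with the factor $1/\kappa$ arising from the Jacobian of the change of variable $u=\kappa s+s_0$ that links the pole of $\zeta(\cdot,U)$ at $u=s_0$ to that of $\zeta(\kappa s+s_0,U)$ at $s=0$.

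Regularity of $\zeta(s,S)$ at $s=0$ is then automatic: the double pole of the product above is cancelled by the analogous double pole carried by the subtractions built into $A_{0,0}(s)$ and $A_{0,1}(s)$ of (\ref{fi2}), which by construction remove the asymptotic contributions of precisely those $u_n^{-\sigma_h}$ with $\sigma_h$ a pole location of $\zeta(s,U)$. The main obstacle is purely bookkeeping, namely verifying that under the single-pole hypothesis no further cross-terms survive the simplification and that the finite parts $\Rz_{s=0}A_{0,0}(s)$ and $\Rz_{s=0}A'_{0,1}(s)$ reduce to the ordinary values $A_{0,0}(0)$ and $A'_{0,1}(0)$ appearing in the statement; both facts are matters of matching Laurent coefficients and carry over verbatim from the proof of the full Theorem 3.9 in \cite{Spr9}.
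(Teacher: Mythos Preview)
Your proposal is correct and follows essentially the same approach as the paper: Theorem \ref{t4} is presented there not with an independent proof but as the specialization of Theorem \ref{tt} (itself a variant of Theorem 3.9 of \cite{Spr9}) obtained under the single-pole hypothesis on $\zeta(s,U)$, and you carry out exactly this reduction. You in fact supply more explicit detail than the paper does, spelling out the Laurent expansion of $\Phi_{s_0}(s)\,\zeta(\kappa s+s_0,U)$ and the origin of the $\gamma/\kappa$ term from $\Gamma(s)=1/s-\gamma+O(s)$, as well as addressing why the finite parts $\Rz_{s=0}A_{0,0}(s)$ and $\Rz_{s=0}A'_{0,1}(s)$ prescribed by Theorem \ref{tt} collapse to the ordinary values $A_{0,0}(0)$ and $A'_{0,1}(0)$ written in the statement.
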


This result should be compared with the Spectral Decomposition Lemma  of \cite{Spr5} and Proposition 1 of \cite{Spr6}.
The result of Theorem \ref{t4} extends to large class of sequences, where not necessary all the assumptions introduced in the definition of spectral decomposability hold. In particular, we need the following extension.

\begin{corol} Let $S_{(j)}=\{\lambda_{(j),n,k}\}_{n,k=1}^\infty$, $j=1,2$, be two double sequences that satisfy all the requirements of Definition \ref{spdec} of spectral decomposability over a common sequence $U$, with the same parameters $\kappa$, $\ell$, etc., except that the polynomials $P_{(j),\rho}(\lambda)$ appearing in condition (2) do not vanish for $\lambda=0$. Assume that the difference of such polynomials does satisfy this condition, namely that $P_{(1),\rho}(0)-P_{(2),\rho}(0)=0$. Then, the difference of the zeta functions $\zeta(s,S_{(1)})-\zeta(s,S_{(2)})$ is regular at $s=0$ and satisfies the formulas given in Theorem \ref{t4}.
\end{corol}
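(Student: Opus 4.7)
The plan is to re-examine the proof of Theorem \ref{t4} (i.e.\ Theorem 3.9 of \cite{Spr9}) applied separately to $S_{(1)}$ and $S_{(2)}$, isolate where the normalization $P_{\rho_l}(0)=0$ is used, and observe that the defect from this normalization enters linearly in $P_{(j),\rho_l}(0)$. First I would recall the structure of the proof: one writes
\[
\zeta(s,S_{(j)})=\frac{1}{\Gamma(s)}\int_0^\infty t^{s-1}\frac{1}{2\pi i}\int_{\Lambda_{\theta,c}} \e^{-\lambda t}\, \partial_\lambda \log\Gamma(-\lambda,S_{(j)})\, d\lambda\, dt,
\]
inserts the asymptotic expansion \eqref{exp}, and decomposes the resulting double sum over $n$ into a regular part $\hat{\mathcal P}(s,\lambda,S_{(j)},U)$ plus the singular contributions carried by $\phi_{(j),\sigma_h}(\lambda)\zeta(s,U)$ and by $P_{(j),\rho_l}(\lambda)\log u_n$ summations. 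The normalization $P_{\rho_l}(0)=0$ is invoked precisely to guarantee that the terms transferred to the singular side vanish at $\lambda=0$, so that their Mellin transforms contribute no spurious residue or finite part at $s=0$ beyond those appearing in Theorem \ref{t4}.

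Next I would exploit linearity. Since \eqref{exp} is additive in the coefficients $\phi_{(j),\sigma_h}$ and $P_{(j),\rho_l}$, every object entering the decomposition---namely $\Phi_{(j),\sigma_h}(s)$, $A_{(j),0,0}(s)$, $A_{(j),0,1}(s)$, the regular part $\hat{\mathcal P}$, and any correction term arising from a nonvanishing $P_{(j),\rho_l}(0)$---depends linearly on these coefficients. Subtracting the expressions for $S_{(1)}$ and $S_{(2)}$ therefore produces the same decomposition applied to a ``virtual'' asymptotic expansion with polynomials $\tilde P_{\rho_l}(\lambda)=P_{(1),\rho_l}(\lambda)-P_{(2),\rho_l}(\lambda)$ and functions $\tilde \phi_{\sigma_h}(\lambda)=\phi_{(1),\sigma_h}(\lambda)-\phi_{(2),\sigma_h}(\lambda)$. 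By hypothesis $\tilde P_{\rho_l}(0)=0$, so the virtual expansion now satisfies all the requirements of Definition \ref{spdec}, and the proof of Theorem \ref{t4} applies verbatim. This yields both the regularity of $\zeta(s,S_{(1)})-\zeta(s,S_{(2)})$ at $s=0$ and the stated formulas, read off from the linearity of $A_{0,0}$, $A_{0,1}$, and $\Phi_{s_0}$ in their inputs.

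The main obstacle is purely bookkeeping: one must verify that in the argument of \cite{Spr9} no step uses the hypothesis $P_{\rho_l}(0)=0$ in a nonlinear way, and that the only contributions sensitive to $P_{\rho_l}(0)$ are those that can be written as a multiple of $P_{(j),\rho_l}(0)$ times a function independent of $j$. Once this linearity audit is complete, the cancellation in the difference is automatic, and no new analytic input is required.
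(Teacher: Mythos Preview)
The paper states this corollary without an explicit proof, treating it as an immediate consequence of the discussion preceding Theorems \ref{tt} and \ref{t4}. Your proposed argument---observing that every ingredient in the decomposition of \cite{Spr9} depends linearly on the coefficients $\phi_{\sigma_h}$ and $P_{\rho_l}$, so that in the difference the effective polynomials $\tilde P_{\rho_l}=P_{(1),\rho_l}-P_{(2),\rho_l}$ satisfy $\tilde P_{\rho_l}(0)=0$ and the proof of Theorem \ref{t4} carries over verbatim---is exactly the reasoning implicit in the paper's presentation.
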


\subsection{The analytic torsion of a cone over the circle}
\label{s4.2}

It is easy to see that absolute and relative analytic torsions coincide up to sign in this case. Thus we consider absolute boundary conditions. By the analysis in Section \ref{s4.1}, the relevant zeta functions are
\begin{align*}
\zeta(s,\Delta^{(1)})&=\sum_{k=1}^\infty \frac{j_{0,k}^{-2s}}{l^{-2s}}
+\sum_{k=1}^\infty \frac{j_{1,k}^{-2s}}{l^{-2s}}+2\sum_{n,k=1}^\infty \frac{j_{\nu n,k}^{-2s}}{l^{-2s}}
+2\sum_{n,k=1}^\infty \frac{(j_{\nu n,k}')^{-2s}}{l^{-2s}},\\
\zeta(s,\Delta^{(2)})&=\sum_{k=1}^\infty \frac{j_{0,k}^{-2s}}{l^{-2s}}+2\sum_{n,k=1}^\infty
\frac{j_{\nu n,k}^{-2s}}{l^{-2s}},
\end{align*}
and by equation (\ref{analytic}), the torsion is ($a=\sin\alpha=\frac{1}{\nu}$)
\begin{align*}
\log T_{\rm abs}((C_\alpha S^1_{la},g_E);\rho)&=-\frac{1}{2}\zeta'(0,\Delta^{(1)}) +\zeta'(0,\Delta^{(2)}).
\end{align*}

Define the function
\begin{align*}
t(s)&=-\frac{1}{2}\zeta(s,\Delta^{(1)}) +\zeta(s,\Delta^{(2)})\\
&=\frac{1}{2}l^{2s}\sum_{k=1}^\infty j_{0,k}^{-2s}-\frac{1}{2}l^{2s}\sum_{k=1}^\infty j_{1,k}^{-2s}
+l^{2s}\sum_{n,k=1}^\infty j_{\nu n,k}^{-2s}-l^{2s}\sum_{n,k=1}^\infty (j_{\nu n,k}')^{-2s}\\
&=l^{2s}\left(\frac{1}{2}z_0(s)-\frac{1}{2}z_1(s)+Z(s)-\hat Z(s)\right),
\end{align*}
then
\begin{align*}
\log T_{\rm abs}((C_\alpha S^1_{la},g_E);\rho)=t'(0)=&\log l^2\left(\frac{1}{2}z_0(0)-\frac{1}{2}z_1(0)+Z(0)-\hat Z(0)\right)\\
&\hspace{20pt}+\frac{1}{2}z'_0(0)-\frac{1}{2}z'_1(0)+Z'(0)-\hat Z'(0).
\end{align*}

Using equations (\ref{p00})  of Section \ref{s4.a}, we compute $z_{0/1}(0)$ and $z'_{0/1}(0)$. We obtain
\beq\label{ttt}
\log T_{\rm abs}((C_\alpha S^1_{la},g_E);\rho)=\left(\frac{1}{4}+Z(0)-\hat Z(0)\right)\log l^2+Z'(0)-\hat Z'(0)-\frac{1}{2}\log 2.
\eeq

It remains to deal with the differences $Z(0)-\hat Z(0)$ and $Z'(0)-\hat Z'(0)$.
For we use Theorem \ref{t4}. The relevant sequences are the double sequences $S=\{j_{\nu n,k}^2\}$ or $\hat S=\{(j_{\nu n,k}')^2\}$, and the simple sequence $U=\{\nu n\}_{n=1}^\infty$, and $Z(s)=\zeta(s,S)$,
$\hat Z(s)=\zeta(s,\hat S)$. Using classical estimates for the zeros of Bessel function \cite{Wat}, we find  that the genus of $S$ is $1$, the genus of $U$ is $1$, and the relative genus of $S$ are $(1,0,0)$. Second, we check that $U$, $S_n$,  and $\hat S_n$ are totally regular sequences of spectral type. For $U$, this is trivial (see also \cite{Spr4} Section 3.1), since $\zeta(s,U)=\nu^{-s}\zeta(s)$, where $\zeta(s)$ is the Riemann zeta function. For $S$ and $\hat S$, we generalize the argument used in  \cite{Spr3} for the sequence $S$, as in \cite{Spr5} and \cite{Spr6}. Note that we have the following product representations (the first is classical, see for example \cite{Wat}, the second follows using the Hadamard factorization theorem)
\begin{align*}
I_\nu(z)=\frac{z^\nu}{2^\nu\Gamma(\nu+1)}\prod_{k=1}^\infty \left(1+\frac{z^2}{j_{\nu,k}^2}\right),\qquad
I_\nu'(z)=\frac{z^{\nu-1}}{2^\nu\Gamma(\nu)}\prod_{k=1}^\infty \left(1+\frac{z^2}{(j_{\nu,k}')^2}\right).\\
\end{align*}

Using these representations, we obtain the following representations for the Gamma functions associated to the sequences $S_n$ and $\hat S_n$. For further use, we give instead the representations for the Gamma functions associated to the sequences $S_n/u^2_n$, and $S'_n/u^2_n$. By the definition in equation (\ref{gamma}), with $z=\sqrt{-\lambda}$, we have
\begin{align*}
\log \Gamma(-\lambda,S_n/(\nu n)^2)=&-\log\prod_{k=1}^\infty \left(1+\frac{(-\lambda)(\nu n)^2}{j_{\nu n,k}^2}\right)\\
=&-\log I_{\nu n}(\nu n\sqrt{-\lambda})+(\nu n)\log\sqrt{-\lambda} \\
&+\nu n\log (\nu n)-\nu n\log 2-\log\Gamma(\nu n+1),\\
\log \Gamma(-\lambda,\hat S_n/(\nu n)^2)=&-\log\prod_{k=1}^\infty \left(1+\frac{(-\lambda)(\nu n)^2}{(j_{\nu n,k}')^2}\right)\\
=&-\log I'_{\nu n}(\nu n\sqrt{-\lambda})+(\nu n-1)\log\sqrt{-\lambda} \\
&+\nu n\log (\nu n)-\nu n\log 2-\log\Gamma(\nu n+1).
\end{align*}

A first consequence of this representations is that we have a complete asymptotic expansion of the Gamma functions
$\log \Gamma(-\lambda,S_n)$, and $\log \Gamma(-\lambda,\hat S_n)$, and therefore $S_n$ and $\hat S_n$ are sequences
of spectral type. Considering the expansions, it follows that they are both totally regular sequences of infinite order.

Next, we prove that $S$ and $\hat S$ are spectrally decomposable over $U$ with power $\kappa=2$ and length $\ell=2$, as in Definition \ref{spdec}. We have to show that the functions $\log \Gamma(-\lambda,S_n/u_n^2)$, and $\log \Gamma(-\lambda,\hat S_n/u_n^2)$ have the appropriate uniform expansions for large $n$. This follows using the uniform expansions for the Bessel functions given for example in \cite{Olv} (7.18), and Ex. 7.2
\[
I_{\nu}(\nu z)=\frac{\e^{\nu\sqrt{1+z^2}}\e^{\nu\log\frac{z}{1+\sqrt{1+z^2}}}}{\sqrt{2\pi \nu}(1+z^2)^\frac{1}{4}}\left(1+U_1(z)\frac{1}{\nu}+O(\nu^{-2})\right),
\]
with $U_1(z)=\frac{1}{8\sqrt{1+z^2}}-\frac{5}{24(1+z^2)^\frac{3}{2}}$, and
\[
I_{\nu}'(\nu z)=\frac{(1+z^2)^\frac{1}{4}\e^{\nu\sqrt{1+z^2}}\e^{\nu\log\frac{z}{1+\sqrt{1+z^2}}}}{\sqrt{2\pi \nu}z}\left(1+V_1(z)\frac{1}{\nu}+O(\nu^{-2})\right),
\]
with $V_1(z)=-\frac{3}{8\sqrt{1+z^2}}+\frac{7}{24(1+z^2)^\frac{3}{2}}$.
Using the classical expansion for the logarithm of the Euler Gamma function \cite{GZ} 8.344, we obtain, for large $n$, uniformly in $\lambda$,
\begin{align*}
\log \Gamma(-\lambda,S_n/u_n^2)=&\sum_{h=0}^\infty \phi_{h-1}(\lambda) u_n^{1-h}\\
=&\left(1-\log 2-\sqrt{1-\lambda}+\log(1+\sqrt{1-\lambda})\right)\nu n\\
&+\frac{1}{4}\log(1-\lambda)-\left(U_1(\sqrt{-\lambda})+\frac{1}{12}\right)\frac{1}{\nu n}+O\left(\frac{1}{(\nu n)^2}\right),\\
\log \Gamma(-\lambda,\hat S_n/u_n^2)=&\sum_{h=0}^\infty \hat\phi_{h-1}(\lambda) u_n^{1-h}\\
=&\left(1-\log 2-\sqrt{1-\lambda}+\log(1+\sqrt{1-\lambda})\right)\nu n\\
&-\frac{1}{4}\log(1-\lambda)-\left(V_1(\sqrt{-\lambda})+\frac{1}{12}\right)\frac{1}{\nu n}+O\left(\frac{1}{(\nu n)^2}\right),
\end{align*}
and hence in particular
\begin{align*}
\phi_1(\lambda)&=-\frac{1}{8}\frac{1}{(1-\lambda)^\frac{1}{2}}+\frac{5}{24}\frac{1}{(1-\lambda)^\frac{3}{2}}-\frac{1}{12},\\
\hat\phi_1(\lambda)&=\frac{3}{8}\frac{1}{(1-\lambda)^\frac{1}{2}}-\frac{7}{24}\frac{1}{(1-\lambda)^\frac{3}{2}}-\frac{1}{12}.
\end{align*}

Note that the length $\ell$ of the decomposition is precisely $2$. For the $\ec(U)=1$, and therefore the larger integer such that $h-1=\sigma_h\leq 1$ is $2$, since $\sigma_0=-1$, $\sigma_1=0$, $\sigma_2=1$. However, note that by Theorem \ref{tt}, only the term with $\sigma_h=1$, namely $h=2$, appears in the formula of Theorem \ref{t4}, since the unique pole of $\zeta(s,U)$ is at $s=1$.
We now apply the formula in Theorem \ref{t4}. First, since
\begin{align*}
\Rz_{s=1}\zeta(s,U)&=\frac{1}{\nu}(\gamma+\log\nu),&
\Ru_{s=1}\zeta(s,U)&=\frac{1}{\nu},\\
\end{align*}
it follows that
\beq\label{p}
\begin{aligned}
\zeta(0,S)-\zeta(0,\hat S)=&-A_{0,1}(0)+\hat A_{0,1}(0)+\frac{1}{2\nu}\Ru_{s=0}(\Phi_1(s)-\hat\Phi_1(s)),\\
\zeta'(0,S)-\zeta'(0,\hat S)=&-A_{0,0}(0)-A_{0,1}'(0)+\hat A_{0,0}(0)+\hat A_{0,1}'(0)\\
&\hspace{-60pt}+\frac{1}{2\nu}\Rz_{s=0}(\Phi_1(s)-\hat \Phi_1(s))+\frac{1}{\nu}\left(\frac{3}{2}\gamma+\log\nu\right)\Ru_{s=0}(\Phi_{1}(s)-\hat \Phi_1(s)).\\
\end{aligned}
\eeq

Second, by definition in equation (\ref{fi1}),
\begin{align*}
\Phi_1(s)&=\int_0^\infty t^{s-1}\frac{1}{2\pi i}\int_{\Lambda_{\theta,c}}\frac{\e^{-\lambda t}}{-\lambda} \left(-\frac{1}{8}\frac{1}{(1-\lambda)^\frac{1}{2}}+\frac{5}{24}\frac{1}{(1-\lambda)^\frac{3}{2}}-\frac{1}{12}\right) d\lambda dt,\\
\hat \Phi_1(s)&=\int_0^\infty t^{s-1}\frac{1}{2\pi i}\int_{\Lambda_{\theta,c}}\frac{\e^{-\lambda t}}{-\lambda} \left(\frac{3}{8}\frac{1}{(1-\lambda)^\frac{1}{2}}-\frac{7}{24}\frac{1}{(1-\lambda)^\frac{3}{2}}-\frac{1}{12}\right) d\lambda dt.\\
\end{align*}

These integrals are computed in Appendix \ref{appendixA}. We obtain
\begin{align*}
\Phi_1(s)=\frac{\Gamma\left(s+\frac{1}{2}\right)}{12\sqrt{\pi}s}(1+5s),&\hspace{30pt}
\hat \Phi_1(s)=\frac{\Gamma\left(s+\frac{1}{2}\right)}{12\sqrt{\pi}s}(1-7s),\\
\end{align*}
and hence
\begin{align*}
\Rz_{s=0}\Phi_1(s)&=\frac{5-\gamma}{12}-\frac{1}{6}\log 2,&\Ru_{s=0}\Phi_1(s)&=\frac{1}{12},\\
\Rz_{s=0}\hat \Phi_1(s)&=-\frac{7+\gamma}{12}-\frac{1}{6}\log 2,&\Ru_{s=0}\hat\Phi_1(s)&=\frac{1}{12}.\\
\end{align*}

Using equation (\ref{p}), this gives
\beq\label{p1}\begin{aligned}
Z(0)-\hat Z(0)=&-(A_{0,1}(0)-\hat A_{0,1}(0)),\\
Z'(0)-\hat Z'(0)=&-\left(A_{0,0}(0)+A_{0,1}'(0)-\hat A_{0,0}(0)-\hat A_{0,1}'(0)\right)+\frac{1}{2\nu}.\\
\end{aligned}
\eeq

Third, by equation (\ref{fi2}) and Theorem \ref{tt}, the terms $A_{0,0}(0)$ and $A'_{0,1}(0)$, are
\begin{align*}
A_{0,0}(s)&=\sum_{n=1}^\infty \left(a_{0, 0,n} -b_{1,0,0}u_n^{-1}\right)u_n^{-2 s},\\
A_{0,1}(s)&=\sum_{n=1}^\infty \left(a_{0, 1,n} -b_{1,0,1}u_n^{-1}\right)u_n^{-2 s}.
\end{align*}

Hence, we need the expansion for large $\lambda$ of the functions $\log\Gamma(-\lambda,S_n/u_n^2)$ and  $\phi_{1}(\lambda)$.
Using classical expansions for the Bessel functions and their derivative and the formulas in equation (\ref{form}), we obtain
\begin{align*}
a_{0,0,n}&=\frac{1}{2}\log 2\pi+\left(\nu n+\frac{1}{2}\right)\log\nu n-\nu n\log 2-\log\Gamma(\nu n+1),\\
a_{0,1,n}&=\frac{1}{2}\left(\nu n+\frac{1}{2}\right),\\
b_{1,0,0}&=-\frac{1}{12},\hspace{50pt}b_{1,0,1}=0,
\end{align*}
and
\begin{align*}
\hat a_{0,0,n}&=\frac{1}{2}\log 2\pi+\left(\nu n+\frac{1}{2}\right)\log\nu n-\nu n\log 2-\log\Gamma(\nu n+1),\\
\hat a_{0,1,n}&=\frac{1}{2}\left(\nu n-\frac{1}{2}\right),\\
\hat b_{1,0,0}&=-\frac{1}{12},\hspace{50pt}\hat b_{1,0,1}=0.
\end{align*}

This immediately shows that $A_{0,0}(0)=\hat A_{0,0}(0)$, that
\[
A_{0,1}(s)-\hat A_{0,1}(s)=\frac{1}{2}\sum_{n=1}^\infty u_n^{-2s}=\frac{1}{2}\zeta(2s,U)=\frac{1}{2}\nu^{-2s}\zeta(2s),
\]
and hence that
\begin{align*}
A_{0,1}(0)-\hat A_{0,1}(0)=-\frac{1}{4},\qquad
A'_{0,1}(0)-\hat A'_{0,1}(0)=\frac{1}{2}\log \nu-\frac{1}{2}\log 2\pi.
\end{align*}

Substitution in equation (\ref{p1}) gives
\begin{align*}
Z(0)-\hat Z(0)=&\frac{1}{4},\\
Z'(0)-\hat Z'(0)=&-\frac{1}{2}\log \nu+\frac{1}{2}\log 2\pi+\frac{1}{2\nu}.
\end{align*}

Substitution in equation (\ref{ttt}) gives 
\beq\label{tabs}\begin{aligned}
\log T_{\rm abs}((C_\alpha S^1_{l\sin\alpha}),g_E);\rho)=&\frac{1}{2}\log\frac{\pi}{\nu}l^2+\frac{1}{2\nu}.
\end{aligned}
\eeq

\subsection{The analytic torsion of a cone over the sphere}
\label{s4.3}

It is easy to see that absolute and relative analytic torsion coincides in this case. We consider absolute boundary conditions. By the analysis in Section \ref{s4.1}, the relevant zeta functions are
\begin{align*}
\zeta(s,\Delta^{(1)})&=\sum_{k=1}^\infty \frac{j_{\frac{3}{2},k}^{-2s}}{l^{-2s}}+\sum_{n,k=1}^\infty (2n+1)
\frac{j_{\mu_n,k}^{-2s}}{l^{-2s}}
+\sum_{n,k=1}^\infty (2n+1) \frac{\tilde j_{\mu_n ,k,\pm}^{-2s}}{l^{-2s}},\\
\zeta(s,\Delta^{(2)})&=\sum_{k=1}^\infty \frac{j_{\frac{1}{2},k}^{-2s}}{l^{-2s}}+ 2 \sum_{n,k=1}^\infty (2n+1)
\frac{j_{\mu_n,k}^{-2s}}{l^{-2s}}
+\sum_{n,k=1}^\infty (2n+1) \frac{\tilde j_{\mu_n ,k,+}^{-2s}}{l^{-2s}}, \\
\zeta(s,\Delta^{(3)})&=\sum_{k=1}^\infty \frac{j_{\frac{1}{2},k}^{-2s}}{l^{-2s}}+\sum_{n,k=1}^\infty (2n+1)
\frac{j_{\mu_n,k}^{-2s}}{l^{-2s}},
\end{align*}
and by equation (\ref{analytic}), the torsion is
\begin{align*}
\log T_{\rm abs}((C_\alpha S^2_{l\alpha},g_E),\rho)&=-\frac{1}{2}\zeta'(0,\Delta^{(1)}) +\zeta'(0,\Delta^{(2)}) -\frac{3}{2}\zeta'(0,\Delta^{(3)}).
\end{align*}

Define the function
\begin{align*}
t(s)&=-\frac{1}{2}\zeta(s,\Delta^{(1)}) +\zeta(s,\Delta^{(2)})-\frac{3}{2}\zeta(s,\Delta^{(3)})\\
&=-\frac{1}{2}\sum_{k=1}^\infty \frac{j_{\frac{1}{2},k}^{-2s}}{l^{-2s}} - \frac{1}{2}\sum_{k=1}^\infty\hspace{-1pt}
\frac{j_{\frac{3}{2},k}^{-2s}}{l^{-2s}}
+\frac{1}{2}\hspace{-2.3pt}\sum_{n,k=1}^\infty (2n+1) \frac{\tilde j_{\mu_n,k,+}^{-2s}}{l^{-2s}}-\frac{1}{2}\hspace{-2.3pt} \sum_{n,k=1}^\infty\hspace{-1pt}
(2n+1) \frac{\tilde j_{\mu_n,k,-}^{-2s}}{l^{-2s}}\\
&=l^{2s}\left(-\frac{1}{2}z_{\frac{1}{2}}(s)-\frac{1}{2}z_{\frac{3}{2}}(s)+\frac{1}{2} Z_+(s)-\frac{1}{2} Z_-(s)\right),
\end{align*}
then
\begin{align*}
\log T_{\rm abs}((C_\alpha S^2_{l\alpha},g_E),\rho)=t'(0)=&\log l^2\left(-\frac{1}{2} z_{\frac{1}{2}}(0)-
\frac{1}{2}z_{\frac{3}{2}}(0)+\frac{1}{2} Z_+(0)-\frac{1}{2} Z_-(0)\right)\\
&\hspace{20pt}-\frac{1}{2}z'_{\frac{1}{2}}(0)-\frac{1}{2}z'_{\frac{3}{2}}(0)+\frac{1}{2} Z'_+(0)-\frac{1}{2} Z'_-(0).
\end{align*}

Using equations (\ref{p00}) of Section \ref{s4.a}, we compute
\beq\label{ttt2}
\log T_{\rm abs}= \left(\frac{3}{4} + \frac{1}{2} Z_+(0) - \frac{1}{2} Z_-(0) \right) \log l^{2} +
\frac{1}{2}Z_+'(0)-\frac{1}{2}Z_-'(0)+\frac{1}{2}\log\frac{4}{3}
\eeq

It remains to deal with the differences $Z_+(0)- Z_-(0)$ and $Z_+'(0)- Z_-'(0)$.
For we use Theorem \ref{t4}, in the form given in the corollary. The relevant sequences are the double sequences $S_\pm=\{\tilde j^2_{\mu_n,k,\pm}\}$, and the simple sequence $U=\{(2n+1):\mu_n\}_{n=1}^\infty$,
where
\[
u_n=\mu_n=\sqrt{\nu^2 n(n+1)+\frac{1}{4}},
\]
and $Z_\pm(s)=\zeta(s,S_\pm)$. Using classical estimates for the zeros of Bessel function \cite{Wat},  the genus of $S_\pm$ is $0$, the genus of $U$ is $2$, and the relative genus of $S$ are $(1,0,1)$. This only differs from the case of the circle by $\ge(S_{\pm,k})$, with fixed $k$. Using classical estimates for the zeros of the Bessel function, the behavior of this sequence is given by the behavior of the sequence of the eigenvalues of the Laplacian on the sphere $S^2$, that is known. In particular, we recall the main features here below.
We check that $U$, and $S_{\pm,n}$ are totally regular sequences of spectral type. By definition of the sequence
$U$,  $\zeta(s,U)=\nu^{-s} \zeta(s,L_{\frac{1}{4\nu^2}})$, where $L_q=\{(2n+1):\sqrt{n(n+1)+q}\}_{n=1}^\infty$. Hence,  $U$ is related to the sequence of the eigenvalues of the Laplacian on the 2 sphere shifted by some positive constant $q$. More precisely,
$\zeta(2s,L_0)=\zeta(s,\Sp_+ \Delta^{(0)}_{S^2})$.
The zeta function $\zeta(s,\Sp_+ \Delta^{(0)}_{S^2})$ has been studied in \cite{Spr4}, Section 3.3, where it was proved that $\ec(\Sp_+\Delta^{(0)}_{S^2})=\ge(\Sp_+ \Delta^{(0)}_{S^2})=1$, and that $\Sp_+ \Delta^{(0)}_{S^2}$ is a totally regular sequence of spectral type with infinite order, by giving the explicit formula for the associated Gamma function $\Gamma(-\lambda,U)$ in terms of the Barnes $G$ function. It follows that $\ec(U)=\ge(U)=2$, and that $U$ is a totally regular sequence of spectral type with infinite order. Also, $\zeta(s,\Sp_+ \Delta^{(0)}_{S^2})$ has one simple pole at $s=1$, with residues
\begin{align*}
&\Rz_{s=1}\zeta(s,\Sp_+ \Delta^{(0)}_{S^2})=2\gamma,&\Ru_{s=1}\zeta(s,\Sp_+ \Delta^{(0)}_{S^2})=1,\\
\end{align*}
and hence, $\zeta(s,L_0)$ has one simple pole at $s=2$, with the same residues. Expanding the power of the binomial, we have that
\begin{align*}
\zeta(s,L_q)&=\zeta(s,L_0)+f(s),
\end{align*}
where $f(s)$ is regular at $s=2$. Therefore,
\begin{align*}
&\Rz_{s=2}\zeta(s,L_q)=2\gamma+f(2),&\Ru_{s=2}\zeta(s,L_q)=2,\\
\end{align*}
and
\begin{align*}
\zeta(s,U)=\nu^{-s}\zeta(s,L_q)=\frac{2}{\nu^2}\frac{1}{s-2}+K,
\end{align*}
near $s=2$, where $K$ is some finite constant.
For $S_\pm$, we proceed as follows. Let define the function
\[
G^\pm_\nu(z)=\pm\frac{1}{2}J_\nu(z)+zJ'_\nu(z).
\]

Recalling the series definition of the Bessel function
\[
J_\nu(z)=\frac{z^\nu}{2^\nu}\sum_{k=0}^\infty \frac{(-1)^kz^{2k}}{2^{2k}k!\Gamma(\nu+k+1)},
\]
we obtain that near $z=0$
\[
G_\nu^\pm(z) =\left(1\pm\frac{1}{2\nu}\right) \frac{z^\nu}{2^\nu\Gamma(\nu)}.
\]

This means that the function $\hat G^\pm_\nu(z)=z^{-\nu} G^\pm_\nu(z)$ is an even function of $z$.
Let $z_{\nu,k,\pm}$ be the positive zeros of $G^\pm_\nu(z)$ arranged in increasing order. By the Hadamard factorization Theorem, we have the product expansion
\[
\hat G^\pm_\nu(z)=\hat G^\pm_\nu(z){\prod_{k=-\infty}^{+\infty}}'\left(1-\frac{z}{z_{\nu,k,\pm}}\right),
\]
and therefore
\[
G^\pm_\nu(z)=\left(1\pm\frac{1}{2\nu}\right)\frac{z^\nu}{2^\nu\Gamma(\nu)}
\prod_{k=1}^{\infty}\left(1-\frac{z^2}{z^2_{\nu,k,\pm}}\right).
\]

Next,  recalling that (when $-\pi<\arg(z)<\frac{\pi}{2}$)
\begin{align*}
J_\nu(iz)&=\e^{\frac{\pi}{2}i\nu} I_\nu(z),\\
J'_\nu(iz)&=\e^{\frac{\pi}{2}i\nu}\e^{-\frac{\pi}{2}i} I'_\nu(z),\\
\end{align*}
we obtain
\[
G_\nu^\pm(iz)=\e^{\frac{\pi}{2}i\nu}\left(\pm\frac{1}{2}I_\nu(z)+zI'_\nu(z)\right).
\]

Thus, we define (for $-\pi<\arg(z)<\frac{\pi}{2}$)
\beq\label{pop}
H^\pm_\nu(z)=\e^{-\frac{\pi}{2}i\nu}G_\nu^\pm(i z),
\eeq
and hence
\begin{align*}
H^\pm_\nu(z)&=\pm\frac{1}{2}I_\nu(z)+zI'_\nu(z)=\left(1\pm\frac{1}{2\nu}\right)\frac{z^\nu}{2^\nu\Gamma(\nu)}
\prod_{k=1}^{\infty}\left(1+\frac{z^2}{z^2_{\nu,k,\pm}}\right).
\end{align*}

Using these representations, we obtain the following representations for the Gamma functions associated to the sequences $S_{\pm,n}$. By the definition in equation (\ref{gamma}), with $z=\sqrt{-\lambda}$, we have
\begin{align*}
\log \Gamma(-\lambda,S_{\pm,n})=&-\log\prod_{k=1}^\infty \left(1+\frac{(-\lambda)}{\tilde j_{\mu_n,k,\pm}^2}\right)\\
=&-\log H^\pm_{\mu_n}(\sqrt{-\lambda})+\mu_n\log\sqrt{-\lambda} -\log2^{\mu_n}\Gamma(\mu_n)+\log\left(1\pm\frac{1}{2\mu_n}\right)\hspace{-4pt}.
\end{align*}

A first consequence of this representations is that we have a complete asymptotic expansion of the Gamma functions
$\log \Gamma(-\lambda,S_{\pm,n})$, and therefore $S_n$ and $\hat S_n$ are sequences
of spectral type. Considering the expansions, it follows that they are both totally regular sequences of infinite order.

Next, we prove that $S_\pm$ are spectrally decomposable over $U$ with power $\kappa=2$ and length $\ell=3$, as in Definition \ref{spdec}. We have to show that the functions $\log \Gamma(-\lambda,S_{\pm,n}/u_n^2)$, have the appropriate uniform expansions for large $n$. We have
\begin{align*}
\log \Gamma(-\lambda,S_{\pm,n}/\mu^2_n)=&-\log H^\pm_{\mu_n}(\mu_n\sqrt{-\lambda})+\mu_n\log\sqrt{-\lambda}+\mu_n\log\mu_n\\ &-\mu_n\log2 -\log\Gamma(\mu_n)+\log\left(1\pm\frac{1}{2\mu_n}\right).
\end{align*}

Recalling the expansions given in Section \ref{s4.2}, we obtain
\begin{align*}
H^\pm_\nu(\nu z)
&=\sqrt{\nu}(1+z^2)^\frac{1}{4}\frac{\e^{\nu\sqrt{1+z^2}}\e^{\nu\log\frac{z}{1+\sqrt{1+z^2}}}}{\sqrt{2\pi }}\\
&\hspace{30pt}\left(1+W_{1,\pm}(z)\frac{1}{\nu}+W_{2,\pm}(z)\frac{1}{\nu^2}+O(\nu^{-3})\right),
\end{align*}
where $p=\frac{1}{(1-\lambda)^\frac{1}{2}}$, and
\begin{align*}
&W_{1,\pm}(p)=V_1(p)\pm\frac{1}{2}p,&W_{2,\pm}(p)=V_2(p)\pm \frac{1}{2}pU_1(p),
\end{align*}
\begin{align*}
&W_{1,+}(p)=\frac{1}{8}p+\frac{7}{24}p^3,&W_{2,+}(p)=-\frac{7}{128}p^2+\frac{79}{192}p^4-\frac{455}{1152}p^6,\\
&W_{1,-}(p)=-\frac{7}{8}p+\frac{7}{24}p^3,&W_{2,-}(p)=-\frac{28}{128}p^2+\frac{119}{192}p^4-\frac{455}{1152}p^6.\\
\end{align*}

This gives,
\begin{align*}
\log\Gamma(-\lambda, S_{n,\pm}/\mu^2_n)=&\sum_{h=0}^\infty \phi_{h-1,\pm}(\lambda) \mu_n^{1-h}\\
=&\left(1-\sqrt{1-\lambda}+\log(1+\sqrt{1-\lambda})-\log 2\right)\mu_n\\
&-\frac{1}{4}\log(1-\lambda)+\left(-W_{1,\pm}(\sqrt{-\lambda})\pm \frac{1}{2}-\frac{1}{12}\right)\frac{1}{\mu_n}\\
&+\left(-W_{2,\pm}(\sqrt{-\lambda})+\frac{1}{2}W_{1,\pm}^2(\sqrt{-\lambda})-\frac{1}{8}\right)\frac{1}{\mu^2_n}+O\left(\frac{1}{\mu_n^3}\right),
\end{align*}
and hence
\begin{align*}
\phi_{1,+}(\lambda)&=-\frac{1}{8}\frac{1}{(1-\lambda)^\frac{1}{2}}-\frac{7}{24}\frac{1}{(1-\lambda)^\frac{3}{2}}+\frac{5}{12},\\
\phi_{1,-}(\lambda)&=\frac{7}{8}\frac{1}{(1-\lambda)^\frac{1}{2}}-\frac{7}{24}\frac{1}{(1-\lambda)^\frac{3}{2}}-\frac{7}{12},
\end{align*}
\beq\label{pu}
\begin{aligned}
\phi_{2,+}(\lambda)&=\frac{1}{16}\frac{1}{1-\lambda}-\frac{3}{8}\frac{1}{(1-\lambda)^2}+\frac{7}{16}\frac{1}{(1-\lambda)^3}-\frac{1}{8},\\
\phi_{2,-}(\lambda)&=\frac{9}{16}\frac{1}{1-\lambda}-\frac{7}{8}\frac{1}{(1-\lambda)^2}+\frac{7}{16}\frac{1}{(1-\lambda)^3}-\frac{1}{8}.\\
\end{aligned}
\eeq

Note that the length $\ell$ of the decomposition is precisely $3$. For the $\ec(U)=2$, and therefore the larger integer such that $h-1=\sigma_h\leq 2$ is $3$, since $\sigma_0=-1$, $\sigma_1=0$, $\sigma_2=1$, $\sigma_3=2$. However, note that by Theorem \ref{tt}, only the term with $\sigma_h=2$, namely $h=3$, appears in the formula of Theorem \ref{t4}, since the unique pole of $\zeta(s,U)$ is at $s=2$.

We now apply the formula in Theorem \ref{t4}. First, since $\kappa=2$, $\Rz_{s=2}\zeta(s,U)=K$, and
$\Ru_{s=2}\zeta(s,U)=\frac{2}{\nu^2}$, we obtain
\begin{align*}
\zeta(0,S_+) - \zeta(0,S_-)=&-A_{0,1,+}(0)+A_{0,1,-}(0) + \frac{1}{\nu^2} \Ru_{s=0} (\Phi_{2,+}(s) - \Phi_{2,-}(s))\\
\zeta'(0,S_+)-\zeta'(s,S_-)=&-(A_{0,0,+}(0)+A_{0,1,+}'(0)-A_{0,0,-}(0)- A_{0,1,-}'(0))\\
&+\frac{1}{\nu^2}\Rz_{s=0}(\Phi_{2,+}(s)-\Phi_{2,-}(s))\\
&+\left(\frac{\gamma}{\nu^2}+K\right)\Ru_{s=0}(\Phi_{2,+}(s)-\Phi_{2,-}(s)).\\
\end{align*}

Second, by equation (\ref{pu}),
\[
\phi_{2,+}(\lambda)-\phi_{2,-}(\lambda)=-\frac{1}{2}\left(\frac{1}{1-\lambda}-\frac{1}{(1-\lambda)^2}\right),
\]
and hence using the definition in equation (\ref{fi1}),
\begin{align*}
\Phi_{2,+}(s)-\Phi_{2,-}(s)&=-\frac{1}{2}\int_0^\infty t^{s-1}\frac{1}{2\pi i}\int_{\Lambda_{\theta,c}}\frac{\e^{-\lambda t}}{-\lambda}\left(\frac{1}{1-\lambda}-\frac{1}{(1-\lambda)^2}\right).\\
\end{align*}

Using the formula in Appendix \ref{appendixA}, we obtain
\begin{align*}
\Phi_{2,+}(s)-\Phi_{2,-}(s)&=\frac{1}{2}\Gamma(s+1),\\
\end{align*}
and hence
\begin{align*}
&\Rz_{s=0}(\Phi_{2,+}(s)-\Phi_{2,-}(s))=\frac{1}{2},&\Ru_{s=0}(\Phi_{2,+}(s)-\Phi_{2,-}(s))=0.\\
\end{align*}

This gives
\beq\label{pi}
\begin{aligned}
Z_+(0)-Z_-(0)&= -A_{0,1,+}(0) + A_{0,1,-}(0)\\
Z_+'(0)-Z_-'(0)&=\zeta'(0,S_+)-\zeta'(s,S_-)\\
=&-(A_{0,0,+}(0)+A_{0,1,+}'(0)-A_{0,0,-}(0)- A_{0,1,-}'(0))+\frac{1}{2\nu^2}.\\
\end{aligned}
\eeq

Third, by equation (\ref{fi2}) and Theorem \ref{tt}, the terms $A_{0,0}(s)$ and $A_{0,1}(s)$, are
\begin{align*}
A_{0,0,\pm}(s)&=\sum_{n=1}^\infty \left(a_{0, 0,n,\pm} -b_{1,0,0,\pm}u_n^{-1}\right)u_n^{-2 s},\\
A_{0,1,\pm}(s)&=\sum_{n=1}^\infty \left(a_{0, 1,n,\pm} -b_{1,0,1,\pm}u_n^{-1}\right)u_n^{-2 s}.
\end{align*}

Hence, we need the expansion for large $\lambda$ of the functions $\log\Gamma(-\lambda,S_{n,\pm}/u_n^2)$ and  $\phi_{2,\pm}(\lambda)$. Using equations (\ref{pop}) and the definition, we obtain
\[
H^\pm_\nu(z)\sim \frac{\sqrt{z}\e^z}{\sqrt{2\pi}}\left(1+\sum_{k=1}^\infty b_kz^{-k}\right)+O(\e^{-z}),
\]
for large $z$. Therefore,
\begin{align*}
\log\Gamma(-\lambda,S_{n,\pm}/\mu_n^2)=&-\mu_n \sqrt{-\lambda}+\frac{1}{2}\left(\mu_n-\frac{1}{2}\right)\log(-\lambda)
+\frac{1}{2}\log 2\pi\\
&+\left(\mu_n-\frac{1}{2}\right)\log\mu_n
-\log 2^{\mu_n}\Gamma(\mu_n)\\
&+\log\left(1\pm\frac{1}{2\mu_n}\right) +O\left(\frac{1}{\sqrt{-\lambda}}\right).
\end{align*}

Also, from equation (\ref{pu}),
\begin{align*}
\phi_{2,\pm}(\lambda)=-\frac{1}{8}+O\left(\frac{1}{-\lambda}\right).
\end{align*}

Thus,
\begin{align*}
a_{0,0,n,\pm}&=\frac{1}{2}\log 2\pi+\left(\mu_n-\frac{1}{2}\right)\log\mu_n-\log 2^{\mu_n}\Gamma(\mu_n)
+\log\left(1\pm\frac{1}{2\mu_n}\right),\\
a_{0,1,n,\pm}&=\frac{1}{2}\left(\mu_n-\frac{1}{2}\right),\\
b_{2,0,0,\pm}&=-\frac{1}{8},\hspace{30pt}b_{2,0,1,\pm}=0.\\
\end{align*}

This immediately shows that $A_{0,1,+}(s)=A_{0,1,-}(s)$, and therefore $Z_+(0)-Z_-(0) = 0$. Next,
\begin{align*}
A_{0,0,+}(s)-A_{0,0,-}(s)&=\sum_{n=1}^\infty (2n+1) \mu_n^{-2s}\left(\log\left(1+\frac{1}{2\mu_n}\right)-\log\left(1-\frac{1}{2\mu_n}\right)\right)\\
&=F(s,\nu).
\end{align*}

Note that this series converges uniformely for $\Re(s)>2$, but using the analytic extension of the zeta function $\zeta(s,U)$, has an analytic extension that is regular at $s=0$.
Substitution in equation (\ref{pi}) gives
\begin{align*}
Z_+'(0)- Z_-'(0)=&-F(0,\nu)+\frac{1}{2\nu^2}.
\end{align*}

Substitution in equation (\ref{ttt2}) gives
\beq\label{te2}
\begin{aligned}
\log T((C_\alpha S^2_{l\sin\alpha},g_E),\rho))=&\frac{1}{2}\log\frac{4 l^3}{3}-\frac{1}{2}F(0,\csc\alpha)+\frac{1}{4}\sin^2\alpha.
\end{aligned}
\eeq

We give in the Appendix \ref{appendixB} a series representation for the $F(0,\nu)$ for $\nu>1$. Consider here the case
$\nu=1$ deserves independent treatment. Note that, when $\mu=1$, $\mu_n=\sqrt{n(n+1)+\frac{1}{4}}=n+\frac{1}{2}$,
and therefore
\begin{align*}
F(s,1)
=&2^{2s}\sum_{n=1}^\infty (2n+1)^{1-2s}(\log(n+1)-\log n).
\end{align*}

For $\Re(s)>2$, due to absolute convergence, we can rearrange the terms in the sum. We obtain
\begin{align*}
F(s,1)=&-2^{2s}\sum_{n=1}^\infty \left((2n+1)^{1-2s}-(2n-1)^{1-2s}\right)\log n\\
=&\sum_{j=0}^\infty \binom{1-2s}{j}\frac{(1-(-1)^j)}{2^j} \zeta'(2s+j-1)\\
=&(1-2s)\zeta'(2s)+\sum_{k=1}^\infty \binom{1-2s}{2k+1}\frac{\zeta'(2s+2k)}{2^{2k+1}},\\
\end{align*}
and hence, by substitution in equation (\ref{te2}),
\begin{align*}
\log T((C_\frac{\pi}{2} S^2_{l},g_E),\rho)=\log T((D^3_l,g_E),\rho)
=&\frac{1}{2}\log\frac{4\pi l^3}{3}+\frac{1}{2}\log 2+\frac{1}{4}.
\end{align*}

{\bf Acknowledgments}

One of the authors, M.S., thanks W. M\"{u}ller for usefull discussion, remarks and suggestions.

\section{Appendix A}
\label{appendixA}

We give here a formula for a contour integral appearing in the text. The proof is in \cite{Spr3} Section 4.2. Let
$\Lambda_{\theta,c}=\{\lambda\in\C~|~|\arg(\lambda-c)|=\theta\}$,
$0<\theta<\pi$, $0<c<1$, $a$ real, then
\[
\int_0^\infty t^{s-1} \frac{1}{2\pi
i}\int_{\Lambda_{\theta,c}}\frac{\e^{-\lambda
t}}{-\lambda}\frac{1}{(1-\lambda)^a}d\lambda
dt=\frac{\Gamma(s+a)}{\Gamma(a)s}.
\]

\section{Appendix B}
\label{appendixB}

We give a power series representation for the function $F(0,\nu)$ for $\nu>1$. Assume $\Re(s)>2$, then
\begin{align*}
F(s,\nu)=&\sum_{n=1}^\infty (2n+1) \mu_n^{-2s}\left(\log\left(1+\frac{1}{2\mu_n}\right)-\log\left(1-\frac{1}{2\mu_n}\right)\right)\\
=&\sum_{n=1}^\infty (2n+1) \mu_n^{-2s}\sum_{k=0}^\infty \frac{2^{-2k}}{2k+1} \mu^{-2k-1}\\
=&\sum_{k=0}^\infty \frac{1}{(2k+1)2^{2k}}\sum_{n=1}^\infty (2n+1) \mu_n^{-2s-2k-1}.\\
\end{align*}

Now,
\[
\mu_n^{2x}=\left(\nu^2 n(n+1)+\frac{1}{4}\right)^x= \sum_{j=0}^\infty \frac{1}{2^{2j}}\binom{x}{j}(n(n+1))^{x-j}\nu^{2x-2j},
\]
and therefore
\[
F(s,\nu)=\sum_{k=0}^\infty \frac{1}{(2k+1)2^{2k}}\sum_{j=0}^\infty \frac{1}{2^{2j}}\binom{-s-k-\frac{1}{2}}{j}\frac{\zeta(s+k+j+\frac{1}{2},\Sp_+\Delta^{(0)}_{S^2})}{\nu^{2s+2k+2j+1}},
\]
where
\[
\zeta(s,\Sp_+\Delta^{(0)}_{S^2})=\sum_{n=1}^\infty (2n+1)(n(n+1))^{-s}.
\]

Since the unique pole of the meromorphic extension of $\zeta(s,\Sp_+\Delta^{(0)}_{S^2})$ is at $s=1$, writing
\begin{align*}
F(s,\nu)=&\zeta(s,\Sp_+\Delta^{(0)}_{S^2})\nu^{-2s-1}\\
&\hspace{-33pt}+\sum_{\substack{j,k=0,\\ j+k\not=0}}^\infty \frac{1}{(2k+1)2^{2k}} \frac{1}{2^{2j}}\binom{-s-k-\frac{1}{2}}{j}\frac{\zeta(s+k+j+\frac{1}{2},\Sp^{(0)}_+\Delta_{S^2})}{\nu^{2s+2k+2j+1}},
\end{align*}
and using the analytic extension of $\zeta(s,\Sp_+\Delta^{(0)}_{S^2})$, we obtain
\begin{align*}
F(0,\nu)=\zeta(\frac{1}{2},\Sp_+\Delta^{(0)}_{S^2})\frac{1}{\nu}
\hspace{-2pt}+\hspace{-5pt}\sum_{\substack{j,k=0,\\ j+k\not=0}}^\infty \frac{1}{(2k+1)2^{2k}} \frac{1}{2^{2j}}\binom{-k-\frac{1}{2}}{j}\frac{\zeta(k+j+\frac{1}{2},\Sp_+\Delta^{(0)}_{S^2})}{\nu^{2k+2j+1}}.
\end{align*}

It is easy to see that the coefficient in  the power series above are all convergent series, and can be evaluated numerically. The leading  term requires independent treatment. Using the theorem of Plana as in \cite{Spr0}, we obtain
\begin{align*}
\zeta(\frac{1}{2},\Sp_+\Delta^{(0)}_{S^2})=&-\frac{5}{4}\sqrt{2}+6\int_0^\infty \frac{(y^4+y^2+4)^{-\frac{1}{4}}}{\e^{2\pi y}-1}\sin\left(\frac{1}{2}\arctan\frac{3y}{2-y^2}\right) dy\\
&-4\int_0^\infty \frac{(y^4+y^2+4)^{-\frac{1}{4}}}{\e^{2\pi y}-1}\cos\left(\frac{1}{2}\arctan\frac{3y}{2-y^2}\right) dy.
\end{align*}

\end{document}